\documentclass{amsart}

\usepackage{forest}
\usepackage{nicefrac}

\usetikzlibrary{shapes.geometric,decorations.markings}
\usetikzlibrary{decorations.pathreplacing}
\usetikzlibrary{fit}
\usetikzlibrary{automata}
\usetikzlibrary{positioning}

\usepackage[utf8]{inputenc}
\usepackage[T1]{fontenc}
\usepackage{textcomp}
\usepackage{amsfonts}
\usepackage{amsthm}
\usepackage{amssymb}
\usepackage{pst-node}
\usepackage[all]{xy}
\usepackage{mathtools}
\usepackage{chngcntr}
\usepackage{thmtools}
\usepackage{mathrsfs}
\usepackage{tcolorbox}
\usepackage{listings}
\usepackage{hyperref}
\usepackage{color}
\usepackage[usenames,dvipsnames]{xcolor}
\usepackage{float}
\usepackage{tikz, tikz-cd}
\usepackage{enumerate}
\usepackage[nameinlink, capitalize, noabbrev]{cleveref}
\usepackage{natbib}
\usepackage{caption}
\usetikzlibrary{intersections}
\usepackage{pgfplots}
\pgfplotsset{width=10cm,compat=1.9}
\usepgfplotslibrary{external}
\tikzset{mytext/.style={font=\small, text=black}}
\tikzset{main node/.style={circle,fill=lime!30,draw,minimum size=0.5cm,inner sep=0pt},}

\hypersetup{ colorlinks=true,linkcolor=teal,    citecolor=magenta, }

\usepackage{pst-node}
\usepackage{pst-tree}

\newcommand\Ball[1][black]{\BALL[#1]}
\def\BALL[#1](#2){\rput[t](#2){}%
        \pscircle[fillstyle=solid,fillcolor=#1!40](#2){5pt}}
        
\psset{treesep=1.3,levelsep=1.5}

\hypersetup{
    colorlinks=true,
    linkcolor=teal,
    citecolor=magenta,
    }

\numberwithin{equation}{section}

\newtheorem{Theorem}{Theorem}
\newtheorem{Conjecture}[Theorem]{Conjecture}
\newtheorem{Corollary}[Theorem]{Corollary}
\newtheorem{proposition}{Proposition}[section]
\newtheorem{lemma}[proposition]{Lemma}

\newtheorem{theorem}[proposition]{Theorem}

\theoremstyle{definition}
\newtheorem{remark}[proposition]{Remark}
\newtheorem{definition}[proposition]{Definition}


\def\NN{\mathbb N}

\def\PP{\mathbb P}

\DeclareMathOperator{\FPP}{FPP}

\DeclareMathOperator{\Gal}{Gal}
\DeclareMathOperator{\St}{St}
\DeclareMathOperator{\st}{st}
\DeclareMathOperator{\Aut}{Aut}

\DeclareMathOperator{\Sym}{Sym}


\def \<#1>{{\left\langle{#1}\right\rangle}}
\def\abs#1{\left\vert{#1}\right\vert}
\def\set#1{{\def\st{\;:\;}\left\{#1\right\}}}

\title{Fixed-point proportion of geometric iterated Galois groups}
\author{Jorge Fariña-Asategui and Santiago Radi}
\address{Jorge Fariña-Asategui: Centre for Mathematical Sciences, Lund University, 223 62 Lund, Sweden -- Department of Mathematics, University of the Basque Country UPV/EHU, 48080 Bilbao, Spain}
\email{jorge.farina\_asategui@math.lu.se}
\address{Santiago Radi: Department of Mathematics, Texas A\&M University, 77843 College Station, U.S.A.
}
\email{santiradi@tamu.edu}
\keywords{Fixed-point proportion, geometric iterated Galois groups, arboreal representations, mixing, measure-preserving dynamical systems, martingales, exceptional sets}
\subjclass[2020]{Primary: 37P05, 20E08, 37A50; Secondary: 60G42, 37F10, 37P35}
\thanks{The first author is supported by the Spanish Government, grant PID2020-117281GB-I00, partly with FEDER funds. The second author is supported by Grigorchuk's Simons Foundation Grant MP-TSM-00002045 and the department of Mathematics of Texas A\&M University.}

\begin{document}

\begin{abstract}

In 1980, Odoni initiated the study of the fixed-point proportion of iterated Galois groups of polynomials motivated by prime density problems in arithmetic dynamics.

The main goal of the present paper is to completely settle the longstanding open problem of computing the fixed-point proportion of geometric iterated Galois groups of polynomials. Indeed, we confirm the well-known conjecture that Chebyshev polynomials are the only complex polynomials whose geometric iterated Galois groups have positive fixed-point proportion.  Our proof relies on methods from group theory, ergodic theory, martingale theory and complex dynamics. This result has direct applications to the proportion of periodic points of polynomials over finite fields.

The general framework developed in this paper applies more generally to rational functions over arbitrary fields and generalizes, via a unified approach, previous partial results, which have all been proved with very different methods.
\end{abstract}

\maketitle

\section{Introduction}
\label{section: introduction}

Let $L$ be a separably closed field, $K$ a subfield of $L$ and $a$ an element of $L$. Fix $K(a)^{\mathrm{sep}}$ a separable closure of $K(a)$ in $L$. Let $f\in K(x)$ be a rational function of degree $d \geq 2$. We write
$$f^n := f\circ\overset{n}{\dotsb}\circ f$$
and denote $f^{-n}(a)$ the set of preimages of $a$ under $f^n$. Let us consider the rooted tree of preimages
$$T:=\bigsqcup_{n\ge 0}f^{-n}(a).$$
As $\mathrm{Gal}(K(a)^{\mathrm{sep}}/K(a))$ acts by permutations on the $n$th level $\mathcal{L}_n:=f^{-n}(a)$ of $T$ and this action commutes with $f$, we obtain the so-called \textit{arboreal representation}
$$\rho: \mathrm{Gal}(K(a)^{\mathrm{sep}}/K(a)) \rightarrow \mathrm{Aut}(T),$$ 
where $\mathrm{Aut}(T)$ is the group of automorphisms of $T$. The kernel of the arboreal representation $\rho$ is the subgroup 
$$\mathrm{Gal}\mathrm(K(a)^{\mathrm{sep}}/K_\infty(f,a)),$$
where $K_\infty(f,a)$ is the field obtained by adjoining the elements $\bigcup_{n \geq 0} f^{-n}(a)$ to $K$. Therefore, we may define the \textit{iterated Galois group of $f$ at $a$} as 
$$G_\infty(K,f,a) := \mathrm\rho(\mathrm{Gal}(K(a)^{\mathrm{sep}}/K(a)))\cong \mathrm{Gal}(K_\infty(f,a)/K(a)).$$

Arboreal representations are a dynamic analogue of Galois representations on Tate modules of elliptic curves. A major open problem in the area is obtaining an analogue of Serre's open image theorem \cite{Serre}, i.e. finding conditions on $f$ and $a$ to ensure that the iterated Galois group has finite index in $\mathrm{Aut}(T)$. For quadratic polynomials, there has been substantial progress in this direction. For instance, a remarkable result in this vein was proved by Gratton, Nguyen and Tucker in \cite{ABC}, where, assuming the ABC conjecture, they proved that for a monic quadratic polynomial $f \in \mathbb{Z}[x]$ with wandering critical point and such that $f^n$ is irreducible over $\mathbb{Q}$ for every $n\ge 1$, the iterated Galois group $G_\infty(\mathbb{Q},f,0)$ has finite index in $\mathrm{Aut}(T)$.

The iterated Galois group $G_\infty(K,f,a)$ is a closed subgroup of $\mathrm{Aut}(T)$. Let us write $\mu_G$ for the unique Haar probability measure in a closed subgroup $G\le \mathrm{Aut}(T)$. The \textit{fixed-point proportion} of $G$ is defined as $$\mathrm{FPP}(G) := \mu_G(\{g\in G : g \text{ fixes an end in }\partial T\}),$$
where $\partial T$ is the space of ends in $T$, i.e. infinite rooted paths in $T$. In other words, the fixed-point proportion of $G$ measures the proportion of elements that fix at least one vertex at every level of $T$.

In 1980, Odoni initiated the study of the fixed-point proportion of iterated Galois groups of polynomials. Indeed, he proved in \cite{Odoni1} that
$$G_\infty(\mathbb{Q},x^2-x+1,0)=\mathrm{Aut}(T)$$
and that the fixed-point proportion of $\mathrm{Aut}(T)$ is zero; see \cite[Corollary 2.6]{AbertVirag} for a proof via Galton-Watson processes. His results were motivated by a prime density problem in arithmetic dynamics. In fact, he used the equality 
$$\mathrm{FPP}(G_\infty(\mathbb{Q},x^2-x+1,0))=0$$ 
to prove that the Dirichlet density of the set of prime numbers dividing at least one non-zero term in the Sylvester sequence, namely the sequence $w_{n+1} = 1 + w_1 \cdots w_n$ with $w_1 = 2$, is zero \cite{Odoni1}. This approach of Odoni has been greatly generalized to general number fields and rational functions by Jones and Manes in \cite{JonesManes} as follows.

Let $K$ be a number field, $a_0\in K$ and a rational function $f\in K(x)$ of degree $d\ge 2$. Then, we write 
$$P_f(a_0) := \{\mathfrak{p} \text{ prime}: v_\mathfrak{p}(f^n(a_0)) > 0  \text{ for some } n \geq 0   \text{ with } f^n(a_0) \neq 0\},$$ 
i.e. $P_f(a_0)$ is the set of primes dividing some non-zero iterate $f^n(a_0)$. If $\mathcal{D}$ denotes the Dirichlet density, Jones and Manes proved in \cite[Theorem 6.1]{JonesManes} that
\begin{align*}
\mathcal{D}(P_f(a_0)) \leq \mathrm{FPP}(G_\infty(K,f,0)).
\end{align*}
In particular, if $G_\infty(K,f,0)$ has zero fixed-point proportion, then the Dirichlet density of $P_f(a_0)$ must also be zero. This is the case, for instance, if $G_\infty(K,f,0)$ has finite index in $\mathrm{Aut}(T)$ as in the cases considered by Odoni in \cite{Odoni1, Odoni2} and Jones and Manes in \cite{JonesManes}; see \cite[Lemma~2.10]{Radi2025}.

Unfortunately, there is a few well-known obstructions for $G_\infty(K,f,0)$ to have finite index in $\mathrm{Aut}(T)$, such as $f$ having intersecting critical orbits or $f$ being post-critically finite (i.e. the set of post-critical points $P_f:=\bigcup_{n\ge 1} f^n(C_f)$ of $f$ is finite, where $C_f$ is the set of critical points of $f$); see \cite[Conjecture 3.11]{JonesArboreal}. These obstruction are a dynamic analogue of complex multiplication in elliptic curves. Therefore, a more general approach to compute the fixed-point proportion of a group is needed.

As mentioned above, \cite[Lemma~2.10]{Radi2025} implies that in order to prove that $\mathrm{FPP}(G_\infty(K,f,a))=0$, it is enough to find a finite extension of $G_\infty(K,f,a)$ with zero fixed-point proportion. In general, if we consider $t$ a transcendental element over $K$, the group $G_\infty(K,f,t)$ is expected to be a finite extension of $G_\infty(K,f,a)$ if $a$ is not a strictly post-critical point for $f$. This is the so-called \textit{specialization problem}, which has been solved in several cases; compare \cite{BGJT,BDGHT, BDGHT2, BridyTucker, JonesManes, Odoni2} and the references therein. Therefore, it is natural to study the fixed-point proportion of the overgroup $G_\infty(K,f,t)$.

We call the group $G_\infty(K,f,t)$ the \textit{arithmetic iterated Galois group} or \textit{arithmetic iterated monodromy group.} It is well-known that  
\begin{align*}
G_\infty(K,f,t) \cong \rho_{\mathrm{IMG}}( \pi_1^{\text{ét}}(\mathbb{P}^1_K \setminus P_f, x_0)),
\end{align*}
where $\rho_{\mathrm{IMG}}$ is the arboreal representation obtained via the iterated monodromy action on the tree of preimages of a point $x_0 \in \mathbb{P}^1_K \setminus P_f$  of the étale fundamental group $\pi_1^{\text{ét}}(\mathbb{P}^1_K \setminus P_f, x_0)$.

If we consider $K^{\mathrm{sep}}$ the separable closure of $K$, by Galois theory we have the short exact sequence 
\begin{align*}
1 \rightarrow G_\infty(K^{\mathrm{sep}}, f, t) \rightarrow G_\infty(K, f, t) \rightarrow \mathrm{Gal}((K^{\mathrm{sep}} \cap K_\infty(f,t))/K) \rightarrow 1.
\end{align*}
Hence, if
$$K = K_\infty(f,t)\cap K^{\mathrm{sep}},$$ 
we obtain an isomorphism and we may compute the fixed-point proportion of the Galois group  $G_\infty(K^{\mathrm{sep}},f,t)$ instead. The group $G_\infty(K^{\mathrm{sep}},f,t)$ is known as the \textit{geometric iterated Galois group} or \textit{geometric iterated monodromy group} as it is defined over a separable closed field.

Note that the geometric iterated Galois group is invariant under field extensions of the base field. Thus, if $K$ is a number field, it may be computed over~$\mathbb{C}$. If $f$ is post-critically finite, the geometric iterated Galois group $G_\infty(\mathbb{C},f,t)$ is isomorphic to the closure of the discrete iterated monodromy group $\mathrm{IMG}(f)$ in $\mathrm{Aut}(T)$, where $\mathrm{IMG}(f)$  is defined as $\rho_{\mathrm{IMG}}(\pi_1(\mathbb{P}^1_\mathbb{C}\setminus P_f,x_0))$ for the discrete fundamental group $\pi_1(\mathbb{P}^1_\mathbb{C}\setminus P_f,x_0)$. Therefore, for a number field $K$, we have
$$\mathrm{FPP}(G_\infty(K^{\mathrm{sep}}, f, t))=\mathrm{FPP}(\mathrm{IMG}(f)),$$
bringing complex dynamics to bear upon the subject. In fact, dynamical properties of complex polynomials were used by Jones in \cite{JonesAMS} to obtain results on the fixed-point proportion of iterated monodromy groups.

Iterated monodromy groups have important applications to different areas of mathematics \cite{SelfSimilar}. They were used by Bartholdi and Nekrashevych in \cite{Thurston} to solve the well-known Hubbard's twisted rabbit problem in complex dynamics; see \cite{Douady} for further details. Moreover, they provide examples of groups of intermediate growth \cite{IMG-growth,GrigorchukMilnor}, and the first examples of amenable but not subexponentially amenable groups \cite{RW2,RW1}.

As shown by the second author in \cite{Radi2025}, the fixed-point proportion of the arithmetic iterated Galois group $G_\infty(K,f,t)$ depends greatly on the base field $K$. However, a complete classification of the fixed-point proportion of geometric iterated Galois groups might still be possible.

For $d\ge 2$, the \textit{$d$th Chebyshev (complex) polynomial $T_d$} is the unique polynomial $f\in \mathbb{Z}[x]$ such that
$$f\left(x+\frac{1}{x}\right)=x^d+\frac{1}{x^d}.$$

It was proved by Jones in \cite{JonesAMS} that the fixed-point proportion of the iterated monodromy group of a Chebyshev polynomial is positive and given by
\begin{align*}
\mathrm{FPP}(\mathrm{IMG}(\pm T_d)) = \left \{ \begin{matrix} 
1/2 & \mbox{if $d$ is odd,} \\ 
1/4 & \mbox{if $d$ is even.}
\end{matrix}\right.
\end{align*}

Since Odoni started the study of the fixed-point proportion in 1980, the only known examples of polynomials $f\in \mathbb{C}[x]$ for which 
$$\mathrm{FPP}(G_\infty(\mathbb{C},f,t))>0,$$
are those linearly conjugate to $\pm T_d$. It is widely believed that these are indeed the only such examples \cite{Bridy, Juul}:
\begin{Conjecture}
\label{Conjecture: chebyshev}
    For a complex polynomial $f\in \mathbb{C}[x]$, the following are equivalent:
    \begin{enumerate}[\normalfont(i)]
    \item $f$ is linearly conjugate to $\pm T_d$;
    \item $\mathrm{FPP}(G_\infty(\mathbb{C},f,t))>0$.
    \end{enumerate}
\end{Conjecture}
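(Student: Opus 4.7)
The implication (i)$\Rightarrow$(ii) is the already-known direction: a linear conjugacy $f=L\circ(\pm T_d)\circ L^{-1}$ induces via $L$ an isomorphism of trees of preimages that intertwines the monodromy representations, whence $\mathrm{FPP}(G_\infty(\mathbb{C},f,t))=\mathrm{FPP}(\mathrm{IMG}(\pm T_d))\in\{1/2,1/4\}$ by Jones's formula recalled above. The substantive content is therefore (ii)$\Rightarrow$(i).

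Assume $\mathrm{FPP}(G)>0$ for $G:=G_\infty(\mathbb{C},f,t)$. My first move is to recast $\mathrm{FPP}(G)$ as the survival probability of a level-by-level branching process: for a vertex $v\in T$ and a Haar-random $g\in G$ conditioned to fix $v$, let $X(v)$ be the number of children of $v$ also fixed by $g$; integrated against the uniform measure on $\partial T$, this yields an inhomogeneous Galton--Watson tree whose survival probability equals $\mathrm{FPP}(G)$, extending the framework of \cite{AbertVirag}. The laws $X(v)$ are completely determined by the local inertia of the arboreal action at vertices lying over the critical values of $f$, so they encode the dynamical information of $f$.

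To exploit positivity, I plan to build a measure-preserving dynamical system $(\partial T,\sigma,\mu)$, where $\sigma$ is the shift induced by $f$ on $\partial T$ and $\mu$ is the canonical measure obtained from Haar on $G$. Using equidistribution of preimages of $f$ together with mixing of $\sigma$, one expects that for generic $f$ the offspring distribution satisfies $\mathbb{E}[\log X]\le 0$ on a full-$\mu$-measure set of branches, so that the associated martingale converges to $0$ almost surely and the branching process dies out, contradicting $\mathrm{FPP}(G)>0$. Rigidity then follows by analyzing when this argument breaks down: non-extinction forces a very rigid relation between the post-critical orbit of $f$ and the tree of preimages---essentially, that $f$ is post-critically finite with a totally ramified, forward-invariant post-critical set---which by classical results in complex dynamics on polynomials with exceptional backward-orbit behavior singles out, up to linear conjugacy, the polynomials $z^d$ and $\pm T_d$.

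To conclude, one has to rule out the power maps: $\mathrm{IMG}(z^d)$ acts on $T$ as the $d$-adic odometer, and only the identity element fixes any branch, giving $\mathrm{FPP}(\mathrm{IMG}(z^d))=0$. This leaves precisely the Chebyshev case, as desired. The main obstacle I anticipate is quantifying the ergodic step above: making the ``martingale dies out'' criterion sharp enough that its failure lands inside the narrow list of exceptional polynomials, and handling non-post-critically-finite $f$ uniformly, for which $G$ is not literally the closure of $\mathrm{IMG}(f)$ and one must work with the geometric arboreal representation intrinsically inside $\mathrm{Aut}(T)$.
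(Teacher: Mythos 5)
Your high-level intuition is on the right track --- treat the fixed-point count level-by-level, exploit some ergodic/mixing phenomenon to force extinction, and then characterize the failure of that mechanism in terms of the complex dynamics of $f$ --- but the specific machinery you propose has genuine gaps, and the paper's actual route is quite different at each step.

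The main problem is the branching-process framing. You propose treating the fixed-point counts $X_n$ as an inhomogeneous Galton--Watson process and killing it with a criterion of the form $\mathbb{E}[\log X]\le 0$. This does not work for two reasons. First, for a general closed subgroup $G\le\mathrm{Aut}(T)$ the offspring numbers at sibling vertices are \emph{not} independent, so $X_n$ is not a Galton--Watson process; the Ab\'ert--Vir\'ag argument you cite works for $\mathrm{Aut}(T)$ precisely because the full iterated wreath product has independent branching. What survives for subgroups is only the weaker martingale property $\mathbb{E}[X_{n+1}\mid X_1,\dots,X_n]=X_n$ (recalled in \cref{proposition: martingale characterization}). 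Second, for a martingale the criterion $\mathbb{E}[\log X]\le 0$ is not the right extinction condition; a non-negative integer-valued martingale converges a.s., and the whole problem is to show that the only possible limit is $0$. This requires showing that, conditional on $X_n=r>0$, there is a uniform positive probability that $X_{n+m}\ne r$ at some later level $n+m$. That is exactly what the paper's pseudomixing estimate (\cref{proposition: mixing property}) supplies, and it is proved by a commutator trick (\cref{lemma: commutator trick}) inside $G$, not by any ergodic analysis of the shift on $\partial T$.

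Your proposed dynamical system is also different from the one that does the work: you suggest $(\partial T,\sigma,\mu)$ with the shift $\sigma$ induced by $f$ on the boundary, whereas the paper's system is $(G,\mu_G,T,\mathcal{T})$ on the \emph{group} itself, with the free monoid $T$ acting by section operators $g\mapsto g|_v$. The mixing that matters is mixing of this section dynamics, not equidistribution of $f$-preimages. Finally, the rigidity step --- ``non-extinction forces a totally ramified, forward-invariant post-critical set'' --- is the right flavor but not the right invariant. The actual obstruction is the critically exceptional set $\Upsilon_f$; the dichotomy is $\#\Upsilon_f\cap K\le 1$ (mixing, hence $\mathrm{FPP}=0$) versus $\#\Upsilon_f\cap K=2$, which via Riemann--Hurwitz (\cref{lemma: almost exceptional set}) and \cref{lemma: euclidean and exceptional} is equivalent to $f$ having euclidean orbifold of type $(2,2,\infty)$, i.e.\ being a (twisted) Chebyshev polynomial. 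Power maps $z^d$ are not a case to be separately excluded: they have $\#\Upsilon_f\cap K\le 1$ and fall into the mixing case (and, for what it is worth, $\mathrm{IMG}(z^d)$ does have elements other than the identity fixing an end; it simply has zero fixed-point proportion). To turn your sketch into a proof you would need (a) to abandon independent branching and work with the martingale plus a conditional non-degeneracy estimate, (b) the correct group-valued dynamical system and a mechanism like the commutator trick to prove its mixing, and (c) the exceptional-set/orbifold analysis to pin down exactly when mixing fails.
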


However, there has only been limited partial results on \cref{Conjecture: chebyshev} in the last decades; compare \cite{Bridy, JonesAMS, JonesComp, JonesLMS, Juul}. In fact, most results on the fixed-point proportion of polynomials have been obtained for post-critically infinite maps following Odoni's original approach \cite{JonesComp, JonesLMS,Odoni1,Odoni2}. To the best of our knowledge, for post-critically finite polynomials all the known results are those obtained by Jones in \cite{JonesAMS}.

In this paper, we develop a new approach to compute the fixed-point proportion, suitable for both post-critically infinite and post-critically finite maps. As a main application, we completely characterize the fixed-point proportion of geometric iterated Galois groups of polynomials over any separably closed field, in particular verifying \cref{Conjecture: chebyshev}. To state this characterization, we need some terminology from complex dynamics, which we adapt to arbitrary separably closed fields.

We define the \textit{Thurston orbifold} of a post-critically finite rational function $f\in K(x)$ as $(\mathbb{P}_K^1,\nu_f)$ for the map $\nu_f:\mathbb{P}_K^1\to \mathbb{N}\cup \{\infty\}$, where
$$\nu_f(z):=\mathrm{lcm}\{\nu_f(w)e_f(w) : w\in f^{-n}(z) \text{ for some }n\ge 1\},$$
unless $z$ belongs to a super-attracting periodic orbit, in which case $\nu_f(z)=\infty$. As $f$ is post-critically finite, there is only finitely many points where $\nu_f(z)\ge 2$. The finite tuple $(\nu_f(z))_{z\in P_f}$ is the \textit{type} of the orbifold. 

We define the \textit{Euler characteristic} $\chi$ of the orbifold $(\mathbb{P}_K^1,\nu_f)$ as
$$\chi(\nu_f):=2-\sum_{z\in P_f}\left(1-\frac{1}{\nu_f(z)}\right).$$
A rational function $f\in K(x)$ is said to be \textit{euclidean} if $\chi(\nu_f)=0$ and \textit{hyperbolic} if $\chi(\nu_f)<0$.

Our main result shows that polynomials over $K$ with euclidean orbifolds of type $(2,2,\infty)$ are precisely the only exceptions yielding positive fixed-point proportion:

\begin{Theorem}
    \label{Theorem: main IMG intro}
    Let $K$ be a field and $f\in K[x]$ a polynomial of degree $d \geq 2$, such that either $\mathrm{char}(K)=0$ or $\mathrm{char}(K)$ does not divide the local degree of any critical point of $f$. Then, for $t$ transcendental over $K$, either
    \begin{enumerate}[\normalfont(i)]
        \item $f$ has a euclidean orbifold of type $(2,2,\infty)$ and 
        $$\mathrm{FPP}(G_\infty(K^{\mathrm{sep}}, f, t))=\left \{ \begin{matrix} 
1/2 & \mbox{if $d$ is odd,} \\ 
1/4 & \mbox{if $d$ is even;}
\end{matrix}\right.$$
        \item or $\mathrm{FPP}(G_\infty(K^{\mathrm{sep}}, f, t))=0$ otherwise.
    \end{enumerate}
\end{Theorem}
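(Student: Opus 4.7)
My strategy is to reduce the computation of $\mathrm{FPP}(G)$, with $G := G_\infty(K^{\mathrm{sep}}, f, t)$, to an extinction question for a nonnegative integer-valued martingale indexed by the levels of $T$, and then to use the Thurston orbifold of $f$ as the input that decides extinction. I would first dispose of the Euclidean $(2,2,\infty)$ case: a polynomial $f \in K[x]$ carries such an orbifold precisely when it is linearly conjugate to $\pm T_d$, by the classical orbifold classification for polynomials — valid over $K^{\mathrm{sep}}$ under the hypothesis that $\mathrm{char}(K)$ does not divide any critical local degree. For such Chebyshev maps the iterated monodromy group admits a standard explicit recursive description, from which one reads off $\mathrm{FPP}(G) = 1/2$ when $d$ is odd and $1/4$ when $d$ is even, matching \cite{JonesAMS}.

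For every other $f$ the aim is to prove $\mathrm{FPP}(G) = 0$. Let $g$ be Haar-random in $G$ and put $N_n(g) := |\{v \in \mathcal{L}_n : g(v) = v\}|$. Since $G$ acts transitively on each level (as $f^n(x) - t$ is irreducible over $K^{\mathrm{sep}}(t)$), Burnside's lemma gives $E[N_n] = 1$, and the wreath-product self-similar structure of $G$ upgrades this to the martingale identity $E[N_{n+1} \mid \mathcal{F}_n] = N_n$, where $\mathcal{F}_n$ records the action of $g$ on level $n$. Martingale convergence yields $N_n \to N_\infty$ a.s.\ for some nonnegative $N_\infty$; because $N_n$ is integer-valued and extinction $\{N_n = 0\}$ is absorbing, the event ``$g$ fixes an end'' coincides with $\{N_\infty \geq 1\}$ up to null sets, so $\mathrm{FPP}(G) = P(N_\infty \geq 1)$ and it suffices to show $N_\infty = 0$ a.s.

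The next step feeds complex dynamics, via the Thurston orbifold $(\mathbb{P}_K^1, \nu_f)$, into this martingale through a mixing/ergodic estimate. The local offspring distribution of the process at a fixed vertex $v$ is governed by the action of the stabilizer of $v$ on its $d$ children, which under iterated monodromy is determined by the ramification profile of $f$ at the relevant preimage. Outside the Chebyshev family the corresponding branching is non-degenerate: hyperbolicity ($\chi(\nu_f) < 0$) forces the IMG to be contracting so that sibling fixed-point indicators decorrelate rapidly down the tree, while for post-critically infinite $f$ the growing post-critical set contributes fresh ramification at arbitrarily deep levels. In either regime, one can combine an $L^2$-second-moment bound with a Paley--Zygmund-type argument (or equivalently, a Galton--Watson comparison whose offspring has strictly positive variance) to conclude that the mixing forces $N_\infty = 0$ a.s., completing the proof.

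The main obstacle, in my view, is producing a \emph{uniform} extinction criterion covering both the post-critically finite and post-critically infinite regimes under one umbrella — historically these have been handled by completely disjoint methods (the explicit IMG recursion of Jones in the PCF case, the Odoni-style critical-orbit density in the PCI case). A single statement seems to require phrasing the mixing input as a covariance inequality on $T$ that can be read directly from orbifold data. A secondary obstacle is transporting the complex-dynamical orbifold machinery to arbitrary separably closed fields of positive characteristic, which is exactly where the hypothesis on $\mathrm{char}(K)$ versus local critical degrees enters, ensuring the ramification structure of $f$ over $K^{\mathrm{sep}}$ mirrors the characteristic-zero case.
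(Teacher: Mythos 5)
Your reduction to a martingale extinction problem and your treatment of the Chebyshev/euclidean case match the paper's broad outline: the paper also peels off the $(2,2,\infty)$ orbifold type (Lemma~\ref{lemma: euclidean and exceptional}, \cref{lemma: euclidean implies dihedral}, \cref{proposition: fpp of 22}), shows the fixed-point process is a martingale via the level-transitive monodromy at $\infty$ (\cref{lemma: IGG polynomials martingale}), and then argues that the process dies a.s.\ in the remaining cases. One minor caveat: over a separably closed field of positive characteristic the $(2,2,\infty)$ polynomials are the \emph{twisted} Chebyshev polynomials (\cref{proposition: twisted chebyshev}), not literally $\pm T_d$, but this does not change the $\mathrm{FPP}$ computation.

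The genuine gap is exactly where you flag it: you do not supply an extinction criterion that covers both the PCF and PCI regimes, and the mechanisms you gesture at would not produce one. First, an $L^2$/Paley--Zygmund bound points in the \emph{wrong} direction --- Paley--Zygmund lower-bounds $P(N_n>0)$, so it is a tool for proving $\mathrm{FPP}>0$, not $=0$. Second, ``contracting IMG $\Rightarrow$ sibling indicators decorrelate $\Rightarrow$ extinction'' is not an argument: contractingness is a statement about nuclei of sections of group elements and gives no a priori control of the joint distribution of fixed-point indicators at a level, and in any case the theorem must also cover all non-PCF polynomials, where contractingness is not available. Third, a Galton--Watson comparison only applies if the offspring distribution at different fixed vertices were i.i.d., which here it is not. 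What the paper actually does to close this gap is (i) introduce the purely combinatorial notion of a \emph{critically exceptional set} $\Upsilon_f$ and show by Riemann--Hurwitz that for polynomials $\#(\Upsilon_f\cap K)\le 2$, with equality iff the orbifold is $(2,2,\infty)$ (\cref{lemma: almost exceptional set}, \cref{lemma: euclidean and exceptional}); (ii) use the product relation $\prod g_p=1$ for the inertia generators and a \emph{commutator trick} (\cref{lemma: commutator trick}, \cref{theorem: good generators are seen}) to show that when $\#(\Upsilon_f\cap K)\le 1$ the group is \emph{mixing} in the precise sense of \cref{definition: mixing group}; and (iii) convert mixing into a quantitative \emph{pseudomixing} inequality on the Haar measure (\cref{proposition: mixing property}) which, plugged into Jones's martingale strategy (\cref{lemma: Jones strategy}), gives a uniform conditional lower bound $\mu_G(X_{n+N+m}>r\mid X_n=r)\ge\epsilon(r)>0$, forcing the process to leave any nonzero level infinitely often and hence to go extinct. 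The missing ingredient in your proposal is precisely this bridge from orbifold data to a group-theoretic mixing statement; without it, the ``show $N_\infty=0$ a.s.'' step has no engine.
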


Complex polynomials with a euclidean orbifold of type $(2,2,\infty)$ are, up to a sign and linear conjugation, the Chebyshev complex polynomials; see \cite{MilnorBook}. In general, complex rational functions with a euclidean orbifold were completely classified by Douady and Hubbard in \cite{Douady}. If $K$ is separably closed, polynomials with an euclidean orbifold of type $(2,2,\infty)$ are twisted Chebyshev polynomials; see \cref{proposition: twisted chebyshev}. 

The proof of \cref{Theorem: main IMG intro} involves a mix of techniques from group theory, ergodic theory, martingale theory and complex dynamics. Before explaining the method of proof more in detail, we discuss major direct applications of \cref{Theorem: main IMG intro}.

Let $K$ be a number field and $f\in K(x)$ a rational function of degree $d \geq 2$. Given $\mathfrak{p}$ a prime ideal of good reduction for $f$, denote by $f_\mathfrak{p}$ the reduction of the coefficients of $f$ modulo $\mathfrak{p}$ and $\mathbb{F}_\mathfrak{p}$ the residue field of $\mathfrak{p}$. As $\mathbb{F}_\mathfrak{p}$ is a finite field, every point in~$\mathbb{P}^1_{\mathbb{F}_\mathfrak{p}}$ is either periodic or strictly preperiodic for $f$. We write $\mathrm{Per}(f_\mathfrak{p}, \mathbb{P}^1_{\mathbb{F}_\mathfrak{p}})\subseteq \mathbb{P}^1_{\mathbb{F}_\mathfrak{p}}$ for the set of periodic points. It was shown by Juul, Kurlberg, Madhu and Tucker in \cite{Juul} that 
\begin{align}
\label{equation: FPP and reduction of maps}
\liminf_{N(\mathfrak{p}) \rightarrow \infty} \frac{\# \mathrm{Per}(f_\mathfrak{p}, \mathbb{P}^1_{\mathbb{F}_\mathfrak{p}})}{N(\mathfrak{p}) + 1} &\le \mathrm{FPP}(G_\infty(\mathbb{C},f,t)).
\end{align}

They asked in \cite[Question 1.4]{Juul} whether one can classify all the rational functions such that the lower limit in \cref{equation: FPP and reduction of maps} is non-zero. Chebyshev polynomials are known to yield a non-zero lower limit in \cref{equation: FPP and reduction of maps} if and only if they are of prime power degree; see \cite[Example 7.2]{Juul}. Thus, \cref{Theorem: main IMG intro} answers \cite[Question~1.4]{Juul} for all polynomials:

\begin{Corollary}
    \label{Corollary: periodic points 1}
    Let $K$ be a number field and $f\in K[x]$ a polynomial of degree $d\ge 2$ and write $T_d$ for the $d$th Chebyshev polynomial. Then, 
    $$\liminf_{N(\mathfrak{p}) \rightarrow \infty} \frac{\# \mathrm{Per}(f_\mathfrak{p}, \mathbb{P}^1_{\mathbb{F}_\mathfrak{p}})}{N(\mathfrak{p}) + 1}=0$$ if and only if $f$ is not linearly conjugate to $\pm T_d$ with $d$ a prime power.
\end{Corollary}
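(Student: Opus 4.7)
The plan is to split into two cases according to whether $f$ is linearly conjugate (over $\overline{K}$) to $\pm T_d$ for some $d\ge 2$. The non-Chebyshev case reduces immediately to \cref{Theorem: main IMG intro} via the inequality \cref{equation: FPP and reduction of maps} of Juul, Kurlberg, Madhu and Tucker. The Chebyshev case has already been analysed in \cite[Example 7.2]{Juul}, so the work reduces to lining up the two dichotomies.

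If $f$ is not linearly conjugate to any $\pm T_d$, then by the classification of polynomials with euclidean orbifold of type $(2,2,\infty)$ as (twisted) Chebyshev polynomials (as recalled after the statement of \cref{Theorem: main IMG intro}), the orbifold of $f$ is not of that type. \cref{Theorem: main IMG intro} then gives $\mathrm{FPP}(G_\infty(K^{\mathrm{sep}},f,t))=0$, and base-change invariance of the geometric iterated Galois group identifies this quantity with $\mathrm{FPP}(G_\infty(\mathbb{C},f,t))$. The desired vanishing of the liminf follows from \cref{equation: FPP and reduction of maps}.

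In the Chebyshev case, where $f$ is linearly conjugate to $\pm T_d$, the inequality \cref{equation: FPP and reduction of maps} is too weak, yielding only an upper bound of $1/2$ or $1/4$ and hence not discriminating between prime-power and non-prime-power $d$. I would instead invoke \cite[Example 7.2]{Juul} directly, which computes the liminf for $\pm T_d$ over finite fields and shows that it is positive if and only if $d$ is a prime power. The only technical point is to transfer the liminf through the linear conjugacy: since the conjugating affine map is defined over a finite extension of $K$, at all primes of good reduction outside a finite exceptional set it induces a bijection on $\mathbb{P}^1$ of the residue field, hence on periodic points, so the liminfs for $f$ and for $\pm T_d$ coincide.

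The proof is short once \cref{Theorem: main IMG intro} is available; the genuine obstacle of the whole project is that theorem itself, while the present corollary is essentially a repackaging of it together with the prime-power refinement already supplied by \cite{Juul}. The only routine bookkeeping needed is the bijectivity of the conjugacy on reductions at primes of large norm.
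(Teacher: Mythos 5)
Your proposal matches the paper's (very brief) treatment: the paper presents the corollary as an immediate consequence of \cref{Theorem: main IMG intro} together with the inequality \cref{equation: FPP and reduction of maps} of Juul--Kurlberg--Madhu--Tucker for the ``if'' direction, and simply cites \cite[Example 7.2]{Juul} for the ``only if'' (Chebyshev) direction, exactly as you do. The one place where you are slightly looser than you acknowledge is the conjugacy transfer: if the affine conjugacy $\ell$ is defined only over a proper finite extension $L/K$ and $\mathfrak{p}$ is not totally split in $L$, then $\ell$ reduces to a bijection of $\mathbb{P}^1_{\mathbb{F}_\mathfrak{P}}$ rather than of $\mathbb{P}^1_{\mathbb{F}_\mathfrak{p}}$, so one does not immediately get $\#\mathrm{Per}(f_\mathfrak{p},\mathbb{P}^1_{\mathbb{F}_\mathfrak{p}})=\#\mathrm{Per}((\pm T_d)_\mathfrak{p},\mathbb{P}^1_{\mathbb{F}_\mathfrak{p}})$; this needs either a Galois-descent/twist argument for Chebyshev polynomials or to be read off from the computation in \cite[Example 7.2]{Juul}. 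The paper itself silently elides this point, so your proposal is in line with what the authors present.
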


If $K = \mathbb{F}_q$ is a finite field of characteristic $p$, we may compute the geometric iterated Galois group over $\mathbb{F}_q^{\mathrm{sep}}$. Recently, Bridy, Jones, Kelsey and Lodge proved in \cite{Bridy} that if $\mathbb{F}_q$ is a finite field of characteristic $p$ and $f\in \mathbb{F}_q(x)$ is a rational function of degree $2 \leq d < p$, then  
\begin{align}
\label{equation: FPP and finite fields}
\liminf_{n \rightarrow \infty} \frac{\# \mathrm{Per}(f, \mathbb{P}^1_{\mathbb{F}_{q^n}})}{q^n +1 } \leq \mathrm{FPP}(G_\infty(\mathbb{F}_q^{\mathrm{sep}},f,t)).
\end{align}

For a polynomial $f$, the authors in \cite{Bridy} expect the lower limit in \cref{equation: FPP and finite fields} to be zero if $f$ is not linearly conjugate to a Chebyshev polynomial. By \cref{Theorem: main IMG intro}, 
see also \cref{proposition: twisted chebyshev}, we obtain:

\begin{Corollary}
    \label{Corollary: periodic points 2}
    Let $f\in \mathbb{F}_q[x]$ be a polynomial of degree $2\le d< p$, where $p$ is the characteristic of $\mathbb{F}_q$. If $f$ is not a twisted Chebyshev polynomial, then
    $$\liminf_{n \rightarrow \infty} \frac{\# \mathrm{Per}(f, \mathbb{P}^1_{\mathbb{F}_\mathfrak{p}})}{q^n + 1}=0.$$
\end{Corollary}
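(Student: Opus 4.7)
The plan is to combine the Bridy--Jones--Kelsey--Lodge inequality \eqref{equation: FPP and finite fields} with \cref{Theorem: main IMG intro}: since that inequality controls the liminf by $\mathrm{FPP}(G_\infty(\mathbb{F}_q^{\mathrm{sep}},f,t))$, it suffices to show that $\mathrm{FPP}(G_\infty(\mathbb{F}_q^{\mathrm{sep}},f,t))=0$ whenever $f$ is not a twisted Chebyshev polynomial.

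First, I would verify the characteristic hypothesis of \cref{Theorem: main IMG intro}. For a polynomial $f$ of degree $d$, every local degree of a critical point is bounded above by $d$, since by Riemann--Hurwitz $\sum_{c\in C_f}(e_f(c)-1) \le d-1$. As $d<p=\mathrm{char}(\mathbb{F}_q)$ by assumption, every such local degree is coprime to $p$, so \cref{Theorem: main IMG intro} applies to $f$ over $\mathbb{F}_q$.

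Next, I would rule out case (i) of \cref{Theorem: main IMG intro}. The classification in \cref{proposition: twisted chebyshev} identifies, over a separably closed field, the polynomials having a euclidean orbifold of type $(2,2,\infty)$ as exactly the twisted Chebyshev polynomials. Since this property is intrinsic to the dynamics and is preserved under the base change from $\mathbb{F}_q$ to $\mathbb{F}_q^{\mathrm{sep}}$, the hypothesis that $f$ is not a twisted Chebyshev polynomial rules out case (i). Hence case (ii) obtains, giving $\mathrm{FPP}(G_\infty(\mathbb{F}_q^{\mathrm{sep}},f,t))=0$, which when plugged into \eqref{equation: FPP and finite fields} completes the proof. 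No real obstacle arises in this argument beyond what is already packaged into \cref{Theorem: main IMG intro} and \cref{proposition: twisted chebyshev}; the corollary is a direct consequence of the main theorem, with the finite-field inequality of \cite{Bridy} doing the arithmetic bookkeeping.
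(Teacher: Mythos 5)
Your proposal is correct and follows exactly the route the paper intends: combine the Bridy--Jones--Kelsey--Lodge inequality~\eqref{equation: FPP and finite fields} with \cref{Theorem: main IMG intro}, using Riemann--Hurwitz and $d<p$ to verify the characteristic hypothesis, and \cref{proposition: twisted chebyshev} to rule out case~(i). The paper itself gives no separate argument for this corollary beyond citing \cref{Theorem: main IMG intro} and \cref{proposition: twisted chebyshev}, so your write-up simply spells out the same deduction in more detail.
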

 
\textcolor{teal}{Corollaries} \ref{Corollary: periodic points 1} and \ref{Corollary: periodic points 2} hold more generally for rational functions satisfying the assumptions of \cref{Theorem: mixing rational functions} below. 

Let us describe now the proof of \cref{Theorem: main IMG intro}. The \textit{fixed-point process} of the closed group $G\le \mathrm{Aut}(T)$ is the real stochastic process $\{X_n\}_{n\ge 1}$, where the random variables $X_n:(G,\mu_G)\to \mathbb{N}\cup\{ 0\}$ are given by
$$X_n(g):=\# \text{ vertices at level }n\text{ fixed by }g.$$
The martingale strategy of Jones \cite{JonesAMS, JonesComp} consists on first showing that the fixed-point process $\{X_n\}_{n\ge 1}$ is a martingale and then showing that $\lim_{n\to \infty}X_n(g) = 0$ almost surely. It is this second step for which there is not a general approach. The main novelty in the proof of \cref{Theorem: main IMG intro} is using ergodic theory to establish this second step. This ergodic theory for self-similar groups was initiated by
the first author in \cite{JorgeCyclicity} and it has already found multiple applications in the study of random elements of self-similar groups \cite{JorgeCyclicity, MarkovProcesses, JorgeSantiFPP, RandomSubgroups}.

If $T$ is the $d$-regular rooted tree for some $d\ge 2$, then $T$ may be identified with the free monoid of rank $d$. Then, for each vertex $v\in T$, we may define the operator $\mathcal{T}_v:\mathrm{Aut}(T)\to \mathrm{Aut}(T)$ given by
$$\mathcal{T}_v(g):=g|_v,$$
where $g|_v$ denotes the \textit{section of $g$ at $v$}, i.e. the restriction of $g$ to the subtree rooted at $v$.

A group $G\le \mathrm{Aut}(T)$ is said to be \textit{self-similar} if $g|_v\in G$ for every $g\in G$ and $v\in T$ and \textit{fractal} if $G$ is self-similar, acts transitively on every level of $T$ and the map $\varphi_v:\mathrm{st}_G(v)\to G$ given by $g\mapsto g|_v$ is surjective.

If $G$ is fractal and closed, it was shown by the first author in \cite[Theorem~A]{JorgeCyclicity} that $\mathcal{T}_v$ preserves the Haar measure $\mu_G$ in $G$ for every $v\in T$ and we obtain a measure-preserving dynamical system $(G,\mu_G,T,\mathcal{T})$, where $\mathcal{T}$ denotes the monoid action of $T$ on $G$ given by the operators $\mathcal{T}_v$.

As a first step towards \cref{Theorem: main IMG intro}, we introduce a new subfamily of fractal groups that we call \textit{mixing} groups; see \cref{definition: mixing group} for the precise definition of a mixing group. The term mixing is motivated by the mixing property satisfied by the associated measure-preserving dynamical system; see \cref{proposition: mixing property}. Using this mixing property, we provide our main tool to compute the fixed-point proportion of a self-similar group:

\begin{Theorem}
    \label{Theorem: main result FPP intro}
    Let $G\le \mathrm{Aut}(T)$ be a mixing group. Then $\mathrm{FPP}(G)=0$.
\end{Theorem}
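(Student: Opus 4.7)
The plan is to follow Jones's martingale strategy, with the mixing hypothesis supplying the mechanism that forces the martingale limit to vanish. Consider the \emph{fixed-point process} $(X_n)_{n\ge 0}$ on the probability space $(G,\mu_G)$, where $X_n(g)$ counts the vertices at level $n$ of $T$ fixed by $g$, and let $\mathcal{F}_n$ denote the $\sigma$-algebra generated by the action of $g$ on the first $n$ levels of $T$. My first step is to verify that $(X_n)$ is a non-negative integer-valued martingale with respect to $(\mathcal{F}_n)$. This is where the fractal property of $G$ enters: conditioned on $g$ fixing a vertex $v$ at level $n$, the section $g|_v$ is Haar-distributed on $G$, and since $G$ acts transitively on the $d$ children of $v$, Burnside's lemma gives that the expected number of these children fixed by $g|_v$ equals $1$. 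Summing over level-$n$ vertices yields $\mathbb{E}[X_{n+1}\mid \mathcal{F}_n]=X_n$, so in particular $\mathbb{E}[X_n]=1$ for every $n\ge 0$.

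By Doob's martingale convergence theorem, $X_n$ converges $\mu_G$-almost surely to an integrable limit $X_\infty$ with $\mathbb{E}[X_\infty]\le 1$. Since $X_n$ takes values in $\mathbb{Z}_{\ge 0}$, the sequence $X_n(g)$ is eventually constant for almost every $g$. By König's lemma the set of elements of $G$ fixing some end of $T$ equals $\bigcap_{n\ge 0}\{X_n\ge 1\}\subseteq \{X_\infty\ge 1\}$, and therefore
$$\mathrm{FPP}(G)\ \le\ \mu_G(\{X_\infty\ge 1\}).$$
It suffices to show that $X_\infty=0$ almost surely.

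This is where the mixing hypothesis is essential and where the main obstacle lies. Heuristically, $(X_n)$ behaves like a critical Galton--Watson branching process: the number of children of a fixed vertex that remain fixed has mean $1$ and a non-degenerate distribution, so classical branching theory would force extinction if the subtree processes were genuinely independent. They are not, but the mixing property $\mu_G(A\cap \mathcal{T}_v^{-1}(B))\to\mu_G(A)\mu_G(B)$ as $|v|\to\infty$ promised by \cref{proposition: mixing property} supplies exactly the required asymptotic decoupling between events occurring near the root and events determined by a deep section $g|_v$. The concrete plan is to bound the second moment
$$\mathbb{E}[X_n^2]=\sum_{v,w}\mu_G(\{g:g\text{ fixes both }v\text{ and }w\}),$$
split the double sum according to the level of the common ancestor $v\wedge w$, and use the fractal property at $v\wedge w$ together with mixing across its descendant subtrees to factorise the two-point correlations asymptotically. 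Combined with a zero-one law for the tail-type event $\{X_\infty\ge 1\}$ afforded by mixing, this rules out $\mathbb{P}(X_\infty\ge 1)>0$ and finishes the proof.
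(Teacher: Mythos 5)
Your martingale step is close to right but attributes it to the wrong hypothesis. For a fractal group, conditioned only on the event that $g$ fixes a vertex $v$ at level $n$, the section $g|_v$ is indeed Haar-distributed (this is the measure-preservation result, \cref{theorem: fractal is dyn sys}). But for the martingale property you must condition on the full $\sigma$-algebra $\mathcal{F}_n$, i.e. on the coset $g\,\mathrm{St}_G(n)$. Under this conditioning, $g|_v$ is Haar-distributed not on all of $G$ but on a coset of $\mathrm{St}_G(n)_v$, and Burnside's lemma then gives expected number of fixed children equal to $1$ only if $\mathrm{St}_G(n)_v$ acts transitively on the children of $v$ — which is precisely the level-transitivity of stabilizers, i.e.\ condition (i) in the definition of a mixing group, not fractality. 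This is exactly the content of \cref{proposition: martingale characterization}; fractality alone does not suffice.

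The genuine gap is in the final step, where you propose to show $X_\infty=0$ a.s.\ via a second-moment bound together with a ``zero-one law afforded by mixing.'' Two problems. First, you quote the mixing property as the unconditional statement $\mu_G(A\cap\mathcal{T}_v^{-1}(B))\to\mu_G(A)\mu_G(B)$, but \cref{proposition: mixing property} establishes only a \emph{conditional} pseudomixing property: the factorisation holds after conditioning on $\mathcal{T}_u^{-1}(\mathrm{st}_G(w))$, i.e.\ on the deep section fixing the right sub-vertex. For a general mixing group the unconditional statement need not hold (it is the super strongly fractal case where it does), and the whole point of the definition in this paper is that the conditional version is exactly what the fixed-point problem requires. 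Second, neither the second-moment computation nor the zero-one law is actually carried out, and it is unclear they would close the argument: bounding $\mathbb{E}[X_n^2]$ would give uniform integrability and hence $\mathbb{E}[X_\infty]=1$, which by itself does not force $X_\infty=0$, and $\{X_\infty\ge 1\}$ is not a tail event in any $\sigma$-algebra for which the pseudomixing property obviously yields a zero-one law. The paper instead invokes Jones's martingale strategy (\cref{lemma: Jones strategy}): since the process is integer-valued and converges a.s., it suffices to show that $\mu_G(X_{n+m}=r\mid X_n=r)\le 1-\epsilon(r)$ for infinitely many $n$, and the pseudomixing property is used precisely to produce, with probability bounded below by a constant depending only on $r$, an extra fixed vertex below one of the $r$ already-fixed vertices at level $n$. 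Your sketch would need a concrete replacement for that step to count as a proof.
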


\cref{Theorem: main result FPP intro} suggests that the ergodic theory initiated in \cite{JorgeCyclicity} is the right approach to study the fixed-point proportion of self-similar groups. \cref{Theorem: main result FPP intro} greatly generalizes a result of the authors in \cite[Theorem 1]{JorgeSantiFPP} for super strongly fractal groups.

In order to prove \cref{Theorem: main IMG intro}, we shall establish a dichotomy for the geometric iterated Galois group of a polynomial $f$: either $f$ has an euclidean orbifold of type $(2,2,\infty)$ or the geometric iterated Galois group $G_\infty(K^{\mathrm{sep}},f,t)$ is mixing; see \cref{lemma: mixing polynomial}. Thus, this ergodic approach singles out precisely the exceptional cases with positive fixed-point proportion. To prove this dichotomy, we generalize the notion of exceptional sets in complex dynamics.

Let $f\in \mathbb{C}(x)$ be a complex rational function. In \cite{MakarovSmirnov}, Makarov and Smirnov defined a set $\Sigma\subseteq P_f$ to be \textit{exceptional} if
$$\Sigma := f^{-1}(\Sigma) \setminus C_f.$$ 
Here, we consider a rational function $f\in K(x)$ and define  $\Upsilon\subseteq P_f$ to be \textit{critically exceptional} if
$$\Upsilon := f^{-1}(\Upsilon) \setminus ((C_f\cup P_f)\setminus \Upsilon).$$ 

Riemann-Hurwitz Theorem imposes strong constraints on the maximal size of a critically exceptional set; see \cref{lemma: almost exceptional set}. We write $\Upsilon_f$ for the maximal critically exceptional set for $f$. For a polynomial $f\in K[x]$, either $\#\Upsilon_f\le 1$ or $f$ has a euclidean orbifold of type $(2,2,\infty)$; see \cref{lemma: euclidean and exceptional}. If $f$ is a polynomial, the monodromy at infinity yields a level-transitive element and the fixed-point process of $G_\infty(K^{\mathrm{sep}},f,t)$ is a martingale; see \cref{proposition: martingale characterization}. Then, the desired dichotomy follows from the following stronger result:

\begin{Theorem}
\label{Theorem: mixing rational functions}
Let $K$ be a field and $f\in K(x)$ a rational function of degree $d \geq 2$, such that either $\mathrm{char}(K)=0$ is zero or $\mathrm{char}(K)$ does not divide the local degree of any critical point of $f$. Let us consider $t$ transcendental over $K$. Assume further that:
\begin{enumerate}[\normalfont(i)]
    \item the fixed-point process $\{X_n\}_{n\ge 1}$ of $G_\infty(K^{\mathrm{sep}},f,t)$ is a martingale;
    \item we have $\# \Upsilon_f\le 1$.
\end{enumerate}
 Then, the group  $G_\infty(K^{\mathrm{sep}},f,t)$ is mixing. 
\end{Theorem}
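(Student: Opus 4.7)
The plan is to verify the conditions of \cref{definition: mixing group} for $G := G_\infty(K^{\mathrm{sep}},f,t)$ by combining the iterated monodromy description of $G$ with the rigidity forced by the assumption $\# \Upsilon_f \le 1$. Since $G$ is a closed, fractal subgroup of $\mathrm{Aut}(T)$ acting level-transitively under hypothesis (i), the dynamical system $(G,\mu_G,T,\mathcal{T})$ of \cite{JorgeCyclicity} is well defined and the sections $\mathcal{T}_v$ preserve the Haar measure $\mu_G$; what remains is the additional decorrelation condition separating mixing from mere fractality.

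The key geometric input is that the generators of $G$ arise as lifts of small loops around the points of $P_f$, and the section $g|_v$ of such a generator at a preimage vertex $v \in f^{-n}(t)$ is either trivial or itself a further generator, according as the iterated image $f^n(v)$ avoids or meets the corresponding branch point. The hypothesis that $\mathrm{char}(K)$ does not divide the local degree at any critical point of $f$ ensures tame ramification, so this topological picture transfers from the characteristic-zero setting to arbitrary separably closed fields satisfying our assumption. Next, I would turn $\# \Upsilon_f \le 1$ into a quantitative density statement: any $p \in P_f \setminus \Upsilon_f$ must admit a preimage outside $C_f \cup P_f$, since otherwise $P_f \setminus \Upsilon_f$ would itself be critically exceptional, contradicting the maximality of $\Upsilon_f$. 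Iterating this observation, for each such $p$ one finds that most branches of the tree of backward orbits land in the ``generic'' region $\mathbb{P}^1 \setminus (C_f \cup P_f)$, where every monodromy generator restricts to a trivial section. A Riemann--Hurwitz count then bounds the number of level-$n$ preimages of $P_f$ trapped in $C_f \cup P_f$ by a constant depending only on $\#(C_f \cup P_f)$, so the proportion of generic vertices at level $n$ tends to $1$, and at each such vertex the section map $\varphi_v : \mathrm{st}_G(v) \to G$ realizes sections asymptotically independent of the action of $g$ on fixed finite levels.

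The hardest part will be handling the potentially non-empty singleton $\Upsilon_f$, whose entire backward orbit remains inside $C_f \cup P_f$ and therefore contributes a thin ``exceptional tower'' of vertices at which the genericity argument above fails. Showing that this tower is $\mu_G$-negligible in the quantitative sense required by \cref{definition: mixing group} will demand a careful Riemann--Hurwitz bookkeeping of ramification indices along the tower, and hypothesis (i) must be invoked to smear any residual localized obstruction via the level-transitive element, thereby establishing the required decorrelation uniformly across the level-$n$ fibres of $T$ and completing the verification that $G$ is mixing.
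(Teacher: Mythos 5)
Your proposal diverges from the paper's proof in a way that, as written, leaves a genuine gap. The central difficulty is that \cref{definition: mixing group} is a \emph{uniform} statement: for every $n,m$ and for \emph{every} vertex $v$ with $\abs{v}\ge n+N$ one needs $\mathrm{St}_G(n)_v^m=\pi_m(G)$. Your plan is to show instead that ``the proportion of generic vertices at level $n$ tends to $1$'' and that a residual exceptional tower is ``$\mu_G$-negligible.'' A proportion-tends-to-one or measure-negligibility statement does not supply the uniform equality required at every single deep vertex, and there is no measure on the vertex set appearing in the definition to make ``negligible'' mean anything; the non-generic vertices (however few) would still have to be handled. The actual mechanism in the paper is algebraic, not measure-theoretic: the commutator trick (\cref{lemma: commutator trick}) shows, for each generator $s=g_p$ with $p\in P_f\setminus\Upsilon_f$, that $s\in\mathrm{St}_G(n)_v$ for \emph{all} $v$ with $\abs{v}\ge n+4$, using an element $g$ (conjugate to some inertia generator) that fixes two shallow vertices with section $1$ at one and section $s$ at the other; and the last possibly-missing generator $g_p$, $p\in\Upsilon_f$, is recovered from the closed-subgroup relation $\prod_{q\in P_f} g_q=1$. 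Neither of these two ingredients — the commutator trick and the product relation — appears in your proposal, and without them the verification of $\mathrm{St}_G(n)_v^m=\pi_m(G)$ does not go through.

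There is also a concrete false step in your reduction. You assert that every $p\in P_f\setminus\Upsilon_f$ must admit a preimage outside $C_f\cup P_f$, ``since otherwise $P_f\setminus\Upsilon_f$ would itself be critically exceptional.'' This is not correct: a critically exceptional set must be forward-invariant, and $P_f\setminus\Upsilon_f$ need not be, since a point of $P_f\setminus\Upsilon_f$ can map into $\Upsilon_f$. More to the point, the conclusion itself is false in general: the relevant characterization is that $p\in\Upsilon_f$ if and only if $f^n(p)\in\Delta_f$ for all $n\ge 0$, where $\Delta_f$ is the set of post-critical points all of whose preimages lie in $C_f\cup P_f$. Thus $p\in P_f\setminus\Upsilon_f$ only guarantees that \emph{some} iterate $f^n(p)$ (with $n\ge 0$) falls outside $\Delta_f$, and since $\#\Delta_f\le 4$ (\cref{lemma: almost exceptional set}) this happens within at most $4$ steps — but $p$ itself may perfectly well lie in $\Delta_f\setminus\Upsilon_f$. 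This bounded-delay phenomenon is exactly what produces the delay constant $N=4$ in \cref{theorem: good generators are seen}, and it is where hypothesis (i) is actually used: the martingale condition gives level-transitivity of $\mathrm{St}_G(n)$ on subtrees, which is a hypothesis of the commutator trick, not a device for ``smearing'' a measure-zero obstruction. To repair the proposal you would essentially need to replace the density heuristic by the commutator-trick argument and replace the false preimage claim by the case analysis on whether $f^n(p)\in\Delta_f$ for all $n\ge 1$.
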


The martingale condition in (i) is known to hold for all polynomials and for rational functions in many cases: when the degree $d$ is prime, or $f$ has double transitive monodromy for instance; see \cite[Corolario 5.3]{Bridy}. However, it is also known that not all rational functions satisfy the martingale condition; see \cite{NonMartingale} for an example.

We expect that the approach in \cref{Theorem: mixing rational functions} may be used to further classify all rational functions for which the fixed-point proportion of their geometric iterated Galois groups is positive. In fact, we believe that $G_\infty(K,f,t)$ is always mixing for hyperbolic rational functions. As powering maps also yield mixing groups, the exceptional cases should be those with a euclidean orbifold distinct from $(\infty,\infty)$:
\begin{Conjecture}
\label{Conjecture: rational}
    Let $K$ be a field and $f\in K[x]$ a polynomial of degree $d \geq 2$, such that either $\mathrm{char}(K)=0$ or $\mathrm{char}(K)$ does not divide the local degree of any critical point of $f$. Then, for $t$ transcendental over $K$, either
    \begin{enumerate}[\normalfont(i)]
        \item $f$ has a euclidean orbifold not of type $(\infty,\infty)$ and 
        $$\mathrm{FPP}(G_\infty(K^{\mathrm{sep}}, f, t))>0$$
        \item or $\mathrm{FPP}(G_\infty(K^{\mathrm{sep}}, f, t))=0$ otherwise.
    \end{enumerate}
\end{Conjecture}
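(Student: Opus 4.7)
My plan is to verify each clause of the definition of a mixing group directly for $G := G_\infty(K^{\mathrm{sep}},f,t)$, decomposing the task into: (a) structural closedness/fractality properties of $G$, (b) level transitivity, coming from hypothesis (i), and (c) the asymptotic independence/mixing property across branches, which is where hypothesis (ii) does the real work.

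\textbf{Step 1 (Closedness, fractality).} First I would check that $G$ is a closed, self-similar and fractal subgroup of $\mathrm{Aut}(T)$. Closedness is automatic, since $G$ is the continuous image of a profinite Galois group. Self-similarity follows from the recursive structure of the preimage tree: for any $v \in \mathcal{L}_n$, the subtree rooted at $v$ is canonically the tree of $f$-preimages of the point $v \in f^{-n}(t)$, and the restriction of $\mathrm{Gal}(K^{\mathrm{sep}}(t)^{\mathrm{sep}}/K^{\mathrm{sep}}(t))$ to this subtree factors through the arboreal representation attached to the pair $(f, v)$. Surjectivity of each section map $\varphi_v : \mathrm{st}_G(v) \to G$ is the key fractality input: one translates it into the linear disjointness of $K^{\mathrm{sep}}_\infty(f,v)$ and $K^{\mathrm{sep}}_\infty(f,t)$ over $K^{\mathrm{sep}}(v)$, which follows from the transcendence of $t$ together with a standard specialization argument for monodromy groups (this is the reason one works with the transcendental $t$ rather than a specialization).

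\textbf{Step 2 (Level transitivity and the martingale hypothesis).} Level transitivity of $G$ on every $\mathcal{L}_n$ is equivalent to the irreducibility of $f^n(x) - t$ over $K^{\mathrm{sep}}(t)$. I would invoke the martingale hypothesis (i) exactly at this point: the martingale property $\mathbb{E}[X_{n+1} \mid \mathcal{F}_n] = X_n$, when unpacked through the fractal recursion, forces the section behavior above each vertex to be balanced across the $d$ children, which together with the self-similarity of Step~1 yields level transitivity. (In the polynomial case one gets this for free from the inertia generator at infinity, which gives a $d$-cycle on each level; for rational $f$ the martingale hypothesis is the replacement.) Once level transitivity is in hand, $\mathcal{T}_v$ is $\mu_G$-preserving for every $v$ by \cite{JorgeCyclicity}, so that the measure-preserving dynamical system $(G,\mu_G,T,\mathcal{T})$ is well-defined.

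\textbf{Step 3 (Mixing via the critically exceptional hypothesis).} This is the main obstacle and where I would put the bulk of the work. The mixing property asks that the sections of a random $g \in G$ at distant vertices decorrelate, which translates into saying that, as $n\to \infty$, the image of the ramified cover $f^n$ over a transcendental basepoint eventually ``sees'' every branch of the critical-orbit structure with full monodromy. If a vertex $v$ and all its forward orbit under $f$ remained trapped in a set $\Upsilon \subseteq P_f$ with $\Upsilon = f^{-1}(\Upsilon) \setminus ((C_f \cup P_f)\setminus \Upsilon)$, then the sections over $v$ would be forced to respect this rigid subtree, obstructing mixing. Hypothesis (ii), $\#\Upsilon_f \le 1$, rules this out: together with the Riemann--Hurwitz constraint of the almost-exceptional-set lemma, it guarantees that the forward orbit of almost every branch of $T$ eventually exits $\Upsilon_f$ and meets a vertex of $P_f$ whose monodromy acts fully transitively on its fiber. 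Iterating this observation along the tree and combining it with the Haar-preserving property of $\mathcal{T}_v$, I would deduce, for any two cylinder events $A, B$ determined at levels $n$ and $n+m$ respectively,
\[
\mu_G(A \cap \mathcal{T}_v^{-1}(B)) \;\longrightarrow\; \mu_G(A)\,\mu_G(B)
\]
as $m \to \infty$, uniformly in $v$ at depth~$n$. This is the content of the mixing property invoked in \cref{proposition: mixing property}.

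\textbf{Main obstacle.} The hard part is the third step: converting the combinatorial statement $\#\Upsilon_f \le 1$ into a genuine decorrelation estimate. The subtle point is that escape from $\Upsilon_f$ happens at different depths for different branches, and one must control the contribution of ``slow escapers'' uniformly. I would handle this via a Borel--Cantelli-style argument applied to the branching process governed by the ramification data of $f$, using the martingale structure from (i) to ensure that the expected escape time from $P_f \setminus \Upsilon_f$ is finite along $\mu_G$-almost every branch of $T$.
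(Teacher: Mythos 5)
There is a fundamental mismatch between what you set out to prove and what the statement says. The statement is \cref{Conjecture: rational}, whose only hypotheses are the degree and characteristic conditions; yet your Steps 2 and 3 invoke a ``martingale hypothesis (i)'' and a ``hypothesis (ii) $\#\Upsilon_f\le 1$'' that appear nowhere in the statement --- those are the hypotheses of \cref{Theorem: mixing rational functions}, a different result. To prove the dichotomy as stated you would have to \emph{derive} these two facts from the orbifold data: for polynomials the paper gets the martingale/level-transitivity condition from the inertia generator at infinity (\cref{lemma: IGG polynomials martingale}), and the link between ``euclidean of type $(2,2,\infty)$'' and the size of $\Upsilon_f$ goes through \cref{lemma: almost exceptional set} and \cref{lemma: euclidean and exceptional}. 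Your proposal never makes this connection, and it also omits half of the dichotomy entirely: clause (i) asserts \emph{positivity} of the fixed-point proportion (computed in the paper via the dense dihedral subgroup, \cref{lemma: euclidean implies dihedral} and \cref{proposition: fpp of 22}), which ``mixing implies $\mathrm{FPP}=0$'' cannot give you. Note also that in the paper this statement is an open conjecture in its intended generality (rational functions); only the polynomial specialization is settled, as \cref{Theorem: main IMG intro}.

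The core of your Step 3 would not go through as described. The mixing property of \cref{definition: mixing group} is an exact algebraic statement --- $\mathrm{St}_G(n)_v^m=\pi_m(G)$ for \emph{every} $v$ with $|v|\ge n+N$, with a uniform delay $N$ --- and the decorrelation identity of \cref{proposition: mixing property} is an exact product formula derived from it, not a limit. A Borel--Cantelli argument about $\mu_G$-almost every branch escaping $\Upsilon_f$ in finite expected time yields at best an almost-sure, non-uniform statement and gives no mechanism for producing the surjectivity of the projection maps on level stabilizers. The paper's actual mechanism is deterministic: \cref{proposition: action inertia generators} shows each inertia generator $g_p$ has a trivial section at a vertex over a non-post-critical preimage and a section generating $\overline{\langle g_q\rangle}$ over a post-critical one; Riemann--Hurwitz (\cref{lemma: almost exceptional set}) forces the forward orbit of any $p\in P_f\setminus\Upsilon_f$ to exit $\Delta_f$ within $4$ steps; and the commutator trick (\cref{lemma: commutator trick}) then plants each such generator inside $\mathrm{St}_G(n)_v$ with delay $N=4$, the missing generator being recovered from the relation $\prod_p g_p=1$. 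Without an argument of this shape, your Step 3 is a restatement of the goal rather than a proof of it.
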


By the classification of Douady and Hubbard of complex rational functions in \cite{Douady}, in characteristic zero these are precisely Chebyshev polynomials and Lattès maps. This coincides with the widespread belief that Lattès maps are the other only complex rational functions with positive fixed-point proportion \cite{Bridy}. In fact, certain Lattès map are already known to have positive fixed-point proportion; see \cite[Theorem~2.5]{Bridy}.

\cref{Conjecture: rational} is further supported by \cref{proposition: level where one sees every pattern}, as we expect mixing to be equivalent to containing a proper projection-invariant open subgroup; see \cref{section: groups acting on trees} for the definition of projection-invariant subgroups. The rational functions whose geometric iterated Galois groups contain proper projection-invariant open subgroups have been classified by Adams in \cite{Ophelia}. They correspond precisely to quotients of strict dynamical Galois pullbacks. We plan to address the case of rational functions in a future paper.

\subsection*{\textit{\textmd{Organization}}}  

First, we introduce the new class of mixing groups in \cref{section: groups acting on trees} and we relate them to projection-invariant subgroups and the work of Adams in \cite{Ophelia}. We further develop the commutator trick in \cref{lemma: commutator trick}, as our main tool to check that a given group is mixing. In \cref{section: Dynamics}, we revise the ergodic theory for self-similar groups developed by the first author in \cite{JorgeCyclicity} and prove the key mixing property. In \cref{section: FPP}, we recall the martingale strategy of Jones and show how to use the mixing property proved in \cref{section: Dynamics} to prove \cref{Theorem: main result FPP intro}. \cref{section: Geometric iterated Galois groups of rational functions} is devoted to proving \cref{Theorem: mixing rational functions}. We introduce the key notion of critically exceptional sets and show how to use them in order to apply the commutator trick to geometric iterated Galois groups. Lastly, in \cref{section: topological polynomials} we specialize to polynomials and prove \cref{Theorem: main IMG intro}.

\subsection*{\textit{\textmd{Notation}}} Exponential notation will be used for group actions on $T$ and on its space of ends $\partial T$. Note that we consider right action on $T$ and $\partial T$.

\subsection*{Acknowledgements} 

We would like to thank Rafe Jones and Thomas J. Tucker for very helpful discussions on the fixed-point proportion of geometric iterated Galois groups. Furthermore, we would like to thank Volodymyr Nekrashevych for very helpful discussions on iterated monodromy groups. The first author would like to thank his advisors Gustavo A. Fernández-Alcober and Anitha Thillaisundaram for their constant support, while the second author thanks his advisor Rostislav Grigorchuk for his support and feedback.

\section{Groups acting on trees}
\label{section: groups acting on trees}

We first introduce the notions on self-similar groups needed in the rest of the paper. Next, we introduce the new class of mixing groups and provide the main tool to check this mixing property. Lastly, we show the relation between the mixing property and projection-invariant open subgroups of fractal groups.

\subsection{Self-similar and fractal groups}

A \textit{$d$-regular rooted tree} is a pair $(\mathscr{T}, \emptyset)$, where $\mathscr{T}$ is a tree and $\emptyset$ is a vertex in $\mathscr{T}$ such that $\deg(v) = d+1$ for every vertex $v \neq \emptyset$ and $\deg(\emptyset) = d$. The vertex $\emptyset$ is called the \textit{root} of $\mathscr{T}$.  

Given a vertex $v$, the \textit{norm} of $v$, denoted $\abs{v}$, is the distance in the tree between~$v$ and $\emptyset$. For any $n \geq 1$, the \textit{$n$th level} of the tree $\mathscr{T}$ is the set 
$$\mathcal{L}_n := \set{v \in \mathscr{T}: \abs{v} = n}.$$ 
If $1\le m \le \infty$, the \textit{$m$th truncated tree} $\mathscr{T}^m$ is the induced subtree 
$$\mathscr{T}^m:= \set{v \in \mathscr{T}: \abs{v} \le m}$$
Note that $\mathscr{T}^\infty = \mathscr{T}$.

Let $\mathrm{Aut}(\mathscr{T})$ be the group of graph automorphisms of the $d$-regular rooted tree $\mathscr{T}$ acting on $\mathscr{T}$ on the right. As the root has different degree than the other vertices, necessarily $(\emptyset)^g = \emptyset$ for every $g \in \Aut(\mathscr{T})$. Then, by preservation of adjacency, we have actions $\Aut(\mathscr{T}) \curvearrowright \mathcal{L}_n$ and $\Aut(\mathscr{T}) \curvearrowright \mathscr{T}^m$ for every $n \geq 1$ and $1 \leq m \leq \infty$. This latter action induces a surjective group homomorphism $$\pi_m: \Aut(\mathscr{T}) \rightarrow \Aut(\mathscr{T}^m).$$

Given a vertex $v \in \mathscr{T}$, we write $\mathrm{st}(v)$ for the \textit{stabilizer of the vertex $v$}. Given a natural number $n\ge 1$, we define the \textit{stabilizer of level $n$} as the pointwise stabilizer of the vertices at level $n$, i.e. $\mathrm{St}(n) = \bigcap_{v \in \mathcal{L}_n} \mathrm{st}(v)$. Clearly $\mathrm{St}(n)$ is a normal subgroup of finite index in $\mathrm{Aut}(\mathscr{T})$. The \textit{congruence topology} in $\mathrm{Aut}(\mathscr{T})$ is the profinite topology generated by the level stabilizers $\{\mathrm{St}(n)\}_{n \geq 1}$. 

If $G$ is a subgroup of $\Aut(\mathscr{T})$, then we obtain an action $G \curvearrowright \mathcal{L}_n$ for every $n \geq 1$. We say that $G$ is \textit{level-transitive} if the action $G \curvearrowright \mathcal{L}_n$ is transitive for all $n \geq 1$. We also define vertex stabilizers and level stabilizers in $G$ by restricting the ones of $\mathrm{Aut(\mathscr{T})}$, namely, $\st_G(v) := G \cap \st(v)$ and $\St_G(n) := \St(n) \cap G$ for all $v \in \mathscr{T}$ and $n \geq 1$.

Let $X := \set{1, \dots, d}$ and let $X^\NN$ be the free monoid on $X$. Denote $\emptyset_X$ the empty word in $X^\NN$. From $X^\NN$, we can construct a $d$-regular rooted tree $(T, \emptyset_X)$ as follows:
\begin{enumerate}[\normalfont(i)]
\item the vertices of $T$ are the elements in $X^\NN$;
\item if $v = x_1 \dots x_n$ is a word of length $n$ and $w = y_{1} \dots y_{n}y_{n+1}$ is a word of length $n+1$, then $v$ and $w$ are adjacent if and only if $x_i = y_i$ for all $1\le i \le n$. 
\end{enumerate}
In the remainder of the paper, $T$ will denote the tree obtained from the monoid~$X^\NN$ as above.

Let $v \in T$. The subtree $T_v$ corresponds to the subtree of $T$ whose vertices are the vertices of the form $v w$ with $w \in X^\NN$. Note that $(T_v, v)$ is also a $d$-regular rooted tree isomorphic to $T$ via the shift map 
\begin{align*}
\sigma_v: T_v &\rightarrow T \\
vw & \mapsto w.
\end{align*}

If $g \in \Aut(T)$ and $v \in T$, the restriction of $g$ to $T_v$ is a bijection $g|_{T_v}: T_v \rightarrow T_{v^g}$. Using the shift maps $\sigma_v$ and $\sigma_{v^g}$ we obtain the map $$g|_v := \sigma_v^{-1} g|_{T_x} \sigma_{v^g} \in \Aut(T),$$ called the \textit{section of $g$ at $v$}. Note that the section $g|_v$ is the unique element of $\mathrm{Aut}(\mathscr{T})$ such that
$$(vw)^g=v^gw^{g|_v}$$
for every $w\in T$.

If $1 \leq m \leq \infty$, we set $g|_v^m := \pi_m(g|_v) \in \Aut(T^m)$. The following properties are immediate to check: 
\begin{align*}
(gh)|_v^m = g|_v^m \cdot h|_{v^g}^m,\quad (g^{-1})|_v^m = (g|_{v^{g^{-1}}}^m)^{-1}\quad  \text{and} \quad g|_{uv}^m = (g|_u)|_v^m.
\end{align*}

As $T^1 = X$, we have $\Aut(T^1) = \Sym(X)$. We have an isomorphism of groups 
\begin{align*}
\psi: \Aut(T) & \rightarrow \left(  \Aut(T) \times \overset{d}{\ldots} \times \Aut(T) \right) \rtimes \Sym(X) \\
g & \mapsto (g|_1, \dots, g|_d) \pi_1(g).
\end{align*}
By an abuse of notation, we write $g = (g|_1 \dots, g|_d) \pi_1(g)$, without writing $\psi$ explicitly, to describe elements in $\Aut(T)$. 

Let $G$ be a subgroup of $\Aut(T)$. We say that $G$ is \textit{self-similar} if $g|_v \in G$ for all $v \in T$. Note that $G$ is self-similar if and only if $\psi(G)$ is a subgroup of $\left( G \times \overset{d}{\ldots} \times G \right) \rtimes \Sym(X)$. If $G$ is self-similar, we may define the map $\varphi_v^m$ as 
\begin{align*}
\varphi_v^m: \mathrm{st}_G(v) &\rightarrow \pi_m(G) \\
g &\mapsto g|_v^m,
\end{align*}
which is a well-defined group homomorphism for any $1\le m\le\infty$. We define the \textit{projection $G_v^m$ of~$G$ at $v$ of depth $m$} as
$$G_v^m:=\varphi_v^m(\mathrm{st}_G(v)).$$
When $m=\infty$, we shall drop the superscript and write simply $\varphi_v$ and $G_v$.

We say that a self-similar group $G \leq \Aut(T)$ is \textit{fractal} if $G$ is level-transitive and for all $v\in T$ we have
$$G_v=G.$$
Similarly, we say that a self-similar group $G \leq \Aut(T)$ is \textit{super strongly fractal} if~$G$ is level-transitive and for all $n \geq 1$ and $v\in \mathcal{L}_n$ we have $$\mathrm{St}_G(n)_v=G.$$
As $\St_G(n) \leq G$, super strongly fractal implies fractal.

\begin{remark}
\label{remark: fractal groups property}
Note that if $G$ is fractal, given $g \in G$ and vertices $v,w \in \mathcal{L}_n$, there exists $\widetilde{g} \in G$ such that $v^{\widetilde{g}}=w$ and $\widetilde{g}|_v = g$. This property will be used many times throughout the paper. 
\end{remark}

If $(\mathscr{T}, \emptyset)$ is a $d$-regular rooted tree, then an isomorphism of trees $\phi: \mathscr{T} \rightarrow T$ such that $\phi(\emptyset) = \emptyset_X$ is called a \textit{labeling} of $\mathscr{T}$. A labeling $\phi$ induces a group isomorphism $\rho_\phi: \Aut(\mathscr{T}) \rightarrow \Aut(T)$ given by 
$$v^{\rho_\phi(g)} := \phi(\phi^{-1}(v)^g)$$ 
for any $v \in T$.

If $G$ is a subgroup of $\Aut(\mathscr{T})$, we define self-similarity of $G$ in terms of the labeling $\phi$. Concretely, if $\phi: \mathscr{T} \rightarrow T$ is a labeling, we say that the pair $(G, \phi)$ is \textit{self-similar} if the group $\rho_\phi(G)$ is self-similar. Analogously, a pair $(G, \phi)$ is (super-strongly) fractal if $\rho_\phi(G)$ is (super-strongly) fractal.

The non-standard distinction between an arbitrary $d$-regular rooted tree $\mathscr{T}$ and the concrete $d$-regular rooted tree $T$ is motivated by \cref{section: Geometric iterated Galois groups of rational functions}. Indeed, geometric iterated Galois groups act naturally on the $d$-regular rooted tree $T_t$ of preimages of a transcendental element $t$. However, we shall obtain from this natural action a fractal action on $T$, which will allow us to apply the results on measure-preserving dynamical systems to these groups.

\subsection{Mixing groups}

Let us introduce the main class of groups in this paper:

\begin{definition}[Mixing group]
\label{definition: mixing group}
    We say that a fractal group $G\le \mathrm{Aut}(T)$ is \textit{mixing} if
    \begin{enumerate}[\normalfont(i)]
        \item for every $n\ge 1$ the level-stabilizer $\mathrm{St}_G(n)$ acts level-transitively on the subtrees rooted at level $n$;
        \item for every $n,m\ge 1$, there exists $N:=N(m)\ge 0$ such that for any $v$ with $|v|\ge n+N$ we have
        $$\mathrm{St}_G(n)_v^m=\pi_m(G).$$
    \end{enumerate}
    
We say $N:=N(m)$ is the \textit{delay constant} of $G$.
\end{definition}

Mixing groups generalize the notion of super strongly fractal groups. Indeed, if~$G$ is super strongly fractal, then its level-stabilizers satisfy condition (i) (see \cite[Lemma 3.4]{JorgeSantiFPP}) and we have
$$\mathrm{St}_G(n)_v^m=\pi_m(G)$$
for every $m\ge 1$ and every $v$ with $|v|\ge n$.

Our main tool to show that a given group is mixing is the following commutator trick:

\begin{lemma}[Commutator trick]
Let $G\le \mathrm{Aut}(T)$ be a fractal group such that for every $n\ge 1$, the level-stabilizer $\mathrm{St}_G(n)$ acts level-transitively on the subtrees rooted at level $n$. Let us consider an element $s\in G$ and assume that there exists a natural number $N\ge 1$ and element $g\in G$ satisfying that: 
\begin{enumerate}[\normalfont(i)]
\item there are vertices $u,w\in T$ with $\abs{u} \leq \abs{w} \leq N$ fixed by $g$;
\item we have $g|_u=1$ and $g|_w=s$.
\end{enumerate}
Then, for any $n\ge1$ and any $v\in T$ such that $|v|\ge n+N$ we have
$$s\in \mathrm{St}_G(n)_v.$$
In particular, if the above holds for any element $s$ in a generating set of $G$, then $G$ is mixing with delay constant $N$.
\label{lemma: commutator trick}
\end{lemma}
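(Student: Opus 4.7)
The plan is to construct, for each $v$ with $|v|\ge n+N$, an element of $\mathrm{St}_G(n)$ fixing $v$ whose section at $v$ recovers $s$, by means of a carefully engineered commutator. A preliminary reduction allows us to assume $|u|=|w|=:\ell\le N$: if $|u|<|w|$, replace $u$ by any descendant $u'=u\cdot 1^{|w|-|u|}$ at level $|w|$. The hypothesis $g|_u=1$ forces $g$ to act trivially on the entire subtree rooted at $u$, so $u'$ is still fixed by $g$ and $g|_{u'}=1$.

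Now fix $v$, set $p$ to be the prefix of $v$ of length $|v|-\ell\ge n$, write $v=pw'$ with $|w'|=\ell$, and denote by $v_0$ the level-$n$ prefix of $v$. By fractality the surjection $\varphi_p\colon\mathrm{st}_G(p)\to G$ supplies some $H\in\mathrm{st}_G(p)$ with $H|_p=g$; composition of sections then shows that $H$ fixes $pu,pw$ and that $H|_{pu}=1$, $H|_{pw}=s$. Since $pu,v\in T_{v_0}$ sit at the same level, the subtree-level-transitivity hypothesis on $\mathrm{St}_G(n)$ furnishes some $\tau\in\mathrm{St}_G(n)$ with $(pu)^\tau=v$. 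Because $\mathrm{St}_G(n)\trianglelefteq G$, the commutator $[\tau,H]=\tau^{-1}H^{-1}\tau H$ lies in $\mathrm{St}_G(n)$. Unfolding with the section formula $(ab)|_x=a|_x\cdot b|_{x^a}$, together with $H|_{pu}=1$, $(pu)^\tau=v$ and $\tau^{-1}|_v=(\tau|_{pu})^{-1}$, the $\tau$-contributions cancel and one obtains
\[
[\tau,H]|_v \;=\; (\tau|_{pu})^{-1}\cdot 1\cdot \tau|_{pu}\cdot g|_{w'} \;=\; g|_{w'},
\]
while $[\tau,H]$ fixes $v$ exactly when $g$ fixes $w'$.

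When $v=pw$ (so $w'=w$), this immediately produces $s\in\mathrm{St}_G(n)_v$. For arbitrary $v=pw'$, the subtree-level-transitivity of $\mathrm{St}_G(n)$ on $T_{v_0}$ provides some $\sigma\in\mathrm{St}_G(n)$ with $(pw)^\sigma=v$; conjugation by $\sigma$ carries the element of $\mathrm{St}_G(n)$ realizing $s$ at $pw$ to one fixing $v$ and realizing $s$ (up to conjugation by $\sigma|_{pw}$) there. This is sufficient for the "in particular" clause: if the hypothesis holds for every $s$ in a generating set $S$ of $G$, then the subgroup $\mathrm{St}_G(n)_v\le G$ contains every $s\in S$ up to $G$-conjugation, hence it contains the normal closure of $S$, which is all of $G$. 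Therefore $\mathrm{St}_G(n)_v^m=\pi_m(G)$ for every $m\ge 1$, and combined with assumption~(i) in the definition this gives the mixing property with delay constant $N$. The main obstacle is arranging the cancellation in the commutator computation: it is precisely the triviality $H|_{pu}=1$ that annihilates all $\tau$-dependent prefactors, so the hypothesis $g|_u=1$ is indispensable — without it, the residual section at $v$ would carry $\tau$-dependent factors that cannot be universally pinned down to $s$.
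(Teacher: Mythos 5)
Your argument follows the paper's strategy almost step for step and gets most of the way there, but there is a genuine gap at the final relocation step, and the patch you offer for it is incorrect.

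Up through the construction of the commutator everything is fine: you reduce to $|u|=|w|$, lift $g$ to $H\in\mathrm{st}_G(p)$ via fractality, choose $\tau\in\mathrm{St}_G(n)$ with $(pu)^\tau=pw$, and verify that $[\tau,H]\in\mathrm{St}_G(n)$ fixes $pw$ with section $s$ there. This matches the paper's proof. The problem is the transport from $pw$ to an arbitrary $v=pw'$ at the same level. You pick $\sigma\in\mathrm{St}_G(n)$ with $(pw)^\sigma=v$, but level-transitivity gives you no control over $\sigma|_{pw}$, so what lands in $\mathrm{St}_G(n)_v$ is $(\sigma|_{pw})^{-1}\,s\,(\sigma|_{pw})$, only a conjugate of $s$. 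The lemma, however, asserts $s\in\mathrm{St}_G(n)_v$ itself, which you have therefore not proved; and the argument you offer to make the weaker conclusion suffice --- that a subgroup containing some $G$-conjugate of each generator must contain the normal closure of the generating set --- is simply false (take $G=S_3$, $S=\lbrace(1\,2),(1\,3)\rbrace$, $H=\langle(1\,2)\rangle$: then $H$ contains a conjugate of each element of $S$ yet $H\neq G$).

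There are two clean ways to close the gap. The paper's route is to relocate with a conjugator coming from fractality rather than from $\mathrm{St}_G(n)$: by \cref{remark: fractal groups property} there is $k\in G$ with $(pw)^k=v$ and $k|_{pw}=1$, and since $\mathrm{St}_G(n)\trianglelefteq G$ the element $k^{-1}[\tau,H]k$ still lies in $\mathrm{St}_G(n)$, fixes $v$, and has section $(k|_{pw})^{-1}\,s\,(k|_{pw})=s$ at $v$. Alternatively, one can observe that $\mathrm{St}_G(n)_v$ is already normal in $G$: it equals $\varphi_v(\mathrm{St}_G(n)\cap\mathrm{st}_G(v))$, the subgroup $\mathrm{St}_G(n)\cap\mathrm{st}_G(v)$ is normal in $\mathrm{st}_G(v)$, and $\varphi_v$ is surjective onto $G$ by fractality, so the image is normal in $G$; a normal subgroup containing a conjugate of $s$ contains $s$. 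Either observation repairs your proof, but as written this step is missing.
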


\begin{proof}
Let $n \geq 1$ and $v \in T$ such that $\abs{v} \geq n + N$. Note that as $\abs{v} - \abs{w} \geq n$, we can find $w'$ in $T$ such that $\abs{v} = \abs{w' w}$. Since $G$ is fractal, there exists $\widetilde{g} \in \st_G(w')$ such that $$\widetilde{g}|_{w'} = g.$$ Note also that, as all the sections of $1$ are $1$, we may assume that $\abs{u} = \abs{w}$.

By assumption $\St_G(\abs{w'})$ acts level-transitively on the subtree rooted at $w'$. Thus, there exists $h \in \St_G(\abs{w'})$ such that $$(w'u)^h = w'w.$$ 
As $\St_G(\abs{w'})$ is normal in $G$ and $h \in \St_G(\abs{w'})$, we get
$$[h, \widetilde{g}] \in \St_G(\abs{w'}).$$
Moreover, $[h,\widetilde{g}] \in \st_G(w'w)$ and using the properties of sections, 
\begin{align*}
[h,\widetilde{g}]|_{w'w} = (h|_{w'u})^{-1} \cdot (\widetilde{g}|_{w'u})^{-1} \cdot h|_{w'u} \cdot \widetilde{g}|_{w'w} = g|_w =s.
\end{align*}

Lastly, by \cref{remark: fractal groups property}, we can conjugate $[h, \widetilde{g}]$ by an element $k$ in $G$ such that $(w'w)^k = v$ and $k|_{w'w} = 1$. Then 
\begin{equation*}
s \in \St_G(\abs{w'})_v \leq \St_G(n)_v.
\end{equation*}
as $\abs{w'} \geq n$.
\end{proof}

 As we shall see in \cref{section: Geometric iterated Galois groups of rational functions}, \cref{lemma: commutator trick} works surprisingly well for geometric iterated Galois groups.

\subsection{Connection to projection-invariant subgroups}

We say that a subgroup $K\le \mathrm{Aut}(T)$ is \textit{projection-invariant} if $K_v\le K$ for any $v\in T$. Note that it is enough to check the containment $K_v\le K$ with $v\in \mathcal{L}_1$.

\begin{remark}
A word of caution here. The notion of being projection-invariant and self-similar are not equivalent. Let us consider the group $G\le \mathrm{Aut}(T)$, generated by the involutions $a$ and $b$, which are given by the wreath recursions
$$a=(1,1)(1\,2)\quad \text{and}\quad b:=(a,b).$$
Then, the subgroup $H:=\langle ba\rangle$ is projection-invariant but not self-similar. Indeed
$$ba=(a,b)(1\,2)\quad \text{and}\quad(ba)^2=(ab,ba)=((ba)^{-1},ba).$$
Thus $\mathrm{st}_{H}(v)=H$ for any $v\in \mathcal{L}_1$ and $H$ is projection-invariant, but the two sections of $ba$ at the first level are $a$ and $b$ respectively, so $H$ is not self-similar. 
\end{remark}

Self-similar subgroups of both discrete and profinite self-similar groups have been studied by the first author in \cite{JorgeCyclicity}. Projection-invariant subgroups have appeared (under the name of self-similar subgroups!) recently in the work of Adams in \cite{Ophelia} in the context of geometric iterated Galois groups. Adams showed in \cite[Theorem~17]{Ophelia} that projection-invariant normal open subgroups of geometric iterated Galois groups correspond to Galois dynamical pullbacks.  We now establish an explicit connection between the mixing property and projection-invariant open subgroups, which connects our \cref{Theorem: main IMG intro} to Adams results in \cite[Theorem 17]{Ophelia}.

Let $G\le\mathrm{Aut}(T)$, and let us define the subgroup $K_G\le \mathrm{Aut}(T)$ as
$$K_G:=\bigcap_{v\in \gamma} \mathrm{St}_G(|v|)_v,$$
where $\gamma\in \partial T$, i.e. an infinite rooted path in $T$.

If $G$ is fractal, the subgroup  $K_G$ does not depend on the chosen path $\gamma$ by \cref{remark: fractal groups property} and both~$K_G$ and all the projections $(K_G)_v$ are normal in $G$ as $\varphi_v$ is surjective.

We say that $G$ is \textit{virtually super strongly fractal} if $G$ is fractal, $K_G$ is of finite index in $G$ and the level-stabilizer $\mathrm{St}_G(n)$ acts level-transitively on the subtrees rooted at level $n$ for every $n\ge 1$. Note that if $G$ is super strongly fractal, then $K_G=G$, which motivates the term virtually super strongly fractal. 

If $G$ is closed, then $K_G$ is always closed. Let us assume in the following that~$G$ is closed in $\mathrm{Aut}(T)$.

If $G$ is virtually super strongly fractal, then we obtain a characterization of the mixing property in terms of projection-invariant open subgroups:

\begin{proposition}
\label{proposition: level where one sees every pattern}
Let $G\le \mathrm{Aut}(T)$ be a virtually super strongly fractal closed subgroup. Then, the following are equivalent:
\begin{enumerate}[\normalfont(i)]
\item $G$ is mixing;
\item $G$ does not have any proper projection-invariant open subgroup.
\end{enumerate}
\end{proposition}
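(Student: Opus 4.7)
The plan is to prove the two directions separately; the forward one is short, while the backward one requires the virtually super strongly fractal hypothesis essentially.

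For $(i) \Rightarrow (ii)$, I would argue by contrapositive. Suppose $H \le G$ is a proper projection-invariant open subgroup. Since $G$ is profinite, $H$ contains $\mathrm{St}_G(n_0)$ for some $n_0 \ge 1$, so $H = \pi_m^{-1}(\pi_m(H))$ with $\pi_m(H) \subsetneq \pi_m(G)$ for every $m \ge n_0$. For any vertex $v$ with $|v| \ge n_0 + N(m)$, where $N(m)$ is the delay constant from the mixing assumption, $\mathrm{St}_G(n_0)_v^m = \pi_m(G)$, and since $\mathrm{St}_G(n_0) \le H$ we get $H_v^m = \pi_m(G)$. Projection-invariance then gives $H_v^m \le \pi_m(H) \subsetneq \pi_m(G)$, a contradiction.

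For $(ii) \Rightarrow (i)$, I would exploit the projection-invariant closure of $K_G$,
$$\widehat{K}_G := \langle (K_G)_v : v \in T \rangle \le G.$$
The identity $((K_G)_u)_v = (K_G)_{uv}$ together with the section-of-product formula $(gh)|_v = g|_v \cdot h|_{v^g}$ shows that $\widehat{K}_G$ is projection-invariant. It is open, since $K_G$ is a closed finite-index subgroup of the profinite group $G$ by the virtually super strongly fractal hypothesis. Hypothesis (ii) therefore forces $\widehat{K}_G = G$, so for every $m \ge 1$ the finite group $\pi_m(G)$ is generated by finitely many $\pi_m((K_G)_{u_1}), \ldots, \pi_m((K_G)_{u_t})$ with $M_m := \max_i |u_i|$. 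I would then claim that $N(m) := M_m$ (up to a small additive constant) serves as the delay constant for mixing: given $(n, v)$ with $|v| \ge n + N(m)$, one writes $v = v'v''$ with $|v'| = n$ and uses $\mathrm{St}_G(n)_v = H(n,n)_{v''} \supseteq (K_G)_{v''}$, then invokes \cref{lemma: commutator trick} to transfer each generator $\pi_m((K_G)_{u_i})$ into $\mathrm{St}_G(n)_v^m$. Each such generator lifts to a section $k|_{u_i}$ with $k \in K_G$, realized as the section $h|_{v_n u_i}$ of an element $h \in \mathrm{St}_G(n)$ chosen with $h|_{v_n} = k$ at some level-$n$ vertex $v_n$.

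The hard part will be producing, for each generator, the element $g \in G$ with the pair of sections $g|_u = 1$ and $g|_w = k|_{u_i}$ required by the commutator trick. This is exactly where hypothesis (ii) enters: without it one could not guarantee the existence of such $g$, since any obstruction would manifest as a proper projection-invariant open subgroup. I expect that carefully combining the identity $\widehat{K}_G = G$ with the level-transitivity of $\mathrm{St}_G(n)$ on the subtrees rooted at level $n$ will yield the required element, but this is the step that will require the most care.
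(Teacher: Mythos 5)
Your proof of (i)$\Rightarrow$(ii) is correct and essentially the same as the paper's: take a proper projection-invariant open $H \supseteq \mathrm{St}_G(n_0)$, use the mixing condition to locate a vertex $v$ with $H_v^m = \pi_m(G)$, and contradict projection-invariance. That part is fine.

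The converse (ii)$\Rightarrow$(i) is where the two arguments diverge, and your version has a genuine gap. You define $\widehat{K}_G := \langle (K_G)_v : v \in T\rangle$ and assert that the identity $((K_G)_u)_v = (K_G)_{uv}$ together with $(gh)|_v = g|_v\cdot h|_{v^g}$ shows $\widehat{K}_G$ is projection-invariant. But that identity only controls sections of a \emph{single} $(K_G)_u$ at a vertex it \emph{fixes}. If $g = g_1\cdots g_n$ with $g_j \in (K_G)_{u_j}$ fixes $v$, then $g|_v = g_1|_{v}\, g_2|_{v^{g_1}}\cdots$, and the individual factors $g_j$ need not fix the vertices $v^{g_1\cdots g_{j-1}}$ at which they are being sectioned. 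Such a section $g_j|_{w}$ with $w^{g_j} \ne w$ lies in $G$ by self-similarity of $G$, but there is no reason it lies in any $(K_G)_{u'}$, so there is no reason the product lies in $\widehat{K}_G$. You could instead take $\widehat{K}_G$ to be the smallest projection-invariant subgroup containing $K_G$ (an intersection of projection-invariant groups is projection-invariant, and $G$ itself is projection-invariant since it is fractal), which would give projection-invariance and openness for free — but then you lose the explicit generating description $\langle (K_G)_v\rangle$ that your subsequent step depends on. So either way, the step $\widehat{K}_G = G$ does not deliver the input you need.

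The second gap is the one you flag yourself: even granting $\pi_m(G) = \langle \pi_m((K_G)_{u_1}),\dots,\pi_m((K_G)_{u_t})\rangle$, you do not construct the elements with the section pattern $g|_u = 1$, $g|_w = s$ required by the commutator trick, and you observe that this is "exactly where hypothesis (ii) enters" and "will require the most care." In this proposition that step \emph{is} the proof of the converse; leaving it open means the converse is not proved. The paper takes an entirely different route that avoids both issues: starting from $K_0 := K_G$, it repeatedly uses hypothesis (ii) in the form "no proper open subgroup is projection-invariant" to extract an element $h \in (K_i)_y\setminus K_i$, and after a conjugation defines $K_{i+1} := g^{-1}\langle K_i, h\rangle g$. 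By induction one maintains $K_i \le (K_G)_{ux^i}$ for all $u$ with $|u|\ge k$ (using the normality of the projections $(K_G)_v$ and the identity $((K_G)_v)_x = (K_G)_{vx}$), while the indices $|G:K_i|$ strictly decrease. After finitely many steps $K_r = G$ and hence $G \le (K_G)_{ux^r}$, giving the mixing delay constant $N = k+r$ directly, with no appeal to the commutator trick at all. You should look at this chain argument; it is the mechanism that converts hypothesis (ii) into the uniform bound that mixing requires.
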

\begin{proof}
First, we prove (i)$\implies$(ii) (note that we do not use virtual super strong fractality for this implication). Let $K\le G$ be any projection-invariant open subgroup. Then $K\ge \mathrm{St}_G(m)$ for some $m\ge 1$. Since $G$ is mixing, there exists some vertex $v\in T$, such that
$$K_v^m=\pi_m(G).$$
As $K$ is projection invariant, this implies that $\pi_m(K)=\pi_m(G)$. As $K$ is open and $K\ge \mathrm{St}_G(m)$, we further deduce that $K=G$.

We now prove (ii)$\implies$(i). By assumption, the subgroup $K_G$ is open since it is closed and of finite index in $G$. Thus, it contains a level-stabilizer $\mathrm{St}_G(k)$ for some $k \geq 1$. Fix $x \in X$. We claim that there exists an $r\ge 0$ and a collection of subgroups $K_G:=K_0,K_1,\dotsc,K_r=G$ with the following properties:
\begin{enumerate}[\indent(1)]
\item $\abs{G:K_{i+1}} < \abs{G:K_i} < \infty$ for all $0\le i \le r-1$;
\item for all $u \in T$ with $\abs{u} \geq k$ , we have $K_i \leq (K_G)_{ux^i}$.
\end{enumerate}
We construct this collection of subgroups by induction. For $i = 0$, we have $K_0 := K_G$, and by definition
$$K_G \leq \St_G(\abs{u})_u \leq \St_G(k)_u \leq (K_G)_u.$$ 
Furthermore, $K_G$ has finite index in $G$ by assumption. 

Let us now define $K_{i+1}$ from $K_i$. As $K_i$ is open, it cannot be projection invariant. Thus, there exists $y \in \mathcal{L}_1$ and $h \in G$ such that $$h \in (K_i)_y \setminus K_i.$$ By \cref{remark: fractal groups property}, we can conjugate $K_i$ by an element $g \in G$ such that $y^g = x$ and $g|_y = g$, obtaining that $$g^{-1}hg \in (g^{-1} K_i g)_x \setminus (g^{-1} K_i g).$$
We define 
$$K_{i+1} := g^{-1} \langle K_i, h\rangle g.$$
By assumption $\abs{G:K_{i+1}} < \abs{G:K_i}$, so $K_{i+1}$ satisfies condition (1). Now, as $K_i$ satisfies condition (2), we have
\begin{align*}
g^{-1} K_i g \leq g^{-1} (K_G)_{ux^i} g = (K_G)_{ux^i}
\end{align*}
for all $u\in T$ with $\abs{u} \geq k$, and projecting to $x$, we further get 
$$g^{-1} h g \in (g^{-1} K_i g)_x \leq (K_G)_{ux^{i+1}}.$$
Similarly, applying (2) to the vertex $ux$ we further get
$$g^{-1} K_i g \leq (K_G)_{ux^{i+1}}.$$ 
Then, $K_{i+1} \leq (K_G)_{u x^{i+1}}$ and $K_{i+1}$ satisfies condition (2) too. This implies that $G \leq (K_G)_{ux^r}$, and by \cref{remark: fractal groups property}, we get
$$(K_G)_{u'} \geq G$$ 
for any $u' \in T$ such that $\abs{u'} \geq k+r$. Hence $G$ is mixing. Indeed, let $n,m \geq 1$ and set $N := k+r$. Then, if $v \in T$ such that $\abs{v} \geq n + N$, writing $v = v_1 v_2$ with $\abs{v_1} = n$ we conclude
\begin{equation*}
(\St_G(n))^m_v = (\St_G(\abs{v_1}))^m_{v_1v_2} \geq (K_G)_{v_2}^m \geq \pi_m(G) \qedhere
\end{equation*}

\end{proof}

\section{Dynamical systems associated to mixing groups}
\label{section: Dynamics}

In this section, we first revise the measure-preserving dynamical systems associated to fractal groups introduced by the first author in \cite{JorgeCyclicity}. Then, we study the mixing properties of these measure-preserving dynamical systems, extending previous results for super strongly fractal groups in \cite{JorgeCyclicity} and motivating the definition of the new class of mixing groups.

\subsection{Dynamical systems associated to fractal groups}

Let $(X,\mu)$ be a probability space and $S$ any monoid. A monoid action $\mathcal{S}$ of $S$ on $(X,\mu)$ is said to be \textit{measure-preserving} if for any measurable subset $Y\subseteq X$ and any $s\in S$ we have $\mu(\mathcal{S}_s^{-1}(Y))=\mu(Y)$, where $\mathcal{S}_s$ is the operator associated to the action of $s$. The tuple $(X,\mu,S,\mathcal{S})$, where $\mathcal{S}$ is measure-preserving, is called a \textit{measure-preserving dynamical system}.

The first author introduced a natural way to associate a measure-preserving dynamical system to a fractal profinite group in \cite{JorgeCyclicity}. Let $G\le \mathrm{Aut}(T)$ be a fractal closed subgroup and let $\mu_G$ be the unique Haar (Borel) probability measure in $G$. The Borel algebra of $G$ is generated by the so-called cone sets. For any $n\ge 1$ and $A\subseteq \pi_n(G)$, we define the \textit{cone set} $C_A$ as the fiber
$$C_A:=\pi_n^{-1}(A).$$
Furthermore, note that $C_A=\bigsqcup_{a\in A} g_a\mathrm{St}_G(n)$ with each $g_a\in G$ and  
$$\mu_G(C_A)=\sum_{a\in A} \mu_G(g_a\mathrm{St}_G(n))=\#A\cdot \mu_G(\mathrm{St}_G(n))=\frac{\#A}{|\pi_n(G)|}.$$
This standard fact will be used throughout the paper. 

We may view the $d$-regular rooted tree $T$ as the free monoid of rank $d$. What is more, we define a free monoid action $\mathcal{T}$ of $T$ on $(G,\mu_G)$ via sections, i.e.
$$\mathcal{T}_v(g):=g|_v$$
for every $v\in T$ and $g\in G$. It was proved by the first author in \cite{JorgeCyclicity} that this monoid action $\mathcal{T}$ is measure-preserving:

\begin{theorem}[{{see \cite[Theorem A]{JorgeCyclicity}}}]
    \label{theorem: fractal is dyn sys}
    Let $G\le \mathrm{Aut}(T)$ be a fractal profinite group. Then $(G,\mu_G,T,\mathcal{T})$ is a measure-preserving dynamical system.
\end{theorem}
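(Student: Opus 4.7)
\medskip

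The plan is to reduce the measure-preserving claim to cone sets and then exploit the fractal property together with the uniqueness of Haar measure on compact groups. Since the Borel $\sigma$-algebra of $G$ is generated by the algebra of cone sets and cone sets form a $\pi$-system, the monotone class theorem tells us it suffices to verify
$$\mu_G\bigl(\mathcal{T}_v^{-1}(C_A)\bigr)=\mu_G(C_A)=\frac{\#A}{|\pi_n(G)|}$$
for every $v\in T$, every $n\ge 1$, and every $A\subseteq \pi_n(G)$. Equivalently, since cosets of $\mathrm{St}_G(n)$ form the atoms of the finite algebra at level $n$, it suffices to check that for every $g_0\in G$,
$$\mu_G\bigl(\{h\in G : \pi_n(h|_v)=\pi_n(g_0)\}\bigr)=\frac{1}{|\pi_n(G)|}.$$

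The core step is to use the decomposition $G=\bigsqcup_{w}\mathrm{st}_G(v)\,t_w$, where $w$ ranges over the $G$-orbit $v^G$ of $v$ (which equals $\mathcal{L}_{|v|}$ by level-transitivity) and $t_w\in G$ is a coset representative with $v^{t_w}=w$. For $h=k\,t_w$ with $k\in \mathrm{st}_G(v)$, the cocycle identity for sections gives
$$h|_v=k|_v\cdot t_w|_v,$$
since $v^k=v$. Writing $s_w:=t_w|_v\in G$, the condition $\pi_n(h|_v)\in A$ becomes $\pi_n(k|_v)\in A\,\pi_n(s_w)^{-1}$, i.e.\ $k\in \varphi_v^{-1}\bigl(\pi_n^{-1}(A\,\pi_n(s_w)^{-1})\bigr)$. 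Thus
$$\mathcal{T}_v^{-1}(C_A)=\bigsqcup_{w\in v^G}\varphi_v^{-1}\bigl(\pi_n^{-1}(A\,\pi_n(s_w)^{-1})\bigr)\cdot t_w.$$

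The heart of the argument is that $\varphi_v\colon \mathrm{st}_G(v)\to G$ is well-defined, continuous, and a surjective group homomorphism onto $G$: well-definedness and continuity come from self-similarity (so sections land in $G$), the homomorphism property follows from $(k_1k_2)|_v=k_1|_v\cdot k_2|_v$ when $k_1,k_2\in \mathrm{st}_G(v)$, and surjectivity is precisely the fractal hypothesis. Now, a continuous surjective homomorphism between compact groups pushes the normalized Haar measure forward to the normalized Haar measure, so $\varphi_v$ is measure-preserving with respect to $\mu_{\mathrm{st}_G(v)}$ and $\mu_G$. Since $\mathrm{st}_G(v)$ is open with $[G:\mathrm{st}_G(v)]=|v^G|=:d_v$, we have $\mu_G|_{\mathrm{st}_G(v)}=d_v^{-1}\mu_{\mathrm{st}_G(v)}$. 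Combining these, together with left-translation invariance of $\mu_G$ and the fact that multiplication by $\pi_n(s_w)^{-1}$ preserves cardinality in $\pi_n(G)$, we get
$$\mu_G\bigl(\varphi_v^{-1}\bigl(\pi_n^{-1}(A\,\pi_n(s_w)^{-1})\bigr)\cdot t_w\bigr)=\frac{1}{d_v}\cdot\frac{\#A}{|\pi_n(G)|}.$$
Summing over the $d_v$ orbit representatives $w$ yields exactly $\#A/|\pi_n(G)|=\mu_G(C_A)$, as desired.

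I do not expect any serious obstacle: the computation is essentially bookkeeping once one notices the two key facts, namely that sections restricted to $\mathrm{st}_G(v)$ form a group homomorphism and that fractality promotes this homomorphism to a surjection, after which the uniqueness of Haar measure does the rest. The only mildly delicate point is getting the cocycle identity $h|_v=k|_v\cdot t_w|_v$ right so that the partition into cosets of $\mathrm{st}_G(v)$ translates cleanly into a cardinality-preserving translation on $\pi_n(G)$; this is where the right-action convention pays off.
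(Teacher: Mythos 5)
Your proof is correct. The paper does not reprove this statement; it cites \cite[Theorem~A]{JorgeCyclicity}, so a line-by-line comparison is not possible, but your argument is the natural one and is consistent with the ergodic-theoretic framework that the reference sets up. The decisive ingredients you isolate are exactly the right ones: $\varphi_v\colon\mathrm{st}_G(v)\to G$ is a continuous homomorphism by self-similarity and is surjective precisely because $G$ is fractal, so by uniqueness of normalized Haar measure on compact groups it pushes $\mu_{\mathrm{st}_G(v)}$ forward to $\mu_G$; the coset decomposition along $v^G$ then converts the section condition into a right-translate of a cone set, and the counting is routine.

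Two small remarks. First, there is a slightly shorter route that avoids the coset decomposition entirely: the map $\mathcal{T}_v$ is continuous, hence the pushforward $\nu:=\mu_G\circ\mathcal{T}_v^{-1}$ is a Borel probability measure on $G$; for any $g\in G$, fractality gives $\tilde g\in\mathrm{st}_G(v)$ with $\tilde g|_v=g$, and then $\mathcal{T}_v^{-1}(gS)=\tilde g\cdot\mathcal{T}_v^{-1}(S)$, so $\nu$ is left-invariant and by uniqueness $\nu=\mu_G$. This bypasses the $\pi$-system reduction and the explicit coset bookkeeping, and makes the role of fractality even more transparent. Second, a terminological quibble: when you move $t_w$ out of the measure you are using right-translation invariance of $\mu_G$, not left; this is harmless since $G$ is compact and therefore unimodular, but it is worth stating correctly.
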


Furthermore, it was shown in \cite[Theorem A]{JorgeCyclicity} that if $G$ is further assumed to be super strongly fractal, then the measure-preserving dynamical system $(G,\mu_G,T,\mathcal{T})$ satisfies a strong mixing property. The goal of the remainder of the section is to show that the class of mixing groups is precisely the subclass of fractal groups whose associated measure-preserving dynamical systems satisfy a (weaker) mixing condition.

\subsection{A pseudomixing property}

The next proposition shows that the dynamical system associated to a mixing group satisfies a pseudomixing property, which justifies the term \textit{mixing}:

\begin{proposition}[Pseudomixing]
\label{proposition: mixing property}
Let $G\le \mathrm{Aut}(T)$ be a mixing group with delay constant $N$. Then, for every $n,m\ge 1$, every vertex $v$ with $|v|\ge n+ N$ and every $A\subseteq \pi_n(G)$ and $B\subseteq \pi_m(G)$, we have
$$\mu_G(C_A\cap \mathcal{T}_v^{-1}(C_B)\cap \mathcal{T}_u^{-1}(\mathrm{st}_G(w)))=\mu_G(C_A)\cdot \mu_G(C_B)\cdot \mu_G(\mathrm{st}_G(w)),$$
where $u$ is at level $n$ and $v=uw$. In particular
$$\mu_G(C_A\cap \mathcal{T}_v^{-1}(C_B)\mid \mathcal{T}_u^{-1}(\mathrm{st}_G(w)))=\mu_G(C_A)\cdot \mu_G(C_B).$$
\end{proposition}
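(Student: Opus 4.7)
The plan is to combine the chain rule for sections $g|_v=(g|_u)|_w$ with a coset decomposition of $C_A$, and to use both conditions of mixing to transfer Haar measures through the relevant section homomorphisms. First, I would consolidate the two events at $v$ and $u$ into a single event at $u$ by writing
$$\mathcal{T}_v^{-1}(C_B) \cap \mathcal{T}_u^{-1}(\mathrm{st}_G(w)) = \mathcal{T}_u^{-1}(D),\qquad D:=\mathrm{st}_G(w) \cap \mathcal{T}_w^{-1}(C_B).$$
Decomposing $C_A = \bigsqcup_{a \in A} g_a\,\mathrm{St}_G(n)$ for chosen lifts $g_a\in G$, it suffices to show every coset contributes $\mu_G(\mathrm{St}_G(n)) \cdot \mu_G(\mathrm{st}_G(w)) \cdot \mu_G(C_B)$. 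Fixing one coset, I would parametrize $g=g_a h$ with $h$ uniform on $\mathrm{St}_G(n)$, set $u' := u^{g_a}$ (also at level $n$), and use $(g_a h)|_u = g_a|_u \cdot h|_{u'}$ to rephrase $g|_u \in D$ as $h|_{u'} \in g_a|_u^{-1} D$. Since $\mathrm{St}_G(n) \leq \mathrm{st}_G(u')$, the section map $\varphi_{u'}\colon \mathrm{St}_G(n) \to H := (\mathrm{St}_G(n))_{u'}$ is a surjective continuous homomorphism of compact groups and hence pushes Haar to Haar, so $h|_{u'}$ is Haar-uniform on $H$, reducing the task to computing $\mu_H(H \cap g_a|_u^{-1} D)$.

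Spelling out $D$, an element $y\in H$ belongs to $H\cap g_a|_u^{-1}D$ precisely when $y$ sends $w_0:=w^{g_a|_u}$ to $w$ and $\pi_m(y|_{w_0})\in \pi_m(g_a|_u|_w)^{-1}B$. By mixing condition (i) the subgroup $H$ acts level-transitively on $T$, so choosing any $h_0\in H$ with $w_0^{h_0}=w$ expresses $\{y\in H : w_0^y = w\}$ as the right coset $\mathrm{st}_H(w_0)\,h_0$, and right-translation invariance of $\mu_H$ reduces the computation to one over $\mathrm{st}_H(w_0)$ with the target merely replaced by a translate of size $|B|$. Level-transitivity of $H$ also gives $\mu_H(\mathrm{st}_H(w_0)) = d^{-|w|} = \mu_G(\mathrm{st}_G(w))$. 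The crucial step is now mixing condition (ii), applied to the vertex $u'w_0$ of level $n+|w|\geq n+N$: using $(\mathrm{St}_G(n))_{u'w_0}=H_{w_0}$, it yields $H_{w_0}^m = \pi_m(G)$, so the composition $\pi_m\circ \varphi_{w_0}\colon \mathrm{st}_H(w_0)\to \pi_m(G)$ is a surjective continuous group homomorphism whose pushforward of Haar is Haar on $\pi_m(G)$. Hence the probability that $\pi_m(z|_{w_0})$ lands in any set of size $|B|$ equals $|B|/|\pi_m(G)|=\mu_G(C_B)$, completing the per-coset computation; summation over $a\in A$ gives the claim.

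The main (mild) obstacle is coset bookkeeping: the left multiplications by $g_a|_u$ and $h_0|_{w_0}$ produce translates of $B$ inside $\pi_m(G)$, but each such translate has the same size as $B$ and is absorbed by the right-invariance of Haar measure on $\pi_m(G)$, which is precisely why the surjectivity statement in mixing condition (ii) is strong enough despite not prescribing any pointwise normalization. The ``in particular'' conditional statement then follows by dividing by $\mu_G(\mathcal{T}_u^{-1}(\mathrm{st}_G(w))) = \mu_G(\mathrm{st}_G(w))$, the last equality being the measure preservation of $\mathcal{T}_u$ from \cref{theorem: fractal is dyn sys}.
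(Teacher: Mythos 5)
Your argument is correct, and it reaches the same measure-theoretic conclusion via a genuinely different computational route. The paper decomposes the whole target set $C_A\cap \mathcal{T}_v^{-1}(C_B)\cap \mathcal{T}_u^{-1}(\st_G(w))$ in one stroke into $(\#A)(\#B)$ left cosets of $\ker(\varphi_v^m|_{\St_G(n)\cap\st_G(v)})$ (after first verifying nonemptiness of the per-pair $(a,b)$ fibers by an explicit three-element construction), and then computes $\mu_G$ of the kernel by a chain of isomorphism theorems: surjectivity of the induced map $\widehat{\varphi}$ on finite quotients, which is mixing condition (ii), supplies the factor $1/|\pi_m(G)|$, and the orbit–stabilizer theorem together with mixing condition (i) supplies the factor $d^{-|w|}$. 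You instead stratify only $C_A=\bigsqcup_a g_a\St_G(n)$ and then run a two-stage pushforward argument through the chain rule for sections: first through $\varphi_{u'}\colon \St_G(n)\twoheadrightarrow H:=(\St_G(n))_{u'}$, converting the problem to computing $\mu_H(H\cap (g_a|_u)^{-1}D)$, and then inside $H$ through $\pi_m\circ\varphi_{w_0}\colon \st_H(w_0)\twoheadrightarrow \pi_m(G)$ on the coset $\st_H(w_0)h_0$. The two uses of mixing appear at the same logical spots (condition (i) gives level-transitivity of $H$, hence both the existence of $h_0$ and $\mu_H(\st_H(w_0))=d^{-|w|}$; condition (ii) gives surjectivity of $\pi_m\circ\varphi_{w_0}$ via the identity $(\St_G(n))_{u'w_0}=H_{w_0}$), but the bookkeeping is organized differently: you never need the explicit nonemptiness construction, because nonemptiness comes for free once you know both homomorphisms are surjective, and the isomorphism-theorem chain is replaced by the clean ``surjective continuous homomorphism of compact groups pushes Haar to Haar'' principle applied twice. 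Your route is slightly more conceptual and makes the probabilistic/pseudomixing structure more transparent, at the mild cost of having to be careful with the translates of $B$ by $\pi_m((g_a|_u)|_w)$ and $\pi_m(h_0|_{w_0})$ — which, as you correctly note, are absorbed by translation-invariance of Haar on $\pi_m(G)$ and only require knowing the size $\#B$.
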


\begin{proof}
Let $a \in A$ and $b \in B$. Then 
\begin{align*}
C_a\cap \mathcal{T}_v^{-1}(C_b)\cap \mathcal{T}_u^{-1}(\st_G(w)) = \set{g \in G: g|_\emptyset^n = a, \,\, g|_v^m = b, \,\, w^{g|_u} = w}.
\end{align*}

As $G$ is mixing, the set above is non-empty. Indeed, let $h_3 \in G$ such that $\pi_n(h_3) = a$ and let $w' \in \mathcal{L}_{\abs{w}}$ such that $(w')^{h_3|_u} = w$. As $\St_G(n)$ acts level-transitively below $u$, there exists $h_2 \in \St_G(n)$ such that $(uw)^{h_2} = uw'$. Moreover, since $G$ is mixing, there exists $h_1 \in \St_G(n) \cap \st_G(v)$ such that 
$$h_1|_{v}^m = b \cdot (h_2|_{v}^m)^{-1} \cdot (h_3|_{uw'}^m)^{-1}.$$
Thus, 
$\pi_n(h_1 h_2 h_3) = a$, the section $(h_1 h_2 h_3)|_u$ fixes $w$ and
$$h_1 h_2 h_3|_v^m = (h_1 |_v^m) \cdot (h_2|_v^m) \cdot (h_3|_{uw'}^m) = b.$$  
Therefore, we may express 
$$C_A\cap \mathcal{T}_{v}^{-1}(C_B)\cap \mathcal{T}_u^{-1}(\mathrm{st}_G(w))$$
as the disjoint union of $(\#A)\cdot (\#B)$ cosets of $\ker(\varphi_v^m|_{\mathrm{St}_G(n)})$. By the translation-invariance of $\mu_G$, we get
\begin{align*}
\mu_G(C_A\cap \mathcal{T}_v^{-1}(C_B)\cap \mathcal{T}_u^{-1}(\st_G(w))) =
(\# A) \cdot (\#B) \cdot \mu_G(\ker(\varphi_v^m|_{\St_G(n)})).
\end{align*}

To compute the measure of the kernel, we first observe that $\varphi_{v}^m|_{\St_G(n)}$ induces a group homomorphism  $$\widehat{\varphi}: \pi_{\abs{v} + m}(\St_G(n) \cap \st_G(v)) \to \pi_m(G)$$ and hence
\begin{align*}
\mu_G(\ker(\varphi_v^m|_{\St_G(n)})) = \frac{\abs{\ker(\widehat{\varphi})}}{\abs{G: \St_G(\abs{v} + m)}}.
\end{align*}
Now, as $G$ is mixing, the map $\widehat{\varphi}$ is surjective and by the first and third isomorphism theorems we obtain that
\begin{align*}
\mu_G(\ker(\varphi_v^m|_{\St_G(n)})) & = \frac{1}{\abs{\pi_m(G)}} \cdot \frac{\abs{\pi_{\abs{v} + m}(\St_G(n) \cap \st_G(v))}}{\abs{G: \St_G(\abs{v} + m)}} \\
 & = \frac{1}{\abs{\pi_m(G)}} \cdot \frac{\abs{\St_G(n) \cap \st_G(v) : \St_G(\abs{v} + m)}}{\abs{G: \St_G(\abs{v} + m)}}\\
 &=\frac{1}{|\pi_m(G)|}\cdot \frac{1}{|G:\mathrm{St}_G(n)\cap \mathrm{st}_G(v)|}\\
 &=\frac{1}{|\pi_m(G)|} \cdot \frac{1}{|\pi_n(G)|} \cdot \frac{1}{|\mathrm{St}_G(n):\mathrm{St}_G(n)\cap \mathrm{st}_G(v)|}.
\end{align*}
As $\St_G(n)$ acts level-transitively on the subtree rooted at $u$, the orbit-stabilizer theorem yields
$$\abs{\St_G(n): \St_G(n) \cap \st_G(v)}^{-1} = d^{-\abs{w}} = \mu_G(\st_G(w)).$$ 
Combining everything, 
\begin{align*}
\mu_G(C_A\cap \mathcal{T}_v^{-1}(C_B)\cap \mathcal{T}_u^{-1}(\st_G(w))) & =
(\# A) \cdot (\#B) \cdot \mu_G(\ker(\varphi_{v}^m|_{\St_G(n)})) \\
& = \frac{\#A}{\abs{\pi_n(G)}} \cdot \frac{\#B}{\abs{\pi_m(G)}} \cdot \mu_G(\st_G(w))  \\
& = \mu_G(C_A) \cdot \mu_G(C_B) \cdot \mu_G(\st_G(w)).
\end{align*}

Finally, by \cref{theorem: fractal is dyn sys},
\begin{align*}
\mu_G(C_A \cap \mathcal{T}_v^{-1}(C_B) \mid \mathcal{T}_u^{-1}(\st_G(w))) & = \frac{\mu_G(C_A \cap \mathcal{T}_v^{-1}(C_B) \cap \mathcal{T}_u^{-1}(\st(w)))}{\mu_G(\mathcal{T}_u^{-1}(\st_G(w)))} \\ 
& = \frac{\mu_G(C_A) \cdot \mu_G(C_B) \cdot \mu_G(\st_G(w))}{\mu_G(\st_G(w))} \\
& = \mu_G(C_A) \cdot \mu_G(C_B) \qedhere
\end{align*}
\end{proof}

This pseudomixing property is precisely what we want when studying the fixed-point proportion of a self-similar group. In fact, the fixed-point proportion concerns the set of elements in the group fixing an end. Thus, it is conditioned on this set of elements precisely when we may apply the mixing property in \cref{proposition: mixing property}.

\section{Fixed-point proportion and the martingale strategy}
\label{section: FPP}

The main goal of the section is to prove \cref{Theorem: main result FPP intro}, i.e. that mixing groups have zero fixed-point proportion. First, we review the martingale strategy of Jones and introduce the fixed-point process and the fixed-point proportion of a self-similar group. Then, we use the mixing properties of the associated dynamical systems obtained in \cref{section: Dynamics} to prove \cref{Theorem: main result FPP intro}.

\subsection{Fixed-point processes, martingales and the fixed-point proportion}

Let $\mathscr{T}$ be a $d$-regular rooted tree, $G\le \mathrm{Aut}(\mathscr{T})$ a closed subgroup and $\mu_G$ its unique Haar probability measure. We define the \textit{fixed-point proportion} $\mathrm{FPP}(G)$ of $G$ as
\begin{align*}
\mathrm{FPP}(G):&=\mu_G(F_G),
\end{align*}
where $F_G\subset G$ is the closed Borel subset of $G$ given by
$$F_G:=\{g\in G : g \text{ fixes an end in }\partial \mathscr{T}\}.$$

\begin{remark}
\label{remark: FPP invariant under relabeling}
Note that if $T$ is the $d$-regular rooted tree obtained from $X^\NN$ and $\phi: \mathscr{T} \rightarrow T$ is any labeling for $\mathscr{T}$, then $\FPP(G) = \FPP(\rho_\phi(G))$. Therefore, for the remainder of \cref{section: FPP}, we may assume that $G \leq \Aut(T)$.
\end{remark}

Let $G\le \mathrm{Aut}(T)$ be a closed subgroup. The \textit{fixed-point process} of $G$ is the real stochastic process $\{X_n\}_{n\ge 1}$, where the random variables $X_n:(G,\mu_G)\to \mathbb{N}\cup\{ 0\}$ are given by
$$X_n(g):=\# \text{ vertices at level }n\text{ fixed by }g.$$

A key insight of Jones was to realize that the fixed-point process of self-similar groups is in many instances a martingale \cite{JonesComp}. Recall that a real stochastic process $\{Y_n\}_{n\ge 1}$ defined over a probability space $(X,\mu)$ is a \textit{martingale} if for all $n\ge 1$ we have
\begin{enumerate}[\normalfont(i)]
\item $\mathbb{E}(Y_n)<\infty$;
\item $\mathbb{E}(Y_{n+1}\mid Y_1=t_1,\dotsc, Y_n=t_n)=t_{n}$ for every $t_1,\dotsc, t_n\in \mathbb{R}$ such that $\mu(Y_1=t_1,\dotsc, Y_n=t_n)>0$.
\end{enumerate}

Non-negative martingales converge almost surely:

\begin{theorem}[Martingale convergence]
Let $\{Y_n\}_{n\ge 1}$ be a non-negative martingale over a probability space $(X,\mu)$ with $\mathrm{E}(Y_1)<\infty$. Then
$$\lim_{n\to\infty} Y_n(x)$$
exists for $x\in X$ almost surely and with finite value.
\end{theorem}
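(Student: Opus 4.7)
The plan is to reproduce Doob's classical upcrossing argument, which is the standard route to almost sure convergence for non-negative martingales. For each pair of rationals $a<b$ and each $n\ge 1$, I would define $U_n[a,b](x)$ as the number of upcrossings of the interval $[a,b]$ performed by the finite sequence $Y_1(x),\dotsc,Y_n(x)$; formally, the largest $k$ for which there exist indices $s_1<t_1<s_2<t_2<\dotsb<s_k<t_k\le n$ with $Y_{s_i}(x)\le a$ and $Y_{t_i}(x)\ge b$ for all $i$. The central input is the Doob upcrossing inequality
$$(b-a)\,\mathbb{E}(U_n[a,b])\le \mathbb{E}\bigl((Y_n-a)^+\bigr),$$
obtained by integrating the martingale increments of $Y$ against a predictable $\{0,1\}$-valued process that is $1$ precisely during an ongoing upcrossing; each completed upcrossing contributes at least $b-a$ to the resulting transformed martingale, whereas an unfinished one can be absorbed by the $(Y_n-a)^+$ term.

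Next, I would exploit non-negativity together with the martingale identity $\mathbb{E}(Y_n)=\mathbb{E}(Y_1)$. Since $(Y_n-a)^+\le Y_n+|a|$, one gets $\mathbb{E}((Y_n-a)^+)\le \mathbb{E}(Y_1)+|a|$ uniformly in $n$. Letting $n\to\infty$ and applying the monotone convergence theorem to the increasing sequence $U_n[a,b]\nearrow U_\infty[a,b]$ yields $\mathbb{E}(U_\infty[a,b])<\infty$, whence $U_\infty[a,b]<\infty$ almost surely. Then I would observe that if $(Y_n(x))$ fails to converge in $[0,\infty]$ at a point $x$, then $\liminf_n Y_n(x)<\limsup_n Y_n(x)$, so some pair of rationals $a<b$ is sandwiched strictly between them, forcing $U_\infty[a,b](x)=\infty$. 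Therefore
$$\{x: Y_n(x)\text{ does not converge in }[0,\infty]\}\subseteq \bigcup_{\substack{a,b\in\mathbb{Q}\\ a<b}} \{U_\infty[a,b]=\infty\},$$
a countable union of null sets, hence null. This gives almost sure convergence of $Y_n$ to some $[0,\infty]$-valued limit $Y_\infty$.

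Finally, to upgrade the limit from $[0,\infty]$ to a finite value almost surely, I would apply Fatou's lemma to the non-negative random variables $Y_n$:
$$\mathbb{E}(Y_\infty)=\mathbb{E}\Bigl(\liminf_{n\to\infty}Y_n\Bigr)\le \liminf_{n\to\infty}\mathbb{E}(Y_n)=\mathbb{E}(Y_1)<\infty,$$
so $Y_\infty(x)<\infty$ for $\mu$-almost every $x$.

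The only genuinely delicate step is the upcrossing inequality, which requires the predictable-process/martingale-transform construction and careful accounting of completed versus incomplete upcrossings; the remaining steps reduce to monotone convergence, Fatou's lemma, and the elementary observation that divergence of a real sequence forces infinitely many upcrossings of some rational interval. In the present context this theorem is invoked as a classical black box (Doob, 1940s), so no novelty is required beyond recalling the standard proof.
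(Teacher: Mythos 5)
The paper states this theorem without proof, treating it as the classical Doob martingale convergence theorem. Your proposal is correct and reproduces the standard upcrossing argument essentially verbatim: Doob's upcrossing inequality gives $(b-a)\,\mathbb{E}(U_n[a,b])\le\mathbb{E}((Y_n-a)^+)$, the bound $(Y_n-a)^+\le Y_n+|a|$ together with $\mathbb{E}(Y_n)=\mathbb{E}(Y_1)$ and monotone convergence gives $U_\infty[a,b]<\infty$ a.s.\ for each rational pair, a countable union over rational pairs shows almost sure convergence in $[0,\infty]$, and Fatou's lemma upgrades the limit to a finite value. Since there is no proof in the paper to compare against, the only thing worth saying is that your route is the canonical one and it is sound.
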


If the fixed-point process $\{X_n\}_{n\ge 1}$ of $G$ is a martingale, then $\{X_n\}_{n\ge 1}$ is eventually constant for almost all $g \in G$. Indeed, as $X_n(g)$ is a non-negative integer for every $n\ge 1$ and $g\in G$, the fixed-point process must stabilize after only finitely many steps. 

Those closed subgroups whose fixed-point process is a martingale have been completely characterized:

\begin{proposition}[{see {\cite[Theorem 5.3]{Bridy}} and \cite{JonesLMS}}]
\label{proposition: martingale characterization}
    Let $G\le \mathrm{Aut}(T)$ be a closed subgroup. Then, its fixed-point process is a martingale if and only if for every $n\ge 1$, the subgroup $\mathrm{St}_G(n)$ acts level-transitively on the subtrees rooted at level $n$.
\end{proposition}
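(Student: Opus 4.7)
\emph{Proof plan.} The plan is to derive an explicit formula for $\mathbb{E}(X_{n+1} \mid \pi_n = \sigma)$ in terms of the local groups $H_u := \{\pi_1(g|_u) : g \in \mathrm{St}_G(n)\}$ for $u \in \mathcal{L}_n$, and then exploit the extremal case $\sigma = e$ to extract the desired transitivity.

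Fix $n \ge 1$, $\sigma \in \pi_n(G)$ and a lift $g_0 \in \pi_n^{-1}(\sigma)$; every $g \in \pi_n^{-1}(\sigma)$ writes uniquely as $g = g_0 h$ with $h \in \mathrm{St}_G(n)$, and under normalised Haar on this coset, $h$ is Haar-distributed on $\mathrm{St}_G(n)$. Decomposing a level-$(n+1)$ vertex as $v = ux$ with $u \in \mathcal{L}_n$ and $x \in X$, the identity $(ux)^g = u^\sigma \cdot x^{\pi_1(g_0|_u)\pi_1(h|_u)}$ shows that $ux$ is fixed by $g$ exactly when $u$ is fixed by $\sigma$ and $x$ is fixed by $\pi_1(g_0|_u)\pi_1(h|_u)$. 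The map $h \mapsto \pi_1(h|_u)$ is a continuous surjective group homomorphism from $\mathrm{St}_G(n)$ onto $H_u$, so it pushes Haar to the uniform measure on the finite group $H_u$. Summing over $u$ yields the master formula
\[
\mathbb{E}(X_{n+1} \mid \pi_n = \sigma) = \sum_{u \in \mathrm{Fix}_\sigma(\mathcal{L}_n)} \frac{1}{|H_u|} \sum_{\tau \in H_u} |\mathrm{Fix}(\pi_1(g_0|_u)\tau)|.
\]

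For the ($\Leftarrow$) direction, assume $\mathrm{St}_G(n)$ acts level-transitively on every subtree rooted at level $n$, for every $n \ge 1$; in particular each $H_u$ acts transitively on $X$. A coset version of Burnside's lemma handles the inner sum: for each $x \in X$, orbit--stabiliser applied to the transitive $H_u$-set $X$ shows that $\{\tau \in H_u : x^{\pi_1(g_0|_u)\tau} = x\}$ has size exactly $|H_u|/|X|$, so $\frac{1}{|H_u|}\sum_{\tau \in H_u}|\mathrm{Fix}(\pi_1(g_0|_u)\tau)| = 1$. Hence $\mathbb{E}(X_{n+1} \mid \pi_n = \sigma) = |\mathrm{Fix}_\sigma(\mathcal{L}_n)| = X_n$; since $X_1,\dots,X_n$ are $\pi_n$-measurable, the tower property promotes this to $\mathbb{E}(X_{n+1} \mid X_1,\dots,X_n) = X_n$, and the trivial bound $X_n \le d^n$ gives integrability, so $\{X_n\}_{n \ge 1}$ is a martingale.

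For the ($\Rightarrow$) direction I evaluate at $\sigma = e$. Since fixing every level-$n$ vertex forces fixing every vertex at lower levels, the event $\{X_n = d^n\}$ coincides with $\mathrm{St}_G(n)$, which has positive Haar measure $1/|\pi_n(G)|$, so the martingale property forces $\mathbb{E}(X_{n+1} \mid \pi_n = e) = d^n$. Specialising the master formula to $g_0 = e$ and applying ordinary Burnside for each $H_u$ yields
\[
d^n = \sum_{u \in \mathcal{L}_n} |X/H_u|,
\]
and since there are $d^n$ terms each at least $1$, equality forces $|X/H_u| = 1$ for every $u$, i.e., $H_u$ is transitive on $X$. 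A short induction on $k$ then upgrades this first-level transitivity to full level-transitivity of $\mathrm{St}_G(n)$ on the subtree rooted at $u \in \mathcal{L}_n$: given $vy, v'y'$ at level $k+1$, first move $v$ to $v'$ by an element of $\mathrm{St}_G(n)$ from the inductive hypothesis, then correct the last coordinate by an element of $\mathrm{St}_G(n+k) \le \mathrm{St}_G(n)$ supplied by transitivity of $H_{v'}$. The main technical point to keep honest is the averaging step itself---namely, that $h \mapsto \pi_1(h|_u)$ really does push Haar on $\mathrm{St}_G(n)$ to Haar on $H_u$, and that the coset-Burnside identity goes through without any self-similarity hypothesis on $G$.
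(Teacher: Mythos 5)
The paper cites this result (to \cite[Theorem 5.3]{Bridy} and \cite{JonesLMS}) rather than proving it, so there is no internal argument to compare against; your proof stands as a correct, self-contained derivation. The master formula — obtained by conditioning on $\pi_n = \sigma$, factoring $g = g_0 h$ over a coset of $\St_G(n)$, and pushing Haar measure through the continuous surjective homomorphism $h \mapsto \pi_1(h|_u)$ of compact groups onto the finite group $H_u$ — is the right invariant, and the two Burnside computations (the coset version for $(\Leftarrow)$, and the plain version combined with the rigidity $\sum_{u\in\mathcal{L}_n}|X/H_u|=d^n$ for $(\Rightarrow)$) close both directions. The final induction promoting transitivity of each $H_u$ on $X$ (at every level) to level-transitivity of $\St_G(n)$ on each subtree rooted at level $n$ is also sound, using $\St_G(n+k)\le\St_G(n)$ at each step. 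This matches the strategy of the cited references in spirit. One cosmetic point to watch: the displayed identity $(ux)^g = u^\sigma\cdot x^{\pi_1(g_0|_u)\pi_1(h|_u)}$ is stated for general $u$ but holds only when $u^\sigma = u$, since the composition rule gives $(g_0h)|_u = g_0|_u\cdot h|_{u^{g_0}}$; because you immediately restrict to $u\in\mathrm{Fix}_\sigma(\mathcal{L}_n)$ this does not affect anything, but the formula as written is slightly off off the fixed-point set.
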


Here, we are interested in providing sufficient conditions for the fixed-point proportion of a group to be zero. Equivalently, we want sufficient conditions for the fixed-point process of a group to converge to zero (to become eventually zero under the martingale assumption) almost surely. We shall use the martingale strategy developed by Jones in \cite{JonesComp}; see \cite[Lemma 3.2]{JorgeSantiFPP} for a detailed proof:

\begin{lemma}[Martingale strategy]
\label{lemma: Jones strategy}
Let $G\le \mathrm{Aut}(T)$ be a closed subgroup. Assume that:
\begin{enumerate}[\normalfont(i)]
\item the fixed-point process $\{X_n\}_{n\ge 1}$ of $G$ is a martingale;
\item for any $r>0$ there exists $\epsilon:=\epsilon(r)$ and $m:=m(r)$ such that for infinitely many $n\ge 1$ we have $\mu_G(X_{n+m} = r \mid  X_n = r) \leq 1 - \epsilon$.
\end{enumerate}
Then, the fixed-point process of $G$ is eventually zero almost surely.
\end{lemma}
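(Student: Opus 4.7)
The plan is to run a standard martingale-convergence argument and squeeze every positive mass out of the limit using hypothesis (ii). First, since $\{X_n\}_{n\ge 1}$ is a non-negative martingale bounded in $L^1$ (indeed $\mathbb{E}(X_n) = \mathbb{E}(X_1)$ for every $n$), the Martingale Convergence Theorem quoted above gives an integrable random variable $X_\infty:G\to\mathbb{N}\cup\{0\}$ such that $X_n(g)\to X_\infty(g)$ for $\mu_G$-almost every $g$. The crucial observation, already noted by the authors right after the definition of the fixed-point process, is that $X_n$ takes values in $\mathbb{N}\cup\{0\}$, so a convergent integer sequence is eventually constant: for $\mu_G$-a.e.\ $g$ there exists $N(g)$ with $X_n(g)=X_\infty(g)$ for all $n\ge N(g)$. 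Writing $A_r^N:=\{g : X_n(g)=r \text{ for every } n\ge N\}$, this means that up to a $\mu_G$-null set
$$
\{X_\infty=r\}=\bigcup_{N\ge 1} A_r^N, \qquad A_r^N\subseteq A_r^{N+1},
$$
so in particular $\mu_G(A_r^N)\nearrow \mu_G(X_\infty=r)$ as $N\to\infty$.

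The goal is to show $\mu_G(X_\infty=r)=0$ for every $r\ge 1$, which will force $X_\infty\equiv 0$ a.s.\ and hence $\mathrm{FPP}(G)=\mu_G(X_n>0 \text{ for all } n)=0$. Fix $r\ge 1$ and suppose for contradiction that $\mu_G(X_\infty=r)>0$. Since $X_n(g)$ is eventually equal to $X_\infty(g)$ for almost every $g$, the indicator $\mathbb{1}_{\{X_n=r\}}$ converges a.s.\ to $\mathbb{1}_{\{X_\infty=r\}}$, and by bounded convergence
$$
\lim_{n\to\infty}\mu_G(X_n=r)=\mu_G(X_\infty=r).
$$
Now let $m=m(r)$ and $\epsilon=\epsilon(r)$ be supplied by hypothesis (ii). For every $N\ge 1$ and every $n\ge N$ the inclusion $A_r^N\subseteq\{X_n=r\}\cap\{X_{n+m}=r\}$ yields
$$
\mu_G(A_r^N)\le \mu_G(X_n=r,\, X_{n+m}=r)=\mu_G(X_n=r)-\mu_G(X_n=r,\, X_{n+m}\ne r),
$$
and by (ii) the right-hand side is bounded above by $(1-\epsilon)\mu_G(X_n=r)$ for infinitely many $n\ge N$. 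Passing to the limit along such an infinite subsequence and using the bounded-convergence identity above gives
$$
\mu_G(A_r^N)\le (1-\epsilon)\,\mu_G(X_\infty=r).
$$
Finally, letting $N\to\infty$ produces $\mu_G(X_\infty=r)\le(1-\epsilon)\mu_G(X_\infty=r)$, which contradicts $\mu_G(X_\infty=r)>0$ since $\epsilon>0$. Hence $\mu_G(X_\infty=r)=0$ for every $r\ge 1$, so the fixed-point process is eventually zero almost surely.

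The only subtle point I anticipate is the justification of $\mu_G(X_n=r)\to\mu_G(X_\infty=r)$, which requires the integer-valued observation (martingale convergence alone would only give a.s.\ convergence of $X_n$, not of $\mathbb{1}_{\{X_n=r\}}$); once that is in place, the argument is a one-line contradiction from (ii). Everything else is bookkeeping.
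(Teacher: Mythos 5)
Your proof is correct. The paper itself does not include a proof of this lemma (it defers to \cite[Lemma~3.2]{JorgeSantiFPP}), but the argument you give is the natural one and is the standard implementation of the Jones martingale strategy: apply the martingale convergence theorem, exploit integer-valuedness to get eventual constancy, and then show that $\mu_G(X_\infty = r) = 0$ for each $r \geq 1$ by a contradiction using hypothesis (ii) on a single application rather than an iterated product of conditional probabilities.

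One small remark that would tighten the write-up: you can avoid the bookkeeping with the sets $A_r^N$ entirely. Since $X_n$ is eventually constant a.s., the indicator $\mathbb{1}_{\{X_n = r,\, X_{n+m} = r\}}$ also converges a.s.\ to $\mathbb{1}_{\{X_\infty = r\}}$, so by bounded convergence $\mu_G(X_n = r,\, X_{n+m} = r) \to \mu_G(X_\infty = r)$ directly. Combining this with (ii) along the given infinite subsequence immediately yields $\mu_G(X_\infty = r) \le (1-\epsilon)\mu_G(X_\infty = r)$, and the contradiction follows. This removes the intermediate step of passing to the limit in $N$ and avoids any implicit appeal to the conditioning event having positive measure (if $\mu_G(X_n = r) = 0$ along the subsequence, the inequality is trivial and the conclusion still holds). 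Your version is correct as written; this is purely a stylistic simplification.
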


\subsection{Proof of the main result}

We conclude the section by proving \cref{Theorem: main result FPP intro}. The proof builds up on the use of the mixing property of super strongly fractal groups developed by the authors in \cite{JorgeSantiFPP}. Indeed, we prove that $$\mu_G(X_{n+N}>r\mid X_n=r)\ge \epsilon>0$$
for any $n\ge 1$, i.e. that the fixed-point process cannot converge unless $r=0$. This is done by using the mixing property in \cref{proposition: mixing property} to increase the number of fixed points at any level deep enough in the tree, always with a fixed probability.

Before proving \cref{Theorem: main result FPP intro}, we restate it here for the convenience of the reader:

\begin{theorem}
    \label{theorem: main result FPP}
    Let $G\le \mathrm{Aut}(T)$ be a mixing group. Then $\mathrm{FPP}(G)=0$.
\end{theorem}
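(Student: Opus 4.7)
The plan is to apply the martingale strategy of \cref{lemma: Jones strategy}. First I would verify hypothesis (i) of that lemma: condition (i) of \cref{definition: mixing group} is precisely the hypothesis of the martingale characterization in \cref{proposition: martingale characterization}, so the fixed-point process $\{X_n\}_{n\ge 1}$ of $G$ is automatically a martingale. The real task is then hypothesis (ii): exhibit, for every integer $r\ge 1$, constants $\epsilon=\epsilon(r)>0$ and $M=M(r)\ge 1$ with $\mu_G(X_{n+M}=r\mid X_n=r)\le 1-\epsilon$ for infinitely many (indeed all) $n\ge 1$.

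To produce these constants I would exploit the pseudomixing identity of \cref{proposition: mixing property} to create, with positive probability bounded away from zero, a block of $d^m>r$ new fixed vertices deep in the tree. Concretely, given $r\ge 1$, I would choose $m$ with $d^m>r$, let $N=N(m)$ be the delay constant from \cref{definition: mixing group}, and set $M:=N+m$. For a subset $S\subseteq \mathcal{L}_n$ with $|S|=r$, the event $B_S$ that the fixed-point set of $g$ at level $n$ equals $S$ depends only on $\pi_n(g)$ and is therefore a cone set $C_A$. Picking $u_1\in S$ and any $w\in \mathcal{L}_N$ and setting $v:=u_1w$ (so $|v|=n+N$), \cref{proposition: mixing property} applied with $C_B:=\mathrm{St}_G(m)=\pi_m^{-1}(\{1\})$ yields
\[
\mu_G\bigl(B_S\cap \mathcal{T}_v^{-1}(\mathrm{St}_G(m))\cap \mathcal{T}_{u_1}^{-1}(\mathrm{st}_G(w))\bigr)=\mu_G(B_S)\cdot \frac{1}{|\pi_m(G)|}\cdot d^{-N},
\]
where the factor $d^{-N}$ is $\mu_G(\mathrm{st}_G(w))$ by level-transitivity. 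On this intersection $g$ fixes $u_1$, $g|_{u_1}$ fixes $w$ (so $g$ fixes $v$), and $g|_v$ acts trivially on its first $m$ levels; consequently $g$ fixes all $d^m$ descendants of $v$ at level $n+M$, forcing $X_{n+M}(g)\ge d^m>r$.

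Summing over the pairwise disjoint events $B_S$ with $|S|=r$ then gives
\[
\mu_G\bigl(\{X_{n+M}>r\}\cap\{X_n=r\}\bigr)\ge \bigl(d^N|\pi_m(G)|\bigr)^{-1}\mu_G(X_n=r),
\]
so I can take $\epsilon:=(d^N|\pi_m(G)|)^{-1}$, valid for every $n\ge 1$. Feeding this into \cref{lemma: Jones strategy} delivers $X_n\to 0$ almost surely, and since $F_G=\bigcap_{n\ge 1}\{X_n\ge 1\}$ we will conclude $\mathrm{FPP}(G)=\mu_G(F_G)=0$.

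The main subtlety I anticipate is arranging the events so that the pseudomixing identity actually applies: it requires the controlled vertex $v$ to sit at level at least $n+N$, not at the original level $n$. Directly trying to force $g|_{u_1}$ to be trivial does not match the shape of \cref{proposition: mixing property}; the trick is to go one step deeper to $v=u_1w$ and ask simultaneously that $g|_{u_1}$ fix $w$ and that $g|_v$ act trivially on its first $m$ levels, as this is exactly what the pseudomixing identity decomposes into three independent factors and what manufactures the $d^m>r$ fresh fixed vertices with a lower bound uniform in $n$ and~$S$.
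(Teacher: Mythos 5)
Your proposal is correct and essentially reproduces the paper's own proof: the martingale condition follows from part (i) of the definition of mixing via \cref{proposition: martingale characterization}, the pseudomixing identity of \cref{proposition: mixing property} is applied with $C_B=\mathrm{St}_G(m)$ to manufacture $d^m>r$ forced fixed vertices with uniform lower-bound probability $\epsilon=(d^N\lvert\pi_m(G)\rvert)^{-1}$, and this is fed into \cref{lemma: Jones strategy}. The only (cosmetic) differences are that you partition $\{X_n=r\}$ by the fixed subset $S\subseteq\mathcal{L}_n$ rather than by the level-$n$ pattern $a\in\pi_n(G)$ as the paper does, and you insist on $d^m>r$ strictly --- the paper's choice $m=\lceil\log_d r\rceil$ gives only $d^m\ge r$ when $r$ is a power of $d$, so your version quietly repairs a small inaccuracy.
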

\begin{proof}

By the definition of mixing, the level stabilizers satisfy the assumption in \cref{proposition: martingale characterization}, so the fixed-point process of $G$ is a martingale. 

Let us now show that condition (ii) of \cref{lemma: Jones strategy} holds. Let $r\ge 1$ and set $m:=\lceil \log_d r\rceil$. Let us further consider the delay constant $N:=N(m)$ of~$G$. It is enough to show that there exists $\epsilon:=\epsilon(r)>0$ only depending on~$r$ such that
$$\mu_G(X_{n+(N+m)} > r\mid  X_n = r)\ge \epsilon$$
for every $n\ge 1$. Indeed,
$$\mu_G(X_{n+(N+m)} = r\mid  X_n = r)\le 1-\mu_G(X_{n+(N+m)} > r\mid  X_n = r)\le 1-\epsilon$$
for every $n\ge 1$.

We define the set $A_{n,r}\subseteq \pi_n(G)$ as
$$A_{n,r}:=\{a\in \pi_n(G):X_n(a)=r\}.$$
Then 
\begin{align*}
\mu_G(X_{n+(N+m)} > r\mid  X_n = r ) &= \frac{\mu_G(X_{n+(N+m)} > r \cap X_n = r )}{\mu_G(X_n = r)}\\
&=\frac{ \mu_G(X_{n+(N+m)} > r \cap C_{A_{n,r}} )}{\mu_G(C_{A_{n,r}})}\\
&= \frac{\sum_{a \in A_{n,r}} \mu_G(X_{n+(N+m)} > r \cap C_a )}{\mu_G(C_{A_{n,r}})}.
\end{align*}
As $r\ge 1$, each element $a$ fixes at least one vertex $u_a$ at level $n$. Let $v_a$ be a vertex~$N$ levels below $u_a$ for each $a\in A_{n,r}$. Then 
\begin{align*}
\mu_G(X_{n+(N+m)} > r\mid  X_n = r) \geq \frac{\sum_{a \in A_{n,r}} \mu_G(C_a\cap \mathcal{T}_{v_a}^{-1}(\mathrm{St}_G(m))\cap \mathcal{T}_{u_a}^{-1}(\mathrm{st}_G(v_a)))}{\mu_G(C_{A_{n,r}})}.
\end{align*}
Lastly, as $G$ is mixing and $|v_a|= n+N$, we may apply the mixing condition in \cref{proposition: mixing property} with $C_B=\mathrm{St}_G(m)$ to obtain 
\begin{align*}
\mu_G(X_{n+(N+m)} &> r\mid  X_n = r) \\
&\geq \frac{\sum_{a \in A_{n,r}} \mu_G(C_a\cap \mathcal{T}_{v_a}^{-1}(\mathrm{St}_G(m))\cap \mathcal{T}_{u_a}^{-1}(\mathrm{st}_G(v_a)))}{\mu_G(C_{A_{n,r}})}\\
&=\frac{\sum_{a \in A_{n,r}} \mu_G(C_a)\cdot \mu_G(\mathrm{St}_G(m))\cdot \mu_G(\mathrm{st}_G(v_a))}{\mu_G(C_{A_{n,r}})}\\
&=\frac{\mu_G(C_{A_{n,r}})\cdot |\pi_m(G)|^{-1}\cdot d^{-N}}{\mu_G(C_{A_{n,r}})}\\
&=|\pi_m(G)|^{-1}\cdot d^{-N}=:\epsilon(m),
\end{align*}
where we used level-transitivity of $G$ in the equality $\mu_G(\mathrm{st}_G(v_a))=d^{-N}$. As $\epsilon(m)$ only depends on $m$, and thus it only depends on $r$, this concludes the proof.
\end{proof}

\section{Geometric iterated Galois groups of rational functions}
\label{section: Geometric iterated Galois groups of rational functions}

This section is devoted to proving \cref{Theorem: mixing rational functions}. First, we give the main properties of geometric iterated Galois groups needed here. Then, we generalize the exceptional sets of Makarov and Smirnov, and show the critical role they play in applying the commutator trick to geometric iterated Galois group. Finally, we conclude the section with the proof of \cref{Theorem: mixing rational functions}.

\subsection{Geometric iterated Galois groups}

Let $K$ be a field and $f\in K(x)$ a rational function of degree $d \geq 2$. As the geometric iterated Galois group of $f$ is defined on a separable closure of $K$, we shall assume without loss of generality that~$K$ is separably closed. Consider $t$ be a transcendental element over $K$ and fix $K(t)^{\mathrm{sep}}$ a separable closure of $K(t)$. Let
$$T_t := \bigsqcup_{n \geq 0} f^{-n}(t)$$
be the tree of preimages of $t$, where $v \in f^{-n}(t)$ is adjacent to $w \in f^{-(n+1)}(t)$ if and only if $f(w) = v$. As $t$ is transcendental, the pair $(T_t, t)$ is a $d$-regular rooted tree. Moreover, the Galois action of $\Gal(K(t)^{\mathrm{sep}}/K(t))$ on each $f^{-n}(t)$ induces an action by automorphisms on~$T_t$, which defines the \textit{arboreal representation} 
$$\rho: \Gal(K(t)^{\mathrm{sep}}/K(t)) \rightarrow \Aut(T_t).$$
The image of $\rho$, denoted $G_\infty(K,f,t)$, is the \textit{geometric iterated Galois group} of $f$. The kernel of the composition $\pi_n\circ \rho$ is the subgroup
$$\mathrm{Gal}(K(t)^{\mathrm{sep}}/K_n(f,t)),$$
where $K_n(f,t) :=  K(f^{-n}(t))$. The finite extension $K_n(f,t)/K(t)$ is Galois; thus, by Galois theory, the group
$$G_n(K,f,t) := (\pi_n\circ\rho) \big(\Gal(K(t)^\mathrm{sep}/K(t))\big)$$ 
acts faithfully on the truncated tree $T_t^n$. Furthermore $K_n(f,t) \subseteq K_{n+1}(f,t)$ for all $n \geq 1$, so we may define
$$K_\infty(f,t) := \bigcup_{n\ge 1}K_n(f,t).$$
What is more
$$\ker(\rho) = \Gal(K(t)^{\mathrm{sep}}/K_\infty(f,t)),$$ and hence
$$G_\infty(K,f,t) \cong \Gal(K_\infty(f,t)/K(t)).$$
Moreover, by Galois correspondence, 
$$G_\infty(K,f,t) \cong \varprojlim_n G_n(K,f,t).$$
It can be shown that $G_\infty(K,f,t)$ is a level-transitive closed subgroup of $\mathrm{Aut}(T_t)$; see \cite[Section 1.1]{JonesArboreal}.

Now, we construct a fractal action of $G_\infty(K,f,t)$ on $T$. For that, we recall some definitions and results proved by Adams and Hyde in \cite[Section 3]{OpheliaHyde}. Given $t$ and $t'$ transcendental over $K$, a \textit{path} is a $K$-isomorphism $\lambda: K(t)^{\mathrm{sep}} \rightarrow K(t')^{\mathrm{sep}}$ such that $t^\lambda = t'$. The existence of paths is justified by Zorn's lemma. It is not hard to see that $\lambda$ induces a $K$-isomorphism between $K_\infty(f,t)$ and $K_\infty(f,t')$. 

Let $X = \set{1, \dots, d}$ and choose a bijection $\delta: X \rightarrow f^{-1}(t)$ mapping $i \mapsto t_i$. If $t$ is transcendental over $K$, then any $t_i \in f^{-1}(t)$ is also transcendental over $K$. Choose $\Lambda = \set{\lambda_i}_{i \in X}$ a set of paths such that $\lambda_i$ is a path from $t$ to $t_i$. If $g$ is an element in $G_\infty(K,f,t)$ and we have elements $t_i, t_j \in f^{-1}(t)$ such that $t_i^g = t_j$, then the restriction of $g$ to $K_\infty(f,t_i)$ is a $K(t)$-isomorphism from $K_\infty(f,t_i)$ to $K_\infty(f,t_j)$. Using the paths $\lambda_i$ and $\lambda_j$, we obtain an element $$g_i := \lambda_i \cdot g|_{K_\infty(f,t_i)} \cdot \lambda_j^{-1} \in G_\infty(K,f,t).$$
Therefore, the set of paths $\Lambda$ induces an embedding
\begin{align}
\label{equation: embedding Goo in Aut(T)}
G_\infty(K,f,t) & \hookrightarrow G_\infty(K,f,t)^d \rtimes G_1(K,f,t) \\
g & \mapsto (g_1, \dots, g_d) \pi_1(g).\nonumber
\end{align}
The set of paths $\Lambda$, together with $\delta$, induce a labeling $\phi: T_t \rightarrow T$ for $T$ the $d$-regular rooted tree obtained from the free monoid $X^\NN$. By \cref{equation: embedding Goo in Aut(T)}, the pair $(G_\infty(K,f,t), \phi)$ is self-similar. Moreover, by \cite[Theorem 7]{Ophelia}, the map 
\begin{align}
\label{equation: Goo fractal}
\st_{G_\infty(K,f,t)}(t_i) & \rightarrow G_\infty(K,f,t) \\
g & \mapsto g_i. \nonumber
\end{align}
is surjective for every $t_i\in f^{-1}(t)$ and thus the pair $(G_\infty(K,f,t), \phi)$ is fractal.

Let us write $C_f$ for the set of critical points of $f$ and $P_f:=\bigcup_{n\ge 1}f^n(C_f)$ for the post-critical set of $f$. We also write $e_f(z)$ for the local degree of $f$ at $z\in \mathbb{P}^1_K$. 

Before giving a description of the generators of the group $G_\infty(K,f,t)$, we need some definitions. Note that $K(t)$ is the field of fractions of the polynomial ring $K[t]$. Let us fix an integral separable closure of $K[t]$, i.e. let $K[t]^{\mathrm{sep}}$ be a ring generated by $K[t]$ and the roots of all monic separable polynomials with coefficients in $K[t]$.

Now, we write $\mathcal{O}_\infty:=K_\infty(f,t)\cap K[t]^{\mathrm{sep}}$ and similarly $\mathcal{O}_n:=K_n(f,t)\cap K[t]^{\mathrm{sep}}$ for every $n\ge 1$. Let us fix a prime $\mathfrak{P}$ in $\mathcal{O}_\infty$ above $(t-p)$ for some $p\in P_f$ and write $\mathfrak{P}_n:=\mathfrak{P}\cap K_n(f,t)$ for the unique prime in $\mathcal{O}_n$ below $\mathfrak{P}$. The \textit{decomposition subgroup} $D_\mathfrak{P}\le G_\infty(K,f,t)$ is precisely the stabilizer of $\mathfrak{P}$ in $G_\infty(K,f,t)$. The \textit{inertia subgroup} $I_\mathfrak{P}\le D_\mathfrak{P}$ consists of those automorphisms which restrict to the identity in the residue field $\mathcal{O}_\infty/\mathfrak{P}$. We define analogously $D_{\mathfrak{P}_n}$ and $I_{\mathfrak{P}_n}$ for any $n\ge 1$.

Based on a classical result of Grothendieck \cite{GrothendickRaynaud1971}, we obtain a description of the topological generators of $G_\infty(K,f,t)$ in terms of the post-critical points of $f$:

\begin{proposition}
Let $K$ be a separably closed field and $t$ transcendental over~$K$. Let $f\in K(x)$ be a rational function of degree $d\ge 2$ and assume that either $\mathrm{char}(K)=0$ or $\mathrm{char}(K)$ does not divide the local degree of any point in $\PP^1_K$. 
Then
\begin{enumerate}[\normalfont(i)]
\label{proposition: action inertia generators}
\item $G_\infty(K,f,t)$ is topologically generated by $\{g_p: p \in P_f\}$, where each $g_p$ corresponds to a topological generator of an inertia subgroup $I_\mathfrak{P}$ for some prime~$\mathfrak{P}$ in $\mathcal{O}_\infty$ above $(t-p)$. Moreover, there exists an ordering of the topological generators such that
$$\prod_{p \in P_f} g_p  = 1.$$

\item For each $t_i \in f^{-1}(t)$, we have $\mathfrak{P} \cap K(t_i) = (t_i-q)$ for some $q \in f^{-1}(p)$, and the $g_p$-orbit of $t_i$ is a cycle of length $e_f(q)$.

\item Let $t_i$ and $q$ as in $\mathrm{(ii)}$ and let $\mathcal{Q}$ be a prime in $\mathcal{O}_\infty$ above $(t-q)$. Then, there exists an element $g$ in $G_\infty(K,f,t)$ conjugated to $g_p$ in $G_\infty(K,f,t)$ such that $g^{e_f(q)}$ fixes $t_i$ and $(g^{e_f(q)})_i$ generates topologically the inertia subgroup~$I_\mathcal{Q}$.
\end{enumerate}
\end{proposition}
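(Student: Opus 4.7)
The plan is to obtain the proposition as a concrete incarnation of Grothendieck's structure theorem for the tame étale fundamental group of a punctured projective line. The starting observation is that every finite layer $K_n(f,t)/K(t)$ in the tower is a tame cover of $\mathbb{P}^1_K$ ramified only above $P_f$: under the hypothesis on $\mathrm{char}(K)$ the polynomial $f^n(x)-t \in K(t)[x]$ is separable, and the branch locus of the induced map $x \mapsto f^n(x)$ on $\mathbb{P}^1_K$ is the set of critical values of $f^n$, which is contained in $\bigcup_{k=1}^{n} f^k(C_f) \subseteq P_f$. Thus $G_\infty(K,f,t)$ is a continuous quotient of the tame étale fundamental group of $\mathbb{P}^1_K \setminus P_f$, based at a geometric point corresponding to $t$.

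For part (i), I would appeal to the structure theorem for this tame fundamental group \cite{GrothendickRaynaud1971}: in characteristic zero it is the profinite completion of the topological fundamental group of the punctured Riemann sphere, while in positive characteristic it is the maximal pro-(prime to $\mathrm{char}(K)$) quotient of the same. In either case it admits the presentation
$$\Big\langle \gamma_p,\ p \in P_f \;\Big|\; \prod_{p \in P_f} \gamma_p = 1 \Big\rangle$$
for a suitable cyclic ordering of $P_f$, where each $\gamma_p$ is represented by a small loop around the puncture $p$. Under the Galois--fundamental group dictionary, such a loop corresponds to a topological generator $g_p$ of the inertia subgroup $I_\mathfrak{P}$ at some prime $\mathfrak{P}$ of $\mathcal{O}_\infty$ above $(t-p)$; taking continuous images in $G_\infty(K,f,t)$ preserves both the generating set and the single relation.

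For part (ii), the argument is local and classical. The subfield $K(t_i) \subseteq K_1(f,t)$ is isomorphic to $K(t)$ via $t = f(t_i)$, and the primes of $K(t_i)$ above $(t-p)$ correspond bijectively to the roots of $f(x)-p$, that is, to the elements $q \in f^{-1}(p)$, each such prime being of the form $(t_i-q)$ with ramification index $e_f(q)$. Restricting $\mathfrak{P}$ to $K(t_i)$ identifies one such $q$. Now $g_p$, restricted to $G_1(K,f,t)$, generates the inertia subgroup of $\mathfrak{P}_1 := \mathfrak{P} \cap K_1(f,t)$ over $(t-p)$, which is cyclic in the tame case, and the orbit-stabilizer theorem applied to its action on $f^{-1}(t)$ shows that the orbit of $t_i$ has length equal to the ramification index of $\mathfrak{P}_1 \cap K(t_i)$ over $(t-p)$, namely $e_f(q)$.

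For part (iii), I would trace $g_p^{e_f(q)} \in \st_{G_\infty(K,f,t)}(t_i)$ through the fractal embedding in \cref{equation: embedding Goo in Aut(T)}. Its restriction to $K_\infty(f,t_i) \subseteq K_\infty(f,t)$ is an element of $\Gal(K_\infty(f,t_i)/K(t_i))$, and transport by the path $\lambda_i$ identifies it with the section $(g_p^{e_f(q)})_i \in G_\infty(K,f,t)$. Pulling $\mathfrak{P}$ back along $\lambda_i$ yields a prime of $\mathcal{O}_\infty$ above $(t-q)$ whose inertia subgroup is generated topologically by $(g_p^{e_f(q)})_i$; since any two primes of $\mathcal{O}_\infty$ above $(t-q)$ are conjugate under $G_\infty(K,f,t)$, a conjugation matches this prime with the preassigned $\mathcal{Q}$, absorbing the discrepancy into the conjugate $g$ of $g_p$. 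The main obstacle throughout is this bookkeeping: the statement of (iii) is intrinsically Galois-theoretic, whereas its phrasing refers to the concrete self-similar action fixed by the choice of paths $\Lambda$, and the conjugation in (iii) is the mechanism that reconciles the two.
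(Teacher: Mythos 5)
Your proof is correct and follows essentially the same route as the paper's: Grothendieck's tame structure theorem for (i), Dedekind--Kummer plus an inertia--orbit count for (ii), and the path-conjugation/section argument for (iii). Two small remarks. First, when $f$ is post-critically infinite the set $P_f$ is infinite, so the phrase ``continuous quotient of the tame \'etale fundamental group of $\mathbb{P}^1_K \setminus P_f$'' needs a gloss; the paper instead works level by level with the finite sets $P_n := \bigcup_{i=1}^n f^i(C_f)$, shows $K_n(f,t)/K(t)$ is a tame cover ramified only over $P_n$, extracts generators with a single product relation at each finite level, and then passes to the inverse limit $G_\infty = \varprojlim_n G_n$. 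Second, for (ii) you invoke directly the Hilbert-theory identity that the $I_{\mathfrak{P}_1}$-orbit of $t_i$ has length exactly $e\bigl(\mathfrak{P}_1 \cap K(t_i) \mid (t-p)\bigr)$. This is true, but it is slightly more than a bare orbit--stabilizer count: it also uses the standard equality $|I_{\mathfrak{P}_1} : I_{\mathfrak{P}_1} \cap \mathrm{Gal}(K_1(f,t)/K(t_i))| = e(\mathfrak{q}\mid(t-p))$. The paper chooses to prove this equality by hand, deriving the upper bound on the orbit length from the factorization $f(x)-t \equiv f(x)-p = \prod_{q\in f^{-1}(p)}(x-q)^{e_f(q)}$ in the residue field modulo $\mathfrak{P}_1$ (any $\sigma\in I_{\mathfrak{P}_1}$ preserves the residue class of $t_i$), and the lower bound from orbit--stabilizer combined with Dedekind--Kummer. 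Your (iii) matches the paper almost verbatim; the ``discrepancy'' is absorbed exactly as you describe, by choosing $\widetilde{h}\in\mathrm{st}_{G_\infty(K,f,t)}(t_i)$ with prescribed section $(\widetilde{h})_i=h$, which exists by surjectivity of the section map (fractality).
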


\begin{proof}
Let us prove (i) first. We write $P_n:=\bigcup_{i = 1}^n f^i(C_f)$. Let $K(t)_{P_n}$ be the maximal tamely ramified extension of $K(t)$ in $K(t)^{\mathrm{sep}}$ which is ramified only above $P_n$. Note that every inertia subgroup $I_\mathfrak{P}$ is procyclic for any prime $\mathfrak{P}$ in $K(t)_{P_n}$ above $(t-p)$, as the extension is tamely ramified. It is a classical result of Grothendieck (see \cite[Corollary 3.9 Exposé X and Corollary 5.2 Exposé XII]{GrothendickRaynaud1971} or \cite[Theorem~4.9.1]{Szamuely2009}) that $$\mathrm{Gal}(K(t)_{P_n}/K(t)) = \overline{\langle \tau_p : \prod_{p \in P_n} \tau_p = 1 \rangle},$$
where each $\tau_p$ is a topological generator of an inertia subgroup $I_\mathfrak{P}$ for some prime~$\mathfrak{P}$ in $K(t)_{P_n}$ above $(t-p)$.

By the chain rule
$$e_{f^n}(z) = \prod_{i = 0}^{n-1} e_f(f^i(z))$$
for any $z\in \mathbb{P}^1_K$. Therefore, by hypothesis $\mathrm{char}(K) \nmid e_{f^n}(p)$ for $p\in P_n$, so for every $n\ge 1$ the extension $K_n(f,t)/K(t)$ is tamely ramified and it only ramifies over $P_n$. As $K_n(f,t)/K(t)$ is Galois, we obtain
\begin{align*}
G_n(K,f,t) = \mathrm{Gal}(K_n(f,t)/K(t))\cong \mathrm{Gal}(K(t)_{P_n}/K(t))/\mathrm{Gal}(K(t)_{P_n}/K_n(f,t))
\end{align*}
and consequently $G_n(K,f,t)$ is generated by the set $\{\tau'_p: p \in P_n\}$, where $\tau'_p$ is the projection of the element $\tau_p$. As this applies to every $n\ge 1$ and we have $$G_\infty(K,f,t)=\varprojlim_{n}G_n(K,f,t),$$
we may find a generating set $\{g_p:p\in P_f\}$ of $G_\infty(K,f,t)$ such that $g_p$ projects to~$\tau'_p$ for every $p \in P_f$. Furthermore, the element $g_p$ generates the inertia subgroup of~$I_\mathfrak{P}$ for some prime~$\mathfrak{P}$ in $K_\infty(f,t)$ above $(t-p)$ and 
$$\prod_{p \in P_f} g_p = 1$$ 
as wanted.

To prove (ii) we proceed as follows. Let $k_1:=\mathcal{O}_1/\mathfrak{P}_1$. Then
$$\prod_{t_i\in f^{-1}(t)}(x-t_i)=f(x)-t\equiv f(x)-p=\prod_{q\in f^{-1}(p)}(x-q)^{e_f(q)},$$
where the congruence is given by considering coefficients modulo $\mathfrak{P}_1$. Let $t_i\in f^{-1}(t)$. Then, by the unique factorization property in $k_1[x]$ we obtain that
\begin{align*}
    t_i\equiv q &\text{ mod }\mathfrak{P}_1
\end{align*}
for some $q\in f^{-1}(p)$. Therefore, the $I_\mathfrak{P}$-orbit of $t_i$ is contained in the set of preimages $t_j\in f^{-1}(t)$ such that
$$t_j\equiv q \text{ mod }\mathfrak{P}_1.$$
As there are precisely $e_f(q)$ such preimages $t_j$, it only remains to show that the $I_\mathfrak{P}$-orbit of $t_i$ is of size at least $e_f(q)$.  By the orbit-stabilizer theorem, this is equivalent to $$|I_{\mathfrak{P}}:\mathrm{Stab}_{I_{\mathfrak{P}}}(t_i)|=|I_{\mathfrak{P}_1}:\mathrm{Stab}_{I_{\mathfrak{P}_1}}(t_i)|\ge e_f(q).$$
Now, we have the congruence
$$(t_i-q)\equiv 0 \text{ mod }\mathfrak{P}_1,$$
so $(t_i-q)$ is below $\mathfrak{P}_1$. Applying the Dedekind-Kummer theorem to the extension $K(t_i)/K(t)$ yields 
$$(t_i-q)=\mathfrak{P}_1\cap K(t_i)\quad \text{and}\quad e((t_i-q)|(t-p))=e_f(q).$$
As $\mathrm{Stab}_{I_{\mathfrak{P}_1}}(t_i)=I_{\mathfrak{P}_1}\cap \mathrm{Gal}(K_1(f,t)/K(t_i))$ we get
\begin{align*}
    |I_{\mathfrak{P}_1}:\mathrm{Stab}_{I_{\mathfrak{P}_1}}(t_i)|&\ge e((t_i-q)|(t-p))=e_f(q).
\end{align*}

Let us prove (iii) now. By (ii), the element $g_p^{e_f(q)}$ fixes $t_i$. As $(t-p)$ has ramification index $e_f(q)$ over $(t_i - q)$ and $K_\infty(f,t_i) \subseteq K_\infty(f,t)$, the restriction $g_p^{e_f(q)}|_{K_\infty(f,t_i)}$ is a topological generator of the inertia subgroup of $\mathfrak{P} \cap K_\infty(f,t_i)$ which is above $(t_i - q)$. Conjugating by the path $\lambda_i$, we obtain that $(g_p^{e_f(q)})_i$ is a topological generator of the inertia subgroup of $\lambda_i^{-1}(\mathfrak{P} \cap K_\infty(f,t_i))$ which is above $(t - q)$. As the action of $G_\infty(K,f,t)$ on the prime ideals above $(t-q)$ is transitive, there exists $h \in G_\infty(K,f,t)$ such that $h$ maps $\lambda_i^{-1}(\mathfrak{P} \cap K_\infty(f,t_i))$ to~$\mathcal{Q}$ and consequently the element $h^{-1} (g_p^{e_f(q)})_i  h$ is a topological generator of the inertia subgroup $I_\mathcal{Q}$. Finally, using that the map $g\mapsto g_i$ is surjective, there exists $\widetilde{h} \in \mathrm{st}_{G_\infty(K,f,t)}(t_i)$ such that $(\widetilde{h})_i = h$. Set $g := \widetilde{h}^{-1} g_p \widetilde{h}$. Then $g^{e_f(q)} = \widetilde{h}^{-1} g_p^{e_f(q)} \widetilde{h}$ fixes~$t_i$ and $(g^{e_f(q)})_i$ is a topological generator of $I_\mathcal{Q}$.
\end{proof}

\begin{remark}
\label{remark: sections of generators}
In what follows, we shall make a slight abuse of notation and consider $G_\infty(K,f,t)$ as a subgroup of $\mathrm{Aut}(T)$ via the fractal action $(G_\infty(K,f,t),\phi)$. Then,
\cref{proposition: action inertia generators} yields a recursive method to compute the generators of the geometric iterated Galois group $G_\infty(K,f,t)$, a property that will be used in \cref{theorem: good generators are seen}. If $p \in P_f$ and we consider the prime $\mathfrak{P} \mid (t-p)$ such that $g_p$ corresponds to the topological generator of the inertia subgroup $I_\mathfrak{P}$. Then, as $G_\infty(K,f^n,t)=G_\infty(K,f,t)$,  applying \cref{proposition: action inertia generators}\textcolor{teal}{(ii)} to $f^n\in K(x)$, for an element $s \in f^{-n}(t)$, we have $\mathfrak{P} \cap K(s) = (s-q)$ for some $q \in f^{-n}(p)$. We write $v$ for the vertex in $T$ corresponding to the preimage $s\in f^{-n}(t)$ via the labeling $\phi$. We consider two cases:
\begin{enumerate}[\normalfont(i)]
\item If $q \notin P_f$, then the inertia subgroup of $I_\mathcal{Q}$ is trivial for any $\mathcal{Q}$ above $(t-q)$. Therefore  $$g_p^{e_{f^n}(q)}\in \mathrm{st}_{G_\infty(K,f,t)}(v)\le \mathrm{Aut}(T)$$ and $(g_p^{e_{f^n}(q)})|_v = 1$.
\item If $q \in P_f$, let $\mathcal{Q}$ be the prime ideal above $(t-q)$ whose inertia subgroup is topologically generated by $g_{q}$. By \cref{proposition: action inertia generators}\textcolor{teal}{(iii)}, there exists $g\in G_\infty(K,f,t)$ conjugate to $g_p$ in $G_\infty(K,f,t)$ fixing $v$ and such that
$$\overline{\langle (g^{e_{f^n}(q)})|_v\rangle} = \overline{\langle g_{q}\rangle}.$$
\end{enumerate}
\end{remark}

We shall see that many of the generators of the geometric iterated Galois group of a rational function satisfy the assumptions in \cref{lemma: commutator trick}. For that, we generalize the notion of exceptional sets defined by Makarov and Smirnov in \cite{MakarovSmirnov}. This is motivated by \cref{lemma: commutator trick}, as the main obstruction to apply the commutator trick in \cref{lemma: commutator trick} lies in this generalization of exceptional sets.

\subsection{Exceptional sets and generalizations}

Let us first recall the Riemann-Hurwitz theorem:

\begin{theorem}[Riemann-Hurwitz]
\label{theorem: Riemann-Hurwitz}
Let $K$ be a separably closed field and $f\in K(x)$ a rational function of degree $d \geq 2$. Then
$$2(d-1) = \sum_{z \in \mathbb{P}^1_K} (e_f(z) -1).$$ 
Moreover, if $f$ is a polynomial, then 
$$d-1 = \sum_{z \in K} (e_f(z) -1).$$ 
\end{theorem}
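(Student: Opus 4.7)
The plan is to invoke the standard Riemann-Hurwitz formula for a separable finite morphism of smooth projective curves (as found e.g.\ in Hartshorne IV.2.4 or Silverman's book on arithmetic dynamics) and specialize it to $\mathbb{P}^1_K \to \mathbb{P}^1_K$. Concretely, the rational function $f$ extends to a finite separable morphism $f: \mathbb{P}^1_K \to \mathbb{P}^1_K$ of degree $d$, and both the source and target have genus $0$. The general formula
$$2g_{\mathbb{P}^1} - 2 \;=\; d\bigl(2g_{\mathbb{P}^1} - 2\bigr) + \deg R_f$$
therefore collapses to $\deg R_f = 2(d-1)$, where $R_f$ is the ramification divisor. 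Under the running hypothesis of the paper on $\mathrm{char}(K)$ and the local degrees, every ramification is tame, so $R_f = \sum_{z \in \mathbb{P}^1_K}\bigl(e_f(z) - 1\bigr)\cdot [z]$, giving the first identity.

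For the polynomial case, I would observe that $f \in K[x]$ extends to a morphism $\mathbb{P}^1_K \to \mathbb{P}^1_K$ that is totally ramified at infinity: $f^{-1}(\infty) = \{\infty\}$ with $e_f(\infty) = d$. Separating the contribution from $\infty$ in the first identity yields
$$\sum_{z \in K}\bigl(e_f(z) - 1\bigr) \;=\; 2(d-1) - (d-1) \;=\; d - 1,$$
as claimed.

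There is no real obstacle here beyond citing the appropriate reference; the content is entirely classical and no new ideas are required. The only subtle point worth flagging is the implicit tameness hypothesis, which is not restated in the theorem but is built into the running assumptions on $\mathrm{char}(K)$ wherever the theorem is invoked later in the paper.
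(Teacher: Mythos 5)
Your proposal is correct and takes the same approach the paper implicitly adopts: the paper states this as a classical fact and gives no proof, so citing the genus-zero case of Riemann--Hurwitz and isolating the contribution of the totally ramified point at infinity for the polynomial case is exactly the intended reading. You are also right to flag the tameness caveat explicitly: as stated, the identity $2(d-1)=\sum_z (e_f(z)-1)$ is false in the presence of wild ramification (the different then exceeds $\sum (e_f(z)-1)$), and the paper only invokes the theorem under its running hypothesis that $\mathrm{char}(K)$ is zero or does not divide any local degree, which forces tameness.
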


In \cite{MakarovSmirnov}, Makarov and Smirnov introduced the notion of an exceptional set for a complex rational function $f$. A set $\Sigma\subseteq P_f\subset \mathbb{P}_K^1$ is said to be \textit{exceptional} if 
$$\Sigma=f^{-1}(\Sigma)\setminus C_f.$$

Note that the union of exceptional sets is exceptional; thus the union of all the exceptional sets is exceptional and it is uniquely determined by $f$. In the following, the set $\Sigma_f$ will denote the unique maximal exceptional set for $f$.

We consider the following generalization of the exceptional set, where we allow the exceptional points to be both critical and post-critical. We say that a set $\Upsilon \subseteq P_f \subset \mathbb{P}_K^1$ is \textit{critically exceptional} if 
$$\Upsilon = f^{-1}(\Upsilon)\setminus ((C_f \cup P_f) \setminus \Upsilon).$$

As for exceptional sets, the union of critically exceptional sets is critically exceptional. Hence, we write $\Upsilon_f$ for the unique maximal critically exceptional set for~$f$. 

Let us define also
$$\Delta_f := \{p \in P_f: f^{-1}(p) \subseteq C_f \cup P_f\}\subset \mathbb{P}_K^1,$$
i.e. the set of post-critical points whose preimages are all critical or post-critical. Clearly 
$$\Sigma_f \subseteq \Upsilon_f \subseteq \Delta_f.$$

A simple application of the Riemann-Hurwitz formula shows that $\Upsilon_f$ and $\Delta_f$ satisfy the same size restrictions as $\Sigma_f$:

\begin{lemma}
\label{lemma: almost exceptional set}
Let $f\in K(x)$ be a rational function of degree $d\ge 2$. Then $$\#\Delta_f\le 4,$$
and if $f$ is a polynomial, we further get
$$\#(\Delta_f\cap K)\le 2.$$
Moreover $\#\Upsilon_f =  4$ if and only if
\begin{enumerate}[\normalfont(i)]
\item every critical point is of degree 2;
\item $\Sigma_f = \Upsilon_f = \Delta_f = P_f$ and
\item $f^{-1}(\Upsilon_f) = C_f \sqcup P_f$.
\end{enumerate}
Similarly, if $f$ is a polynomial  $\#(\Upsilon_f\cap K) =  2$ if and only if 
\begin{enumerate}[\normalfont(i)]
\item every critical point in $K$ is of degree 2;
\item $\Sigma_f = \Upsilon_f \cap K = \Delta_f \cap K = P_f \cap K$ and
\item $f^{-1}(\Upsilon_f \cap K) = (C_f \cap K) \sqcup (P_f \cap K)$. 
\end{enumerate}
\label{lemma: bound for Deltaf and Upsilonf}
\end{lemma}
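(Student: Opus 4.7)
My plan is to combine the Riemann--Hurwitz identity with the structural constraint $f^{-1}(\Delta_f)\subseteq C_f\cup P_f$ imposed by the definition of $\Delta_f$. First, I would observe that for each $p\in\Delta_f$, non-critical preimages contribute $0$ to the local ramification, so
$$d-\#f^{-1}(p)=\sum_{z\in f^{-1}(p)\cap C_f}(e_f(z)-1).$$
Summing over $p\in\Delta_f$ (whose preimage sets are pairwise disjoint) and invoking \cref{theorem: Riemann-Hurwitz} would give
$$d\,\#\Delta_f-\#f^{-1}(\Delta_f)\;\le\;\sum_{z\in C_f}(e_f(z)-1)\;=\;2(d-1). \tag{$\ast$}$$

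To complete the bound, I need an upper estimate for $\#f^{-1}(\Delta_f)$. Splitting $f^{-1}(\Delta_f)=(f^{-1}(\Delta_f)\cap C_f)\sqcup(f^{-1}(\Delta_f)\setminus C_f)$, the critical part is controlled by $\#C_f\le 2(d-1)$ from Riemann--Hurwitz. For the non-critical part, which lies in $P_f\setminus C_f$, the plan is to exploit the maximality of $\Upsilon_f$ as a critically exceptional subset of $\Delta_f$: adjoining to $\Upsilon_f$ a non-critical preimage of $\Upsilon_f$ that lies outside $\Upsilon_f$ yields another forward-invariant subset of $\Delta_f$, and (by the characterisation $\Upsilon\subseteq\Delta_f\Leftrightarrow\Upsilon$ critically exceptional) a strictly larger critically exceptional set, contradicting maximality. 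A careful count along these lines should yield $\#(f^{-1}(\Delta_f)\setminus C_f)\le\#\Delta_f$. Substituting into $(\ast)$ would then give $(d-1)\#\Delta_f\le 4(d-1)$, hence $\#\Delta_f\le 4$. I expect this maximality/counting argument to be the main obstacle, since pinning down exactly which non-critical preimages may be adjoined requires a delicate case analysis separating points of $\Delta_f\setminus\Upsilon_f$ from points of $P_f\setminus\Delta_f$.

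For the polynomial case, $\infty$ is totally ramified with $e_f(\infty)=d$, so the affine Riemann--Hurwitz identity reads $\sum_{z\in K}(e_f(z)-1)=d-1$ and $\#(C_f\cap K)\le d-1$. Since $f\in K[x]$ sends $K$-preimages into $K$, repeating the argument with $\Delta_f\cap K$ in place of $\Delta_f$ would yield $(d-1)\#(\Delta_f\cap K)\le 2(d-1)$, hence $\#(\Delta_f\cap K)\le 2$.

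Finally, I would treat the equality case $\#\Upsilon_f=4$ by tracing back through the chain of inequalities. Saturation of $(\ast)$ forces $C_f\subseteq f^{-1}(\Upsilon_f)$ with $\#C_f=2(d-1)$ and $e_f(z)=2$ for every $z\in C_f$; saturation of the non-critical preimage bound forces $f^{-1}(\Upsilon_f)=C_f\sqcup\Upsilon_f$ disjointly and $\Upsilon_f=\Delta_f$. Forward invariance of $\Upsilon_f$ together with $f(C_f)\subseteq\Upsilon_f$ then yields $P_f=\bigcup_{n\ge 1}f^n(C_f)\subseteq\Upsilon_f$, so $P_f=\Upsilon_f=\Delta_f$; and $\Sigma_f=\Upsilon_f$ follows because $\Upsilon_f\cap C_f=\emptyset$ (given by the disjointness $f^{-1}(\Upsilon_f)=C_f\sqcup\Upsilon_f$). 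The polynomial analogue $\#(\Upsilon_f\cap K)=2$ is entirely parallel, using the affine Riemann--Hurwitz identity in place of the rational one.
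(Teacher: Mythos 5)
Your inequality $(\ast)$ is exactly the paper's lower estimate (rearranged), and your overall strategy — combine $(\ast)$ with an upper bound $\#f^{-1}(\Delta_f)\le\#\Delta_f+\#C_f$ — is the right target. But the way you propose to produce that upper bound has a genuine gap.

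The decisive problem is your claim that $\#\bigl(f^{-1}(\Delta_f)\setminus C_f\bigr)\le\#\Delta_f$. This is false in general: a point $p\in\Delta_f$ can have several non-critical preimages $q_1,\dotsc,q_k\in P_f\setminus C_f$ none of which lie in $\Delta_f$ (each $q_i$ can have a preimage outside $C_f\cup P_f$), and their total number is not controlled by $\#\Delta_f$ alone. For a degree-$5$ polynomial with $f^{-1}(p)=\{c_1,q_1,q_2,q_3\}$, $e_f(c_1)=2$, and three further simple critical points $c_2,c_3,c_4$ sent to $q_1,q_2,q_3$ respectively, one gets $q_1,q_2,q_3\in P_f\setminus C_f$ and generically $q_i\notin\Delta_f$, so $\#\bigl(f^{-1}(\Delta_f\cap K)\setminus C_f\bigr)=3$ while $\#(\Delta_f\cap K)=1$. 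Your proposed justification also does not establish the claim: adjoining a non-critical preimage $q\in f^{-1}(\Upsilon_f)\setminus\Upsilon_f$ need not produce a critically exceptional set, because the result must still satisfy $f^{-1}(\Upsilon_f\cup\{q\})\subseteq C_f\cup P_f$, i.e.\ $q\in\Delta_f$, which is precisely the thing one does not know. Relatedly, the asserted equivalence ``$\Upsilon\subseteq\Delta_f\Leftrightarrow\Upsilon$ critically exceptional'' is not correct: critical exceptionality additionally requires $f(\Upsilon)\subseteq\Upsilon$, so $\Upsilon\subseteq\Delta_f$ is necessary but not sufficient (and the whole argument conflates $\Upsilon_f$, where maximality is available, with $\Delta_f$, which is the set you actually need to bound).

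The paper instead bounds $\#f^{-1}(\Delta_f)$ \emph{directly} by $\#(\Delta_f\cup C_f)$ via an injective map $\varphi\colon f^{-1}(\Delta_f)\to\Delta_f\cup C_f$ sending each $p$ to the element of $\Delta_f\cup C_f$ in the backward orbit of $p$ at minimal distance; injectivity is a short minimality argument. Crucially, a non-critical preimage $q\notin\Delta_f$ may land on a \emph{critical} point under $\varphi$, so the correct upper bound mixes $\Delta_f$ and $C_f$ and cannot be split into your two pieces. Your equality-case analysis and the polynomial refinement (using the affine Riemann--Hurwitz identity) would then go through much as you describe once the correct injectivity argument replaces the faulty count.
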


\begin{proof}
We prove the statement for rational functions, as the polynomial case follows from the same arguments applying the statement for polynomias in \cref{theorem: Riemann-Hurwitz}. Let us define a map
\begin{align*}
\varphi: f^{-1}(\Delta_f) & \to \Delta_f \cup C_f \\
p & \mapsto \varphi(p),
\end{align*}
where $\varphi(p)$ is an element both in the backward orbit of $p$ and in $\Delta_f \cup C_f$, which is at minimal distance from $p$. In other words, if $f^n(\varphi(p)) = p$, then $\varphi(p)$ satisfies that for any other element $q \in \Delta_f \cup C_f$ such that $f^m(q) = p$, then $m \geq n$. Such an element $\varphi(p)$ always exists as $f^{-1}(\Delta_f) \subseteq P_f \cup C_f$ and every post-critical point has a critical point in its backward orbit. As there might be different choices for the map $\varphi$, let us fix one for the remainder of the proof.

We claim that the map $\varphi$ is injective. Indeed, suppose that $\varphi(p_1) = \varphi(p_2) = q$ for two elements $p_1,p_2\in f^{-1}(\Delta_f)$. Then, there exist $n_1, n_2 \geq 0$ such that $f^{n_i}(q) = p_i$ for $i = 1,2$ and $n_1, n_2$ are minimal by the definition of $\varphi$. If $n_1= n_2$, clearly $p_1 = p_2$ and $\varphi$ is injective. Thus, let us assume now by contradiction, without loss of generality, that $n_1 < n_2$. As $p_1 \in f^{-1}(\Delta_f)$, then $f(p_1) = f^{n_1+1}(q) \in \Delta_f$. Hence $p_2 = f^{n_2 - (n_1+1)}(f(p_1))$, which contradicts the minimality of $n_2$. Therefore $\varphi$ is injective, which yields
\begin{align}
\# f^{-1}(\Delta_f) & \leq \#(\Delta_f \cup C_f)\leq \# \Delta_f + \# C_f \leq \#\Delta_f + 2(d-1).
\label{equation: upper bound preimage Deltaf}
\end{align}
On the other hand
\begin{align}
\label{equation: lower bound preimage Deltaf}
\# f^{-1}(\Delta_f) &= \sum_{p \in \Delta_f} \# f^{-1}(p) = \sum_{p \in \Delta_f} \left( d - \sum_{c \in f^{-1}(p) \cap C_f} (e_f(c) - 1) \right) \\
&=d \# \Delta_f - \sum_{c \in f^{-1}(\Delta_f) \cap C_f} (e_f(c) - 1) \notag \\
&\ge d \# \Delta_f - \sum_{c \in C_f} (e_f(c) - 1) \notag \\
&=  d \# \Delta_f - 2(d-1),\notag
\end{align}
where the last equality follows from Riemann-Hurwitz. 

Combining \textcolor{teal}{Inequalities (}\ref{equation: upper bound preimage Deltaf}\textcolor{teal}{)} and \textcolor{teal}{(}\ref{equation: lower bound preimage Deltaf}\textcolor{teal}{)}, we obtain
$$d \# \Delta_f - 2(d-1)\le \#\Delta_f+2(d-1),$$
and thus
$$\# \Delta_f \leq 4.$$

To prove the if and only if statement, note first that the equality $\# \Delta_f = 4$ holds if and only if all the inequalities in \textcolor{teal}{(}\ref{equation: upper bound preimage Deltaf}\textcolor{teal}{)} and \textcolor{teal}{(}\ref{equation: lower bound preimage Deltaf}\textcolor{teal}{)} are equalities, i.e. if and only if
\begin{enumerate}[(a)]
\item every critical point is of degree 2;
\item $\Delta_f$ and $C_f$ are disjoint;
\item $C_f \subseteq f^{-1}(\Delta_f)$ and
\item $\# f^{-1}(\Delta_f) = \# (\Delta_f \cup C_f)$. 
\end{enumerate}

Suppose first that $\# \Upsilon_f = 4$. Then (a) is precisely (i). To show (ii) and (iii) note first that $\Upsilon_f = \Delta_f$. Note that by definition of $\Upsilon_f$, we have $\Upsilon_f \subset f^{-1}(\Upsilon_f)$ and thus $f^n(\Upsilon_f) \subseteq \Upsilon_f$ for every $n \geq 1$. Combining this with (c), we obtain that $P_f \subseteq \Upsilon_f$ and consequently $\Upsilon_f = P_f$. By (b), and the fact that $\Upsilon_f = \Delta_f =  P_f$ we conclude that $C_f$ and $P_f$ are disjoint. Therefore, by the definition of $\Upsilon_f$ we get $\Upsilon_f = f^{-1}(\Upsilon_f) \setminus C_f$, and hence
$$f^{-1}(\Upsilon_f) = \Upsilon_f \sqcup C_f.$$
Thus $\Upsilon_f$ is exceptional and $\Sigma_f = \Upsilon_f$, establishing both (ii) and (iii).

Conversely, if $\Upsilon_f$ satisfies conditions (i)-(iii), then $\Upsilon_f=\Delta_f$ and (a)-(d) are also satisfied. Therefore $\#\Upsilon=\#\Delta_f=4$ concluding the proof.
\end{proof}

\subsection{Commutator trick}

Now, we prove the key observation, namely that generators corresponding to post-critical points not in~$\Upsilon_f$ may be seen from any level-stabilizer by projecting several levels further down in the tree. The following remark is used in the proof. We record it here for the convenience of the reader:
\begin{remark}
\label{remark: conjugates fix 1}
Let $g\in \mathrm{Aut}(T)$ be such that $$g^r\in \mathrm{st}(v)\quad\text{and}\quad g^r|_v=1$$
for some $r\ge 1$ and some $v\in \mathcal{L}_n$ with $n\ge 1$. Then for any $h\in \mathrm{Aut}(T)$, if we set $k:=h^{-1}gh$, we get
$$k^r\in \mathrm{st}(v^h)\quad\text{and}\quad k^r|_{v^h}=h^{-1}|_{v^h}\cdot h|_v=(h|_v)^{-1}\cdot h|_v=1.$$
In other words, any element $k$ conjugate to $g$ in $\mathrm{Aut}(T)$ satisfies that $k^r$ has trivial section at a vertex at level $n$ fixed by $k^r$.
\end{remark}

\begin{theorem}
\label{theorem: good generators are seen}
Let $K$ be a separably closed field and $f\in K(x)$ a rational function of degree $d \ge 2$. Let $G:=G_\infty(K,f,t)$ and assume that $\mathrm{St}_G(n)$ acts level-transitively on the subtrees rooted at level $n$ for any $n \geq 1$. Then, for any $n \geq 1$ and any $v \in T$ such that $|v| \geq n+4$, we have
$$\mathrm{St}_G(n)_v\ge \overline{\langle g_p : p \in P_f \setminus \Upsilon_f\rangle}.$$
\end{theorem}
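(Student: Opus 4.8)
The plan is to apply the commutator trick (\cref{lemma: commutator trick}) with $N=4$ separately to each topological generator $g_p$, $p\in P_f\setminus\Upsilon_f$ (the statement being vacuous if $P_f=\Upsilon_f$, so I assume $P_f\neq\Upsilon_f$). Since $G:=G_\infty(K,f,t)$ is fractal and our hypothesis on $\mathrm{St}_G(n)$ is exactly the one required there, it suffices to produce, for each such $p$, an element $g_0\in G$ and vertices $u,w\in T$ fixed by $g_0$ with $|u|\le|w|\le 4$, $g_0|_u=1$ and $\overline{\langle g_0|_w\rangle}=\overline{\langle g_p\rangle}$: the commutator trick then gives $g_0|_w\in\mathrm{St}_G(n)_v$ for all $n\ge 1$ and $|v|\ge n+4$, and as $\mathrm{St}_G(n)_v=\varphi_v\big(\mathrm{st}_G(v)\cap\mathrm{St}_G(n)\big)$ is a closed subgroup of the profinite group $G$, we deduce $g_p\in\overline{\langle g_0|_w\rangle}\subseteq\mathrm{St}_G(n)_v$. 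Intersecting over $p$ and using closedness once more yields the stated inclusion.

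The technical core is a combinatorial claim: for $p\in P_f\setminus\Upsilon_f$ there are integers $1\le m\le k\le 4$, the point $p':=f^k(p)$, and a point $z\in f^{-m}(p')\setminus(C_f\cup P_f)$ with $p\in f^{-k}(p')$ and $e_{f^m}(z)\mid e_{f^k}(p)$. Here one uses that $\#\Upsilon_f\le 3$ and $\#\Delta_f\le 3$ (consequences of \cref{lemma: almost exceptional set}, since $P_f\neq\Upsilon_f$), together with the observation that $\{f^i(p)\}_{i\ge 0}$ cannot be contained in $\Delta_f$ — otherwise it would be critically exceptional (it is forward invariant and all its preimages lie in $C_f\cup P_f$), forcing $p\in\Upsilon_f$. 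If some forward iterate escapes $\Delta_f$, let $k$ be least with $f^k(p)\notin\Delta_f$; then $p,\dots,f^{k-1}(p)$ are distinct elements of $\Delta_f$, so $k\le 4$, and $p':=f^k(p)$ has a preimage $z\notin C_f\cup P_f$ at depth $m=1$, with $e_f(z)=1$ since $z\notin C_f$. Otherwise $p\notin\Delta_f$ but $\{f^i(p)\}_{i\ge 1}\subseteq\Delta_f$, hence $\{f^i(p)\}_{i\ge 1}$ is critically exceptional and lies in $\Upsilon_f$; writing its eventual cycle as entered at index $a\ge 1$ with period $\pi$, so $a+\pi-1\le\#\Upsilon_f\le 3$, put $p':=f^a(p)$, $k:=a+\pi\le 4$ (going once around the cycle, $f^k(p)=p'$), and take $m:=a+1\le k$ with $z$ a preimage of $p$ lying outside $C_f\cup P_f$; then $z\in f^{-m}(p')$ and $e_{f^m}(z)=e_{f^a}(p)$ divides $e_{f^k}(p)=e_{f^a}(p)\cdot e_{f^\pi}(p')$.

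Given $k,m,p',z$ as in the claim, one builds $g_0$ from the recursive description of the generators in \cref{remark: sections of generators}. By part (ii) of that remark, applied to $f^k$ with $p$ in the role of a preimage of $p'$, there is $g\in G$ conjugate to $g_{p'}$, say $g=h^{-1}g_{p'}h$, such that $g^{e_{f^k}(p)}$ fixes the vertex $w\in\mathcal{L}_k$ corresponding to $p$ and $\overline{\langle (g^{e_{f^k}(p)})|_w\rangle}=\overline{\langle g_p\rangle}$; set $g_0:=g^{e_{f^k}(p)}$. By part (i), applied to $f^m$, the vertex $s'\in\mathcal{L}_m$ corresponding to $z$ is fixed by $g_{p'}^{e_{f^m}(z)}$ with trivial section there; since $e_{f^m}(z)\mid e_{f^k}(p)$, the element $g_{p'}^{e_{f^k}(p)}$ also fixes $s'$ with trivial section, and then \cref{remark: conjugates fix 1} gives that $g_0=h^{-1}g_{p'}^{e_{f^k}(p)}h$ fixes $u:=(s')^h\in\mathcal{L}_m$ with $g_0|_u=1$. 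The data $g_0,u,w$ is precisely what the first paragraph needs.

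The main obstacle is the combinatorial claim of the second paragraph — locating $p'$ and $z$ at levels at most $4$ with the matching local-degree divisibility. The delicate point is the case $p\notin\Delta_f$ whose forward orbit is trapped inside $\Upsilon_f$: there the escaping preimage $z$ sits one level below where $g_p$ naturally appears, so one is forced to route through the eventual cycle, and it is exactly the Riemann--Hurwitz bounds on the size and dynamical structure of $\Delta_f$ (and hence of $\Upsilon_f$) from \cref{lemma: almost exceptional set} that keep everything at level $\le 4$.
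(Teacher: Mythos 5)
Your proposal is correct and follows essentially the same route as the paper's proof: the same dichotomy (the forward orbit of $p$ eventually escaping $\Delta_f$ versus being trapped, in which case the tail $\{f^i(p)\}_{i\ge 1}$ is critically exceptional and lies in $\Upsilon_f$), the same use of \cref{remark: sections of generators} and \cref{remark: conjugates fix 1} to produce one vertex with trivial section and one whose section topologically generates $\overline{\langle g_p\rangle}$, the same chain-rule divisibility $e_{f^m}(z)\mid e_{f^k}(p)$, and the same application of \cref{lemma: commutator trick} with delay $N=4$. The only difference is cosmetic — you package the two cases into a single parametrized claim — and your appeal to $\#\Delta_f\le 3$ is precisely the counting the paper itself relies on to bound the escape time by $4$ in its first case.
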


\begin{proof}
We may assume that $P_f\setminus \Upsilon_f\ne \emptyset$, as otherwise there is nothing to prove. Let $p\in P_f\setminus \Upsilon_f$. Assume first that there exists $n \geq 1$ such that $f^n(p) \notin \Delta_f$. We write $q = f^n(p)$ for such a minimal $n$. By \cref{remark: sections of generators}, there exist $g\in G$ and a vertex $v \in \mathcal{L}_n$ such that $g$ is conjugate to~$g_q$ in $G$ and 
$$g^{e_{f^n}(p)}\in \mathrm{st}_G(v),\quad g^{e_{f^n}(p)}|_v = h_p\quad\text{and}\quad \overline{\langle h_p\rangle}=\overline{\langle g_p\rangle}\le G_.$$
As $q \notin \Delta_f$, it has a preimage outside $C_f \cup P_f$. Hence, by \cref{remark: sections of generators}, there exists $x \in \mathcal{L}_1$ fixed by $g_q$ such that $g_q|_x = 1$. Furthermore, by \cref{remark: conjugates fix 1}, there is another vertex $y\in \mathcal{L}_1$ fixed by $g$ and where $g|_y=1$. As $n$ was minimal, we have $\{f^i(p)\}_{i = 1}^{n-1} \subseteq \Delta_f$. Thus $n\le 4$ by \cref{lemma: almost exceptional set}. Then $g^{e_{f^n}(p)}$ and $h_p$ satisfy the assumptions in \cref{lemma: commutator trick} with $N=4$. 

Let us assume now that $f^{n}(p)\in \Delta_f$ for all $n\ge 1$. Then $I:=\{f^n(p)\}_{n\ge 1}$ is a critically exceptional set, as
$$f^{-1}(I)=I\setminus ((C_f\cup P_f)\setminus I).$$
In other words $I\subseteq \Upsilon_f$. Furthermore $p\notin \Delta_f$, as otherwise $p\in \Upsilon_f$ by the above argument.

Now, by \cref{lemma: almost exceptional set} we have $\#I\le \#\Upsilon_f\le 3$ as $P_f\setminus \Upsilon_f\ne \emptyset$. Thus, the point $p$ is strictly preperiodic. Let $1\le \ell\le 3$ be minimal such that $q:=f^{\ell}(p)\in \mathcal{C}$ for the unique cycle $\mathcal{C}$ in $I$. Let us also write $r:=\#\mathcal{C}\ge 1$. Clearly $\ell+r\le 3+1=4$. As $q=f^{\ell+r}(p)$, by \cref{remark: sections of generators}, there exist $g\in G$ conjugate to $g_q$ in $G$ and $v\in \mathcal{L}_{r+\ell}$ such that
$$g^{e_{f^{\ell+r}}(p)}\in \mathrm{st}_G(v),\quad g^{e_{f^{\ell+r}}(p)}|_v=h_p\quad \text{and}\quad \overline{\langle h_p\rangle}=\overline{\langle g_p\rangle}\le G.$$
Now, as $p\notin \Delta_f$, there is $z\in f^{-1}(p)\setminus (C_f\cup P_f)$. Thus $q=f^{1+\ell}(z)$, and by \cref{remark: sections of generators}, there is a vertex $u\in \mathcal{L}_{1+\ell}$ such that
$$g_q^{e_{f^{1+\ell}}(z)}\in \mathrm{st}_G(u)\quad \text{and}\quad g_q^{e_{f^{1+\ell}}(z)}|_u=1.$$
Hence, by \cref{remark: conjugates fix 1}, there is a vertex $w\in \mathcal{L}_{1+\ell}$ such that
$$g^{e_{f^{1+\ell}}(z)}\in \mathrm{st}_G(w)\quad \text{and}\quad g^{e_{f^{1+\ell}}(z)}|_w=1.$$
Note that $e_{f^{1+\ell}}(z)=e_{f^{\ell}}(p)$, and by the chain rule
$$e_{f^{\ell+r}}(p)=e_{f^{\ell}}(p)\cdot e_{f^{r}}(f^{\ell}(p))=e_{f^{1+\ell}}(z)\cdot e_{f^{r}}(f^{\ell}(p)),$$
so $e_{f^{1+\ell}}(z)$ divides $e_{f^{\ell+r}}(p)$. Therefore, as $1+\ell\le r+\ell\le 4$, the elements $g^{e_{f^{\ell+r}}(q)}$ and $h_p$ satisfy the assumptions in \cref{lemma: commutator trick} with $N=4$ in this case too. Thus \cref{lemma: commutator trick} yields $$\mathrm{St}_G(n)_v \ge \overline{\langle h_p : p \in P_f \setminus \Upsilon_f\rangle}= \overline{\langle g_p : p \in P_f \setminus \Upsilon_f\rangle}$$
for every $v\in T$ such that $|v|\ge n+4$.
\end{proof}

\begin{figure}[H]
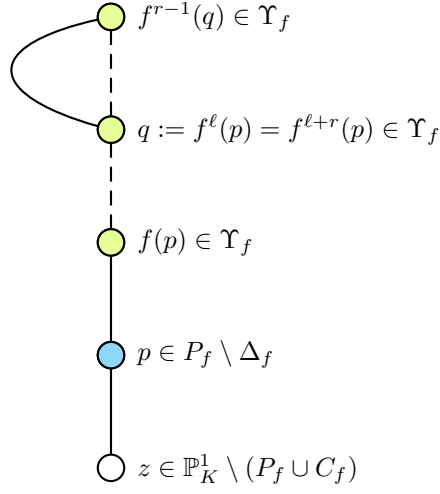

        \centering
  \def\dedge{\ncline[linestyle=dashed]}
      \psTree{\TC[radius=5pt,name=x1]}
            \psTree{\TC[radius=5pt,name=x2, edge=\dedge]}
        \psTree{\TC[radius=5pt,name=x3,edge=\dedge]}
            \psTree{\TC[radius=5pt,name=x4]}
            \TC[radius=5pt,name=x5]
         \endpsTree 
            \endpsTree 
        \endpsTree 
      \endpsTree
\uput{10pt}[0](x1){$f^{r-1}(q)\in \Upsilon_f$}%
\uput{10pt}[0](x2){$q:=f^{\ell}(p)=f^{\ell+r}(p)\in \Upsilon_f$}%
\uput{10pt}[0](x3){$f(p)\in \Upsilon_f$}%
\uput{10pt}[0](x4){$p\in P_f\setminus \Delta_f$}%
\uput{10pt}[-4](x5){$z\in \mathbb{P}_K^1\setminus(P_f\cup C_f)$}%
\pscurve[linecolor=black]{-}(x1)(-1.5,-0.7)(x2)%
\Ball[lime](x1)%
\Ball[lime](x2)%
\Ball[lime](x3)%
\Ball[cyan](x4)%
        \caption{A graphical representation of the $f$-orbit of $p\in P_f\setminus \Delta_f$ in the proof of \cref{theorem: good generators are seen}, where $z\in f^{-1}(p)$.}
        \label{figure: commutator trick}
\end{figure}

\begin{remark}
The delay constant $N=4$ may not be sharp, but it is convenient to state. A better bound for the delay constant can be computed explicitly for each~$f$ following the argument in the proof of \cref{theorem: good generators are seen}. However, unless the action on the first level is abelian, we will not be able to show $N=0$ (i.e. super strong fractality) using the commutator trick.
\end{remark}

\begin{remark}
\cref{theorem: good generators are seen} does not imply that $g_p\notin \mathrm{St}_G(n)_v$ for $p\in \Upsilon_f$. In fact, let us assume that there exists $\widetilde{p}\in P_f\setminus \Delta_f$ and $n\ge 1$ such that $f^n(\widetilde{p})=p$ and consider $q$ as in the proof of \cref{theorem: good generators are seen}.  If $e_{f^n}(\widetilde{p})\mid e_{f^{r}}(q)^m$ for some $m\ge 1$, then a similar argument to the one in the proof of \cref{theorem: good generators are seen} shows that 
$$\mathrm{St}_G(n)_v\ge \overline{\langle g_p\rangle}.$$
Observe that the delay constant may still be taken to be $N=4$, as $\widetilde{p}$ is a post-critical point in the backward orbit of $p$ not in $\Upsilon_f$ and at minimal distance to~$p$.
\end{remark}

\subsection{Mixing property}

As a corollary of \cref{theorem: good generators are seen}, we obtain \cref{Theorem: mixing rational functions}, i.e. the mixing property for a rational function $f$ satisfying the martingale assumption and $\#\Upsilon_f\le 1$:

\begin{proof}[Proof of \cref{Theorem: mixing rational functions}]
Let $n\ge 1$ and consider a vertex $v \in T$ such that $|v| \geq n + 4$. We write $G:=G_\infty(K,f,t)$. By \cref{proposition: martingale characterization}, the level-stabilizers $\mathrm{St}_G(n)$ act level-transitively on the subtrees rooted at level $n$ as the fixed-point process of $G$ is a martingale. Then, as $\# \Upsilon_f\le 1$, all the generators but possibly $g_p$ for $p\in \Upsilon_f$ are in $\mathrm{St}_G(n)_v$ by \cref{theorem: good generators are seen}. However, as
$$\prod_{q\in P_f}g_q=1$$
and $\mathrm{St}_G(n)_v$ is closed, we further obtain $g_p\in \mathrm{St}_G(n)_v$. This yields 
$$\mathrm{St}_G(n)_v=G.$$
In particular $G$ is mixing with delay constant $N=4$.
\end{proof}

Note that \textcolor{teal}{Theorems} \ref{Theorem: main result FPP intro} and \ref{Theorem: mixing rational functions} further extend \textcolor{teal}{Corollaries} \ref{Corollary: periodic points 1} and \ref{Corollary: periodic points 2} for rational functions satisfying the assumptions of \cref{Theorem: mixing rational functions}.

\section{The polynomial case}
\label{section: topological polynomials}

We conclude the paper with the proof of \cref{Theorem: main IMG intro}. We first show that monodromy at infinity yields the martingale condition and then we show that the exceptional case corresponds to the euclidean orbifolds of type $(2,2,\infty)$. 

\subsection{Mixing property}

Let us assume now that $f\in K[x]$ is a polynomial of degree $d \geq 2$, where $K$ is a separably closed field. As $f$ is a polynomial, then $\infty$ is a totally ramified fixed point, i.e. $e_f(\infty) = d$ and $f(\infty) = \infty$. In particular $\infty \in P_f$. Let us write $g_\infty$ for the topological generator which corresponds to a topological generator of the inertia subgroup over $\infty$. We have the following:

\begin{lemma}
\label{lemma: IGG polynomials martingale}
Let $K$ be a separably closed field and $f\in K[x]$ a polynomial of degree $d \ge 2$. If we write $G := G_\infty(K,f,t)$, we have:
\begin{enumerate}[\normalfont(i)]
\item for all $n \geq 1$, the power $g_\infty^{d^n} \in \mathrm{St}_G(n)$ and $g_\infty^{d^n}|_v^1$ acts like a $d$-cycle on the immediate descendants of $v$ for every $v\in \mathcal{L}_n$;
\item the fixed-point process of $G$ is a martingale.
\end{enumerate}
\end{lemma}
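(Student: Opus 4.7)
\medskip

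The plan is to derive both parts from Proposition \ref{proposition: action inertia generators}(ii) applied to iterates of $f$, exploiting the crucial fact that $\infty$ is a totally ramified fixed point.

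For part (i), I would first apply Proposition \ref{proposition: action inertia generators}(ii) to $f^n$ (using that $G_\infty(K,f^n,t)=G_\infty(K,f,t)$). Since $\infty$ is totally ramified with $f(\infty)=\infty$, the chain rule gives $e_{f^n}(\infty)=d^n$ and $(f^n)^{-1}(\infty)=\{\infty\}$. Therefore, for every $t_i\in f^{-n}(t)$, the reduction $t_i\bmod \mathfrak{P}$ must equal $\infty$, and the $g_\infty$-orbit of $t_i$ is a cycle of length $d^n$. In other words, $g_\infty$ acts on $\mathcal{L}_n$ as a single $d^n$-cycle, which immediately yields $g_\infty^{d^n}\in\St_G(n)$. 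To identify the sections $g_\infty^{d^n}|_v^1$, I would apply the same argument at level $n+1$: $g_\infty$ acts on $\mathcal{L}_{n+1}$ as a single $d^{n+1}$-cycle. After labeling $\mathcal{L}_{n+1}$ by $\mathbb{Z}/d^{n+1}\mathbb{Z}$ so that $g_\infty$ becomes $+1$, the partition of $\mathcal{L}_{n+1}$ by parents in $\mathcal{L}_n$ is a block system of size $d$ for this cyclic action. The only such block system consists of the cosets of the unique subgroup of order $d$, so the power $g_\infty^{d^n}$, acting as $+d^n$, preserves each block and cycles its $d$ elements; hence $g_\infty^{d^n}|_v^1$ is a $d$-cycle on the immediate descendants of $v$.

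For part (ii), I would invoke Proposition \ref{proposition: martingale characterization}, which says the fixed-point process of $G$ is a martingale if and only if $\St_G(n)$ acts level-transitively on each subtree rooted at level $n$, for every $n\ge 1$. To verify this, I would run the block-system argument at an arbitrary level $n+m$: $g_\infty$ acts on $\mathcal{L}_{n+m}$ as a single $d^{n+m}$-cycle, and the set of descendants of a fixed $v\in\mathcal{L}_n$ is a block of size $d^m$, hence a coset of the subgroup of order $d^m$. Then $g_\infty^{d^n}\in\St_G(n)$ preserves this block and acts on it as a $d^m$-cycle, giving transitivity of $\St_G(n)$ on every level of the subtree rooted at $v$. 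Since $v$ was arbitrary, this establishes the level-transitivity condition and hence (ii).

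The main obstacle here is really just the block-system identification: one has to make sure the partition of $\mathcal{L}_{n+m}$ by ``parent in $\mathcal{L}_n$'' is in fact a block system for the cyclic action of $\langle g_\infty\rangle$, and that it coincides with the canonical coset partition. The first point follows from $g_\infty$ being a tree automorphism (so it preserves the parent relation), and the second from the uniqueness of block systems of a given size for a cyclic group acting as a single cycle. Once this is in place, everything else is essentially a direct computation with the totally ramified fixed point at infinity.
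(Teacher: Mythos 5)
Your proof is correct, and it follows a genuinely different route from the paper's at the key step. Both you and the paper start by applying \cref{proposition: action inertia generators}(ii) to $f^n$ to conclude that $\pi_n(g_\infty)$ is a single $d^n$-cycle, hence $g_\infty^{d^n}\in\St_G(n)$. Where you diverge is in showing that the section of $g_\infty^{d^n}$ at each $v\in\mathcal{L}_n$ acts transitively (as a $d$-cycle on immediate descendants, and more generally as a full cycle on each deeper level). The paper invokes \cref{proposition: action inertia generators}(iii) (through \cref{remark: sections of generators}): $g_\infty^{d^n}|_v$ is conjugate in $G$ to a topological generator of the same inertia subgroup $\overline{\langle g_\infty\rangle}$, so in particular its first-level label is a $d$-cycle and the section is itself level-transitive on the subtree. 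You instead run a purely combinatorial block-system argument: since $g_\infty$ acts as a single $d^{n+m}$-cycle on $\mathcal{L}_{n+m}$ and tree automorphisms preserve the parent relation, the fibers over $\mathcal{L}_n$ form a block system for $\langle g_\infty\rangle\curvearrowright\mathcal{L}_{n+m}$; identifying the cycle with $\mathbb{Z}/d^{n+m}\mathbb{Z}$, these blocks are the cosets of the unique order-$d^m$ subgroup, and $g_\infty^{d^n}$ (acting as $+d^n$) cycles each block through all $d^m$ of its elements. Your argument is more elementary in that it only uses part (ii) of \cref{proposition: action inertia generators} plus basic facts about cyclic permutation groups, whereas the paper additionally uses the finer structural statement (iii) about inertia generators. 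The paper's route has the advantage of establishing the stronger self-similar recursion ``$g_\infty^{d^n}|_v$ is conjugate to $g_\infty$'', which is reused elsewhere (for instance in the computation of fixed-point proportions for Chebyshev-type orbifolds), while your block-system approach is a clean, self-contained way to get exactly what is needed here.
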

\begin{proof}
Applying \cref{proposition: action inertia generators} to $f^n$, we get $\pi_n(g_\infty)$ acts like a $d^n$-cycle on $\mathcal{L}_n$ as the point $\infty$ is totally ramified for the polynomial $f^n$. Moreover, for any $v\in \mathcal{L}_n$, the section $g_\infty^{d^n}|_v$ is conjugate to a generator of the subgroup $\langle g_\infty\rangle$ by \cref{proposition: action inertia generators} as $g_\infty^{d^n}$ fixes $v$. In particular, the label $g_\infty^{d^n}|_v^1$ must be a $d$-cycle, proving (i). Now, (ii) follows from \cref{proposition: martingale characterization} as $g_\infty$ is a level-transitive element by (i).
\end{proof}

As a result of \cref{lemma: IGG polynomials martingale}, we obtain an improvement of \cref{Theorem: mixing rational functions} for polynomials:

\begin{theorem}
\label{lemma: mixing polynomial}
Let $K$ be a separably closed field and a polynomial $f\in K[x]$ of degree $d \geq 2$. Set $t$ transcendental over $K$. If $\# (\Upsilon_f \cap K) \leq 1$, then $G_\infty(K,f,t)$ is mixing.
\end{theorem}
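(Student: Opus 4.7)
The plan is to reduce to Theorem~\ref{Theorem: mixing rational functions} and patch the remaining missing generators using the special structure of polynomials at infinity. Concretely, I will show that for every $n\ge 1$ and every $v\in T$ with $|v|\ge n+4$, the projection $\mathrm{St}_G(n)_v$ equals $G:=G_\infty(K,f,t)$; combined with the level-transitivity of $\mathrm{St}_G(n)$ on the subtrees rooted at level $n$, this gives that $G$ is mixing with delay constant $N=4$.

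First, observe that $\infty\in\Upsilon_f$ because $f^{-1}(\{\infty\})=\{\infty\}$ trivially satisfies the critically exceptional equation. Over a separably closed field every critical point of a polynomial lies in $K$, hence $P_f\setminus\{\infty\}\subseteq K$ and $\Upsilon_f\setminus\{\infty\}=\Upsilon_f\cap K$. The hypothesis therefore forces $\Upsilon_f\subseteq\{\infty,p_0\}$ for at most one $p_0\in K$. By \cref{lemma: IGG polynomials martingale}(ii) the fixed-point process of $G$ is a martingale, so \cref{proposition: martingale characterization} yields that $\mathrm{St}_G(n)$ acts level-transitively on the subtrees rooted at level $n$. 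Applying \cref{theorem: good generators are seen} then gives
$$\mathrm{St}_G(n)_v\;\supseteq\;\overline{\langle g_p:p\in P_f\setminus\Upsilon_f\rangle}$$
for every $v$ with $|v|\ge n+4$. The remaining task is thus to place $g_\infty$ (and then $g_{p_0}$, if it exists) into $\mathrm{St}_G(n)_v$.

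To produce $g_\infty$, I will exploit that $\infty$ is totally ramified under every iterate of $f$. Apply \cref{remark: sections of generators}(ii) with $p=q=\infty$ and the vertex $v$ at level $n+4$: since $f^{-(n+4)}(\infty)=\{\infty\}$, there exists $g\in G$ conjugate to $g_\infty$ in $G$ fixing $v$ such that $\overline{\langle g^{d^{n+4}}|_v\rangle}=\overline{\langle g_\infty\rangle}$. By \cref{lemma: IGG polynomials martingale}(i) we have $g_\infty^{d^n}\in\mathrm{St}_G(n)$; normality of the level-stabilizer transfers this to $g^{d^n}$, and therefore $g^{d^{n+4}}\in\mathrm{St}_G(n)\cap\mathrm{st}_G(v)$. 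Because $\mathrm{St}_G(n)_v$ is a closed subgroup of $G$ (as a continuous image of the compact set $\mathrm{St}_G(n)\cap\mathrm{st}_G(v)$), it contains the closure of the cyclic group generated by $g^{d^{n+4}}|_v$, hence all of $\overline{\langle g_\infty\rangle}$, and in particular $g_\infty$ itself. Finally, if $\Upsilon_f=\{\infty,p_0\}$, the relation $\prod_{p\in P_f}g_p=1$ from \cref{proposition: action inertia generators}(i) together with the closedness of $\mathrm{St}_G(n)_v$ recovers $g_{p_0}\in\mathrm{St}_G(n)_v$ as a word in the remaining generators, already placed in $\mathrm{St}_G(n)_v$.

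The principal obstacle is that \cref{theorem: good generators are seen} \emph{never} reaches $g_\infty$ since $\infty$ belongs to $\Upsilon_f$ for every polynomial, and with possibly two generators missing ($g_\infty$ and $g_{p_0}$) the product relation alone is insufficient. The structural input that resolves this is precisely the total ramification of $\infty$ at every level: the inertia at the unique preimage of $\infty$ at depth $n+4$ topologically regenerates $\overline{\langle g_\infty\rangle}$ from inside $\mathrm{St}_G(n)\cap\mathrm{st}_G(v)$, after which the at most one remaining generator $g_{p_0}$ is recovered by the global relation. This strengthens \cref{Theorem: mixing rational functions} in the polynomial setting by allowing one additional critically exceptional point besides the forced one at infinity.
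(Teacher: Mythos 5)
Your proposal is correct and follows essentially the same route as the paper's proof: obtain $g_\infty$ via the total ramification of $\infty$, obtain $g_p$ for $p\in P_f\setminus\Upsilon_f$ via \cref{theorem: good generators are seen}, and recover the at most one remaining generator from the product relation $\prod_{p\in P_f} g_p = 1$ and the closedness of $\mathrm{St}_G(n)_v$. You flesh out the step producing $g_\infty$ more carefully than the paper does (combining \cref{remark: sections of generators}(ii) with \cref{lemma: IGG polynomials martingale}(i) and the normality of $\mathrm{St}_G(n)$ explicitly), which is a genuine improvement in exposition, but the mathematical content is the same.
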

\begin{proof}
Let $n \geq 1$ and $v \in T$ any vertex such that $|v| \ge n + 4$. By \cref{lemma: IGG polynomials martingale}, we have 
$$\mathrm{St}_G(n)_v\ge \overline{\langle g_\infty\rangle}.$$
Thus, as $\# (\Upsilon_f \cap K) \leq 1$, there is at most one generator $g_p$ not seen in $\mathrm{St}_G(n)_v$ by \cref{theorem: good generators are seen}. Then \cref{Theorem: mixing rational functions} yields that $G_\infty(K,f,t)$ is mixing.
\end{proof}

\begin{remark}
    In fact, taking advantage of the extra restriction on the size of $\Upsilon_f\cap K$ and $\Delta_f\cap K$ in \cref{lemma: bound for Deltaf and Upsilonf}, it is easy to see that the delay constant may be improved from $N=4$ to $N= 2$ in \cref{lemma: mixing polynomial}.
\end{remark}

\subsection{Euclidean orbifolds of type $(2,2,\infty)$}

It is a well-known fact that exceptional complex polynomials with $\#\Sigma_f=2$ are linearly conjugate (up to sign) to Chebyshev polynomials; see \cite[Proposition 8.4]{JonesAMS} for an algebraic proof and \cite[Theorem~19.9]{MilnorBook} for a geometric proof. Here, for an arbitrary separably closed field~$K$, we obtain a characterization of polynomials with $\#\Sigma_f=2$ in terms of Thurston orbifolds. 

Given a post-critically finite rational function $f\in K(x)$ and a point $z\in \mathbb{P}_K^1$, we say that a periodic orbit $\{f^i(z)\}_{i=0}^{n-1}$ is \textit{super-attracting} if it contains a critical point of $f$. We define the \textit{Thurston orbifold} of $f$ as $(\mathbb{P}_K^1,\nu_f)$ for the map $\nu_f:\mathbb{P}_K^1\to \mathbb{N}\cup \{\infty\}$, where
$$\nu_f(z):=\mathrm{lcm}\{\nu_f(w)e_f(w) : w\in f^{-n}(z) \text{ for some }n\ge 1\},$$
unless $z$ belongs to a super-attracting periodic orbit, in which case $\nu_f(z)=\infty$. The finite tuple $(\nu_f(z))_{z\in P_f}$ is the \textit{type} of the orbifold. Furthermore, the \textit{Euler characteristic} $\chi$ of the orbifold $(\mathbb{P}_K^1,\nu_f)$ is
$$\chi(\nu_f):=2-\sum_{z\in P_f}\left(1-\frac{1}{\nu_f(z)}\right).$$
A rational function $f\in K(x)$ is said to be \textit{euclidean} if $\chi(\nu_f)=0$ and \textit{hyperbolic} if $\chi(\nu_f)<0$. 

We show now a strong relation between exceptional sets and orbifolds:

\begin{lemma}
\label{lemma: euclidean and exceptional}
Let $K$ be a separably closed field and $f\in K[x]$ be a polynomial. Then, the following are equivalent:
\begin{enumerate}[\normalfont(i)]
\item $\#\Sigma_f=2$;
\item $f$ has a euclidean orbifold of type $(2,2,\infty)$.
\end{enumerate}
\end{lemma}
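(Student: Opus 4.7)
The plan is to prove each implication separately, using throughout that for a polynomial the point $\infty$ is totally ramified fixed, so $\infty\in C_f\cap P_f$, $\nu_f(\infty)=\infty$, and $\infty\notin\Sigma_f$ (since $\Sigma_f\cap C_f=\emptyset$ follows immediately from the definition of exceptional set: a point in $\Sigma_f\cap C_f$ would be excluded from $f^{-1}(\Sigma_f)\setminus C_f=\Sigma_f$). The key elementary consequence of the defining lcm for $\nu_f$ is the divisibility
\[
\nu_f(w)\,e_f(w)\mid \nu_f(f(w))\qquad\text{for every }w\in\mathbb{P}^1_K.
\]

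For (ii)$\Rightarrow$(i), I would assume $P_f=\{p,q,\infty\}$ with $\nu_f(p)=\nu_f(q)=2$ and exploit this divisibility. Applied at a finite critical point $c$, it forces $e_f(c)\mid \nu_f(f(c))$; since $f^{-1}(\infty)=\{\infty\}$ excludes $f(c)=\infty$, we deduce $f(c)\in\{p,q\}$ and $e_f(c)=2$. Applied at $w=p$, the relation $2\,e_f(p)\mid \nu_f(f(p))$ combined with $f(p)\neq\infty$ forces $e_f(p)=1$, and likewise $e_f(q)=1$. Consequently $f^{-1}(\{p,q\})=\{p,q\}\sqcup(C_f\cap K)$, so $\{p,q\}$ is exceptional. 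Combined with $\Sigma_f\subseteq P_f\setminus\{\infty\}=\{p,q\}$ and the maximality of $\Sigma_f$, this gives $\Sigma_f=\{p,q\}$.

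For (i)$\Rightarrow$(ii), I would set $S:=f^{-1}(\Sigma_f)\cap C_f$ and observe the disjoint decomposition $f^{-1}(\Sigma_f)=\Sigma_f\sqcup S$, yielding $\sum_{s\in S}e_f(s)=2d-2$ by counting preimages with multiplicity. Combined with the polynomial Riemann--Hurwitz identity $\sum_{c\in C_f\cap K}(e_f(c)-1)=d-1$ and the inclusion $S\subseteq C_f\cap K$, the inequality $2d-2-|S|\leq d-1$ gives $|S|\geq d-1$, while each critical point contributing at least $1$ to the Riemann--Hurwitz sum gives $|C_f\cap K|\leq d-1$. Forcing equality throughout yields $C_f\cap K=S$, $|S|=d-1$, and $e_f(s)=2$ for every $s\in S$. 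This tight combinatorial identity, pinning down every finite critical local degree to be exactly $2$, is the crux of the argument and the step I expect to be the main obstacle; once it is in place, the rest is routine.

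Then $f(S)\subseteq\Sigma_f$ and $f(\Sigma_f)\subseteq\Sigma_f$ give $P_f=\Sigma_f\cup\{\infty\}=\{p,q,\infty\}$ by iteration. To compute $\nu_f(p)$ and $\nu_f(q)$, observe that any iterated preimage $w\in f^{-n}(p)$ either lies in $\{p,q\}$, in $S$ with $\nu_f(w)\,e_f(w)=2$, or is a non-critical point off $P_f$ contributing $1$ to the lcm. A short case check on the three possible orbit structures of $\{p,q\}$ under $f$ (both fixed, swapped as a $2$-cycle, or one preperiodic to the other) shows that critical preimages always appear in both backward orbits, so $\nu_f(p)=\nu_f(q)=2$. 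With $\nu_f(\infty)=\infty$, the orbifold is of type $(2,2,\infty)$ and $\chi(\nu_f)=-1+\tfrac{1}{2}+\tfrac{1}{2}+0=0$, so $f$ is euclidean.
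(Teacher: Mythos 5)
Your proof is correct and rests on the same two pillars as the paper's — Riemann--Hurwitz accounting for the structural direction and the divisibility $\nu_f(w)\,e_f(w)\mid\nu_f(f(w))$ for the orbifold direction — but it is packaged differently in a way worth noting. For (i)$\Rightarrow$(ii), the paper invokes \cref{lemma: almost exceptional set} (the general bound $\#(\Delta_f\cap K)\le 2$ for polynomials together with its equality case), whereas you redo the Riemann--Hurwitz count directly on $f^{-1}(\Sigma_f)$: because you exploit $\#\Sigma_f=2$ from the outset you avoid the general-purpose injection $\varphi\colon f^{-1}(\Delta_f)\to\Delta_f\cup C_f$ that the paper's auxiliary lemma constructs, making your argument self-contained and a bit shorter for this particular statement (at the cost of not reusing a lemma the paper needs elsewhere). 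Your (ii)$\Rightarrow$(i) argument also makes explicit the ramification-divisibility step that the paper's very terse ``By the orbifold type, every critical point is of degree 2 and $p_1$ and $p_2$ are not critical'' leaves implicit. One streamlining of your final $\nu_f$ computation: rather than a case analysis on the orbit type of $\{p,q\}$, it suffices to observe that each $x\in\Sigma_f\subseteq P_f$ has a critical point in its backward orbit, which must lie in $S=C_f\cap K$ since $\infty$ is totally invariant and $x$ is finite; this gives $2\mid\nu_f(x)$ uniformly, and the reverse bound follows because every term in the defining lcm is $1$, $2$, or $\nu_f(y)$ with $y\in\Sigma_f$, so the minimal solution is $\nu_f(p)=\nu_f(q)=2$.
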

\begin{proof}
Let us assume first that we have $\#\Sigma_f=2$. By \cref{lemma: almost exceptional set}, the polynomial~$f$ has exactly 2 post-critical points $p_1,p_2\in K$ which are not critical, and every critical point in $K$ is of degree 2. Furthermore, as $f$ is a polynomial the point at infinity is a super-attracting fixed point. Thus 
$$(\nu_f(p_1),\nu_f(p_2),\nu_f(\infty))=(2,2,\infty).$$

Let us assume now that $f$ has a euclidean orbifold of type $(2,2,\infty)$. Then $f$ has exactly 3 post-critical points $p_1,p_2,\infty$. Furthermore, as $f$ is a polynomial, the point~$\infty$ is a super-attracting fixed point. Then, every other critical point different to $\infty$ is mapped to $\{p_1,p_2\}$. By the orbifold type, every critical point is of degree 2 and $p_1$ and $p_2$ are not critical themselves. Then
\begin{align*}
\Sigma_f&=\{p_1,p_2\}.\qedhere
\end{align*}
\end{proof}

Geometric iterated Galois groups of polynomials with euclidean orbifolds of type $(2,2,\infty)$ have a dense dihedral subgroup:

\begin{lemma}
\label{lemma: euclidean implies dihedral}
Let $K$ be a separably closed field and $f\in K[x]$ be a polynomial of degree $d\ge 2$ with euclidean orbifold of type $(2,2,\infty)$. Then $G_\infty(K,f,t)$ is the closure in $\mathrm{Aut}(T)$ of the infinite dihedral group $\langle g_{p_1},g_{p_2}\rangle$.
\end{lemma}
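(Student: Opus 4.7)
The plan is to show that the inertia generators $g_{p_1}, g_{p_2}$ from \cref{proposition: action inertia generators}(i) act on $T$ as involutions, and that together with the product relation from the same proposition they generate $G_\infty(K,f,t)$ topologically as an infinite dihedral group.

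First I would extract the needed dynamical facts from the orbifold hypothesis via \cref{lemma: euclidean and exceptional} and \cref{lemma: almost exceptional set}: the $K$-post-critical set is $P_f \cap K = \{p_1, p_2\}$ with $p_1, p_2 \notin C_f$, every critical point in $K$ has local degree $2$, and $\Sigma_f = \{p_1, p_2\}$ is exceptional, so in particular $\{p_1, p_2\}$ is forward invariant under $f$ and $f(C_f \cap K) \subseteq \{p_1, p_2\}$.

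The crucial step is showing $g_{p_i}^2 = 1$ in $\Aut(T)$ for $i \in \{1, 2\}$. By \cref{proposition: action inertia generators}(ii) applied to the iterate $f^n$, the cycles of $g_{p_i}$ on $\mathcal{L}_n$ have lengths $e_{f^n}(q) = \prod_{j=0}^{n-1} e_f(f^j(q))$ for $q \in f^{-n}(p_i)$, and every factor is $1$ or $2$ by the orbifold hypothesis. I would argue that at most one factor can equal $2$: let $m$ be the minimal index with $f^m(q) \in \{p_1, p_2\}$, which exists since $f^n(q) = p_i$. For $m \le j < n$, forward invariance gives $f^j(q) \in \{p_1, p_2\} \subseteq \PP^1_K \setminus C_f$. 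For $j < m$, if $f^j(q) \in C_f$ then $f^{j+1}(q) \in f(C_f \cap K) \subseteq \{p_1, p_2\}$, forcing $j + 1 \ge m$. Thus only $f^{m-1}(q)$ can be critical, so $e_{f^n}(q) \in \{1, 2\}$ for all $q$ and $g_{p_i}^2 \in \St_G(n)$ for every $n$, whence $g_{p_i}^2 = 1$.

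Once $g_{p_1}, g_{p_2}$ are involutions, the product relation in \cref{proposition: action inertia generators}(i) expresses $g_\infty$ as a word in $g_{p_1}, g_{p_2}$, so $G_\infty(K,f,t) = \overline{\langle g_{p_1}, g_{p_2}\rangle}$ is a continuous quotient of $D_\infty$. To rule out a finite dihedral image, I would invoke \cref{lemma: IGG polynomials martingale}(i): the element $g_\infty$ acts as a $d^n$-cycle on $\mathcal{L}_n$ for every $n$, hence has infinite order. Since $g_\infty$ is a word in $g_{p_1}, g_{p_2}$, the product $g_{p_1} g_{p_2}$ must itself have infinite order, so $\langle g_{p_1}, g_{p_2}\rangle \cong D_\infty$. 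The main obstacle is the combinatorial step showing that no orbit of $f$ contains two critical points before reaching $\{p_1, p_2\}$; the remainder follows by direct assembly of previously established results.
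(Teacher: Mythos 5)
Your proof is correct and takes essentially the same route as the paper: express $g_\infty$ in terms of $g_{p_1}, g_{p_2}$ via the product relation, show these two generators are involutions, and conclude $D_\infty$ from the infinite order of the level-transitive element $g_\infty$. The only difference is cosmetic: you unpack the combinatorial verification that every cycle length $e_{f^n}(q)$ of $g_{p_i}$ is at most $2$ (using that an $f$-orbit meets $C_f$ at most once before entering $\{p_1,p_2\}$), whereas the paper cites \cref{remark: sections of generators} for the same conclusion.
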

\begin{proof}
Let $P_f=\{p_1,p_2,\infty\}$, where neither $p_1$ nor $p_2$ are critical by \cref{lemma: euclidean and exceptional}. By \cref{proposition: action inertia generators}, the group $G_\infty(K,f,t)$ is topologically generated by $\{g_{p_1},g_{p_2}\}$ as 
$$g_\infty^{-1}=g_{p_1}g_{p_2}.$$
Furthermore, both $g_{p_1}$ and $g_{p_2}$ are of order 2 by \cref{remark: sections of generators}, and its product~$g_{p_1}g_{p_2}$ is level-transitive (and thus of infinite order) by \cref{lemma: IGG polynomials martingale}\textcolor{teal}{(i)}. Hence
\begin{align*}
\langle g_{p_1},g_{p_2}\rangle\cong D_\infty&:=\langle x,y\mid x^2=y^2=1\rangle.\qedhere
\end{align*}
\end{proof}

Polynomials over $K$ with a euclidean orbifold of type $(2,2,\infty)$ belong to a family of polymomials generalizing Chebyshev polynomials. A polynomial $f\in K[x]$ is a \textit{standard twisted Chebyshev polynomial} of degree $d\ge 2$ if it satisfies
$$f\left(x+\frac{1}{x}\right)=x^d+\frac{\zeta}{x^d}$$
for some $\zeta$ satisfying $\zeta^{d-1}=1$. A polynomial $f$ linearly conjugate to a standard twisted Chebyshev polynomial is called a \textit{twisted Chebyshev polynomial}. Based on the results of Adams in \cite{Ophelia}, we prove the following

\begin{proposition}
\label{proposition: twisted chebyshev}
Let $K$ be a separably closed field and $f\in K[x]$ be a polynomial of degree $d\ge 2$ with euclidean orbifold of type $(2,2,\infty)$. Then $f$ is a twisted Chebyshev polynomial. 
\end{proposition}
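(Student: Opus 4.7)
The plan is to combine the dihedral structure of $G:=G_\infty(K,f,t)$ obtained in \cref{lemma: euclidean implies dihedral} with the correspondence between projection-invariant normal open subgroups of geometric iterated Galois groups and Galois dynamical pullbacks established by Adams in \cite[Theorem 17]{Ophelia}.

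First, I would identify a suitable projection-invariant open subgroup. Set $H:=\overline{\langle g_\infty\rangle}$; since $g_\infty^{-1}=g_{p_1}g_{p_2}$ is the rotation element of the infinite dihedral group generated by the involutions $g_{p_1},g_{p_2}$, the subgroup $H$ is a closed normal subgroup of index $2$ in $G$. It is open because $g_\infty$ is level-transitive by \cref{lemma: IGG polynomials martingale}. The key point is that $H$ is projection-invariant: by applying \cref{proposition: action inertia generators} to the iterate $f^n$, together with the fact that $\infty$ is totally ramified under every $f^n$, each section $g_\infty^{d^n}|_v$ at a vertex $v\in\mathcal{L}_n$ fixed by $g_\infty^{d^n}$ is $G$-conjugate to a power of $g_\infty$; since any $G$-conjugate of an element of $H$ still lies in the unique normal index-$2$ subgroup $H$ of $G$, this gives $H_v\le H$.

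Applying \cite[Theorem 17]{Ophelia} to $H$ then produces a degree-$2$ Galois dynamical pullback of $f$: rational maps $\pi,g\in K(x)$ with $\deg\pi=2$, $\deg g=d$, and a deck involution $\iota\in\mathrm{PGL}_2(K)$ satisfying $\pi\circ\iota=\pi$ and $f\circ\pi=\pi\circ g$. Riemann-Hurwitz forces $\pi$ to have exactly two branch points of local degree $2$, and the semiconjugacy makes their branch values orbifold singularities of $f$, hence equal to $\{p_1,p_2\}$. After a linear change of coordinates on the source of $\pi$ interchanging its two ramification points, $\iota$ becomes $z\mapsto 1/z$ and $\pi$ takes the twisted Joukowski form $\pi(z)=z+\zeta/z$ for some $\zeta\in K^\times$; a linear conjugation of $f$ on the target then identifies the critical values of $\pi$ with $\{p_1,p_2\}$.

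The map $g$ must commute with $\iota$ modulo the deck group and can only ramify over $\pi^{-1}(\infty)=\{0,\infty\}$; hence $g$ is a Laurent monomial, and after possibly composing with $\iota$ we may write $g(z)=\eta z^d$ for some $\eta\in K^\times$. Substituting into the semiconjugacy yields
$$f(z+\zeta/z)=\eta z^d+\zeta\eta^{-1}z^{-d},$$
and invariance of the right-hand side under the deck action $z\mapsto\zeta/z$, which is necessary for $f$ to descend as a polynomial in $z+\zeta/z$, forces the relation $\eta^2\zeta^{d-1}=1$. This is precisely the algebraic constraint characterizing a standard twisted Chebyshev polynomial, so $f$ is a twisted Chebyshev polynomial after the linear conjugations performed. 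The main technical obstacle I anticipate is the rigorous verification of projection-invariance for $H$ and the extraction of the precise pullback data from Adams's theorem: both rest on carefully tracking how the section decomposition interacts with the index-$2$ cyclic subgroup, after which the ramification analysis of $\pi$ and $g$ is essentially dictated by Riemann-Hurwitz and the orbifold type $(2,2,\infty)$.
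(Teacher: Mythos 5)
Your proposal is correct and follows essentially the same route as the paper: identify $H=\overline{\langle g_\infty\rangle}$ as the index-$2$ pro-cyclic projection-invariant open subgroup (your argument that $G$-conjugate sections of powers of $g_\infty$ must land back in $H$ because $H$ is the unique normal index-$2$ subgroup is exactly the observation the paper makes) and then invoke \cite[Theorem 17]{Ophelia}. The second half of your argument, reconstructing the twisted Chebyshev form by hand from the pullback data via Riemann--Hurwitz and the deck-involution analysis, is a plausible sketch of what Adams's classification already delivers, but the paper itself stops after citing that theorem and does not carry out (or need) that reconstruction.
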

\begin{proof}
By \cref{lemma: euclidean implies dihedral}, the generator at infinity $g_\infty$ generates topologically a pro-cyclic open subgroup $H$ of index 2 in $G_\infty(K,f,t)$, which contains all the elements of infinite order of $G_\infty(K,f,t)$. Then, we have $H_v=H$ for every $v\in T$, i.e. $H$ is a projection-invariant open subgroup of $G_\infty(K,f,t)$. Then, it follows from \cite[Theorem 17]{Ophelia} that $f$ must be a twisted Chebyshev polynomial.
\end{proof}

\subsection{Fixed-point proportion}

We now compute the fixed-point proportion of the geometric iterated Galois groups of polynomials with euclidean orbifold of type $(2,2,\infty)$.

\begin{proposition}
\label{proposition: fpp of 22}
Let $K$ be a separably closed field and $f\in K[x]$ be a polynomial of degree $d\ge 2$ with euclidean orbifold of type $(2,2,\infty)$. Then
$$\mathrm{FPP}(G_\infty(K,f,t))= \left \{ \begin{matrix} 
1/2 & \mbox{if $d$ is odd,} \\ 
1/4 & \mbox{if $d$ is even.}
\end{matrix}\right.$$
\end{proposition}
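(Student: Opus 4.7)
The plan is to use \cref{lemma: euclidean implies dihedral}, which says that $G := G_\infty(K,f,t)$ is the closure in $\Aut(T)$ of the infinite dihedral group $\langle g_{p_1}, g_{p_2}\rangle$. Set $g_\infty := (g_{p_1} g_{p_2})^{-1}$, which is level-transitive by \cref{lemma: IGG polynomials martingale}\textcolor{teal}{(i)}, and let $H := \overline{\langle g_\infty\rangle}$. Then $H$ is a procyclic closed subgroup of index $2$, so $G = H \sqcup g_{p_1}H$.

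First I would show that $\mu_G(F_G \cap H) = 0$: since $g_\infty$ acts as a $d^n$-cycle on $\mathcal{L}_n$, any $h \in H$ fixing a vertex at level $n$ must lie in $\mathrm{St}_G(n)$, and no nonidentity element of $H$ lies in every $\mathrm{St}_G(n)$. Next, I would observe that every element of $g_{p_1}H$ is an involution: this holds in the discrete dihedral group $\langle g_{p_1},g_{p_2}\rangle$ and extends to the closure because $h \mapsto (g_{p_1}h)^2$ is continuous on $H$ and vanishes on the dense subset $\langle g_\infty\rangle$. Therefore $\mathrm{FPP}(G) = \mu_G(F_G \cap g_{p_1}H)$.

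The heart of the argument is an explicit parameterization of $g_{p_1}H$. Using the level-transitive action of $g_\infty$ and choosing compatible basepoints, identify $\mathcal{L}_n$ with $\mathbb{Z}/d^n\mathbb{Z}$ so that $g_\infty$ acts as the translation $x \mapsto x+1$. The dihedral relation $g_{p_1} g_\infty g_{p_1}^{-1} = g_\infty^{-1}$ then forces $g_{p_1}$ to act at level $n$ as an affine involution $x \mapsto c_n - x$. A Haar-random $\sigma \in g_{p_1}H$ corresponds to a uniform parameter $c' = (c'_n) \in \varprojlim \mathbb{Z}/d^n\mathbb{Z}$ and acts at level $n$ as $x \mapsto c'_n - x$; the fixed vertices at level $n$ are then the solutions of $2x \equiv c'_n \pmod{d^n}$.

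For $d$ odd, $2$ is a unit modulo every $d^n$, so the congruence has the unique solution $c'_n/2$, and these solutions form a coherent sequence (division by $2$ commutes with reduction), hence define a single fixed end for every reflection; combined with Step~1 this yields $\mathrm{FPP}(G) = 1/2$. For $d$ even, the congruence is solvable at level $n$ iff $c'_n$ is even, which is the level-independent condition $c' \equiv 0 \pmod 2$ of Haar measure $1/2$; when solvable, the two solutions $c'_n/2$ and $c'_n/2 + d^n/2$ both reduce modulo $d^{n-1}$ to $c'_n/2$ (since $d^n/2 = (d/2)d^{n-1} \equiv 0 \pmod{d^{n-1}}$), so the fixed subtree is a spine with a dead-end branch at each level and yields exactly one fixed end. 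This gives $\mathrm{FPP}(G) = (1/2)(1/2) = 1/4$. The main obstacle I expect is the combinatorial analysis in the even case: simultaneously verifying the level-consistency of the ``has a fixed vertex'' event and the collapse of both level-$n$ fixed points to a single parent, which together rule out multiple fixed ends.
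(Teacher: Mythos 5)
Your proposal is correct and takes a genuinely different route from the paper. The paper's proof cites the external result [Proposition 8.5, JonesAMS], which reduces the computation to counting how many of the two generators $g_{p_1},g_{p_2}$ fix an end of $\partial T$; it then determines this count by a Galois-theoretic recursion through the sections of the inertia generators (\cref{remark: sections of generators}, \cref{proposition: action inertia generators}), using in the even case the detailed structure of the critical orbit (after relabeling, $f^{-1}(p_1)\subseteq C_f$ and $\{p_1,p_2\}\subseteq f^{-1}(p_2)$). By contrast, you do a direct and self-contained computation: split $G=H\sqcup g_{p_1}H$ with $H=\overline{\langle g_\infty\rangle}$, discard $H$ as measure-zero on $F_G$, note that $g_{p_1}H$ consists of involutions, equivariantly identify the tree with the inverse system $\mathbb{Z}/d^n\mathbb{Z}$ so that $H$ acts by translations and $g_{p_1}H$ by reflections $x\mapsto c'-x$, and then reduce the fixed-end event to the elementary congruence analysis of $2x\equiv c'_n\pmod{d^n}$. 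Your route avoids both the cited external result and the critical-orbit bookkeeping, at the cost of carrying out the pro-dihedral computation explicitly; the paper's route is shorter modulo the black box. One detail worth making explicit in a write-up: for $d$ even you need not only that the two level-$n$ fixed points share a common parent, but that this parent is itself a fixed vertex at level $n-1$ and that the resulting sequence of common parents is a coherent ray; this does hold, since $2\cdot\bigl(c'_n/2 \bmod d^{n-1}\bigr)\equiv c'_n\equiv c'_{n-1}\pmod{d^{n-1}}$, so the argument closes.
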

\begin{proof}
The proof is essentially the same as for iterated monodromy groups of complex Chebyshev polynomials of Jones in \cite[Proposition 1.2]{JonesAMS}. Indeed, by the proof of \cref{lemma: euclidean implies dihedral}, the group $G_\infty(K,f,t)$ has a dense dihedral subgroup $\langle g_{p_1},g_{p_2}\rangle$. Then, \cite[Proposition 8.5]{JonesAMS} implies that
$$\mathrm{FPP}(G_\infty(K,f,t)) = r/4,$$
where $r$ denotes the number of generators in $\{g_{p_1},g_{p_2}\}$ fixing an end in $\partial T$. 

If $d$ is odd, the action of each $g_{p_i}$ on the first level is the product of $(d-1)/2$ transpositions by \cref{proposition: action inertia generators}. By \cref{remark: sections of generators}, each element $g_{p_i}$ is conjugate to an element $g_i$ whose section at the only fixed point on the first level is $g_q$, where $q$ is either $p_1$ and $p_2$. Proceeding in this fashion, we conclude that both $g_{p_1}$ or $g_{p_2}$ fix an end in $\partial T$ and therefore $r = 2$. 

If $d$ is even, relabeling the post-critical points if necessary, we have $f^{-1}(p_1) \subseteq C_f$ and $\set{p_1,p_2} \subseteq f^{-1}(p_2)$. Therefore, the action on the first level of $g_{p_1}$ is the product of $d/2$ transpositions and $g_{p_1}$ has no fixed ends in $\partial T$. On the other hand, $g_{p_2}$ is conjugate to an element $g$ such that $x^g = x$ and $g|_x = g_{p_2}$ for some $x \in \mathcal{L}_1$. This implies that $g_{p_2}$ is the only generator fixing an end in $\partial T$ and consequently $r = 1$.
\end{proof}

Now, we are in position to prove \cref{Theorem: main IMG intro}, namely only the polynomials with euclidean orbifolds of type $(2,2,\infty)$ yield positive fixed-point proportion:

\begin{proof}[Proof of \cref{Theorem: main IMG intro}]
    By \cref{lemma: almost exceptional set}, either $\#\Upsilon_f\cap K=2$ and $\Upsilon_f\cap K=\Sigma_f$ or $\#\Upsilon_f\le 1$. In the former, we may combine  \cref{lemma: euclidean and exceptional} and \cref{proposition: fpp of 22} to obtain that 
    $$\mathrm{FPP}(G_\infty(K,f,t))= \left \{ \begin{matrix} 
1/2 & \mbox{if $d$ is odd,} \\ 
1/4 & \mbox{if $d$ is even.}
\end{matrix}\right.$$
    
In the latter, we get $G_\infty(K^{\mathrm{sep}},f,t)$ is mixing by \cref{lemma: mixing polynomial}. Then, the result follows from \cref{Theorem: main result FPP intro}.
\end{proof}



\bibliographystyle{unsrt}

\end{document}